\documentclass[12pt,oneside,english,draft]{amsart}
\usepackage[T1]{fontenc}
\usepackage[latin9]{inputenc}
\usepackage{geometry}
\geometry{verbose,tmargin=1in,bmargin=1in,lmargin=1in,rmargin=1in}
\usepackage{mathrsfs}
\usepackage{amsthm}
\usepackage{amstext}
\usepackage{amssymb}
\usepackage{esint}

\makeatletter
\numberwithin{equation}{section}
\numberwithin{figure}{section}
\theoremstyle{plain}
\newtheorem{thm}{\protect\theoremname}[section]
  \theoremstyle{definition}
  \newtheorem{defn}[thm]{\protect\definitionname}
  \theoremstyle{plain}
  \newtheorem{prop}[thm]{\protect\propositionname}
  \theoremstyle{plain}
  \newtheorem{cor}[thm]{\protect\corollaryname}
  \theoremstyle{plain}
  \newtheorem{lem}[thm]{\protect\lemmaname}

\makeatother

\usepackage{babel}
  \providecommand{\corollaryname}{Corollary}
  \providecommand{\definitionname}{Definition}
  \providecommand{\lemmaname}{Lemma}
  \providecommand{\propositionname}{Proposition}
\providecommand{\theoremname}{Theorem}

\begin{document}

\title{Free Monotone Transport}

\author{A. Guionnet$^{\dagger}$ and D. Shlyakhtenko$^{\ddagger}$}

\thanks{$^{\dagger}$guionnet@math.mit.edu, CNRS \& \'Ecole Normale Sup\'eerieure
de Lyon, France \mbox{ and } MIT, Department of mathematics, USA. Research supported by ANR-08-BLAN-0311-01 and Simons foundation.}

\thanks{$^{\ddagger}$shlyakht@math.ucla.edu, UCLA Department of Mathematics.
Research supported by NSF grants DMS-0900776 and DMS-1161411 and DARPA HR0011-12-1-0009}
\begin{abstract}
By solving a free analog of the Monge-Ampère equation, we prove a
non-commutative analog of Brenier's monotone transport theorem: if
an $n$-tuple of self-adjoint non-commutative random variables $Z_{1},\dots,Z_{n}$
satisfies a regularity condition (its conjugate variables $\xi_{1},\dots,\xi_{n}$
should be analytic in $Z_{1},\dots,Z_{n}$ and $\xi_{j}$ should be
close to $Z_{j}$ in a certain analytic norm), then there exist invertible
non-commutative functions $F_{j}$ of an $n$-tuple of semicircular
variables $S_{1},\dots,S_{n}$, so that $Z_{j}=F_{j}(S_{1},\dots,S_{n})$.
Moreover, $F_{j}$ can be chosen to be monotone, in the sense that
$F_{j}=\mathscr{D}_{j}g$ and $g$ is a non-commutative function with
a positive definite Hessian. In particular, we can deduce that $C^{*}(Z_{1},\dots,Z_{n})\cong C^{*}(S_{1},\dots,S_{n})$
and $W^{*}(Z_{1},\dots,Z_{n})\cong L(\mathbb{F}(n))$. Thus our condition
is a useful way to recognize when an $n$-tuple of operators generate
a free group factor. We obtain as a consequence that the $q$-deformed
free group factors $\Gamma_{q}(\mathbb{R}^{n})$ are isomorphic (for
sufficiently small $q$, with bound depending on $n$) to free group
factors. We also partially prove a conjecture of Voiculescu by showing
that free Gibbs states which are small perturbations of a semicircle
law generate free group factors. Lastly, we show that entrywise monotone
transport maps for certain Gibbs measure on matrices are well-approximated
by the matricial transport maps given by free monotone transport.
\end{abstract}
\maketitle
\section{Introduction.}

\subsection{On a notion of density in free probability.}

Let $X_{1},\dots,X_{n}$ be classical random variables. Thus $X=(X_{1},\dots,X_{n})$
can be viewed as a function  %
defined on a measure space $(\Omega,\omega)$
with values in (say) $\mathbb{R}^{n}$. Of special interest is the
law of $X$, which is the measure on $\mathbb{R}^{n}$ obtained as
the push-forward $\mu=X_{{{*}}}\omega$. Very often, it is assumed that
$\mu$ is Lebesgue absolutely continuous and the density $\rho=d\mu/(\prod dx_{j})$
then plays a key role. For example, the density is involved in the
classical definition of entropy ($\int\rho\log\rho\ \prod dx_{j}$),
Fisher information ($\int\left|\frac{\rho'}{\rho}\right|^{2}\prod dx_{j}$)
and so on.

In passing to the non-commutative case, one assumes that $X_{1},\dots,X_{n}$
are self-adjoint elements of some finite von Neumann algebra $(M,\tau)$,
where $\tau:M\to\mathbb{C}$ is a normal faithful trace. In other
words, $X_{j}$ are self-adjoint operators on some Hilbert space $H$
containing %
a vector $\xi$ so that $\tau(T)=\langle T\xi,\xi\rangle$
satisfies $\tau(P(X)Q(X))=\tau(Q(X)P(X))$ for any non-commutative
polynomials $P$ and $Q$ evaluated at $X=(X_{1},\dots,X_{n})$. %

The only (partially) satisfactory extension of the notion of %
 joint
law of $X_{1},\dots,X_{n}$ to the non-commutative case uses the moment
method. More precisely, one defines the non-commutative law of $X_{1},\dots,X_{n}$
as the functional $\mu$ which assigns to a non-commutative monomial
$P$ the value $\tau(P(X))$.

Unfortunately, no satisfactory replacement notion of \emph{density}
has been obtained so far with the exception of the case $n=1$. In this case, %
spectral theory gives a suitable replacement; however, in absence
of any commutation between $X_{1},\dots,X_{n}$ there is no satisfactory
theory of ``joint spectrum'' even if $X_{j}$'s all act on a finite-dimensional
vector space. This causes a number of problems %
 in free probability
theory. For example, Voiculescu introduced two definitions of free
entropy ($\chi$ and $\chi^{*}$), but for neither of them is there
a simple formula analogous to the classical case as soon as $n\neq1$. %

\subsection{Free Gibbs states and log-concave measures.}

One hint that gives hope that a satisfactory replacement for the notion
of density can be found lies in the existence of free analogs of strictly log-concave
measures. Log-concave measures on $\mathbb{R}^{n}$ are probability
laws having particularly nice density: it has the form $\exp(-V(x_{1},\dots,x_{n}))dx_{1}\cdots dx_{n}$,
where $V$ is a strictly convex function. 

It turns out that these laws have free probability analogs: If  $V$
is close to a quadratic potential (that  is $V(X_{1},\dots,X_{n})=\frac{1}{2}\sum X_{j}^{2}+\beta W(X_{1},\dots,X_{n})$
for a fixed 
polynomial $W$ and sufficiently small $\beta$), there
exists a unique non-commutative law $\tau_{V}$ which satisfies the
``Schwinger-Dyson'' equation \cite{guionnet:icm,guionnet-edouard:combRM}
\[
\tau_{V}(P\mathscr{D}V)=\tau_{V}\otimes\tau_{V}(Tr(\mathscr{J}P)).
\]
This law $\tau_{V}$ is called the \emph{free Gibbs law with potential
$V$. }Here $\mathscr{D}$ and $\mathscr{J}$ are suitable non-commutative
replacements for the gradient and Jacobian, respectively. 

Free Gibbs laws associated to such convex non-commutative functions are very well-behaved. %
 It is worth mentioning that
they arise as limits of random matrix models associated to probability
measures with log-concave density $\exp(-NTr(V(A_{1},\dots,A_{n})))$
on the spaces of $N\times N$ self-adjoint matrices. Alternatively,
the law $\tau_{V}$ can be characterized as the minimizer of the relative
free entropy
\[
\chi_{V}(\tau_{V})=\chi(\tau_{V})-\tau_{V}(V),
\]
where $\chi(\tau_{V})$ is Voiculescu's (microstates) free entropy
\cite{dvv:entropy2,dvv:entropysurvey}. 

Free Gibbs states have many nice properties with respect to free stochastic
calculus, various free differential operators and so on (see e.g.
\cite{guionnet-edouard:combRM,guionnet-segala:secondOrder,alice-shlyakhtenko-freeDiffusions}).
Thus the fact that, without knowing what a non-commutative density %
is,  we are able to single out a class of non-commutative laws which
are similar in property to classical log-concave measures (which by
definition are measures with a nice density!) strongly suggests that
a non-commutative version of density could be found. Moreover, if
it were to be found, such log-concave measures are natural first candidates
for a detailed study.

\subsection{Monotone transportation maps as replacements for densities.}

There is an alternative way of talking about densities in the classical
case, which is a consequence of the following theorem of Brenier:
assume that $\mu$ is a measure on $\mathbb{R}^{n}$ satisfying some
technical conditions (Lebesgue absolutely continuous, finite second
moment, etc.) Let $\nu$ denote the standard %
 Gaussian measure on $\mathbb{R}^{n}$
. Then \emph{there exists a canonical monotone transport map $\phi_{\mu}:\mathbb{R}^{n}\to\mathbb{R}^{n}$
from $\nu$ to $\mu$. }Here by a \emph{transport map from $\nu$
to $\mu$ }we mean a map $\phi$ satisfying $\phi_{{{*}}}\nu=\mu$; and
$\phi$ is called \emph{monotone} if $\phi=\nabla\psi$ for some convex
function $\psi$ (roughly speaking, this means that the Jacobian of
$\phi$ is positive-definite almost everywhere). Our point is that
such a transport map carries all the information contained in the
knowledge of the density $d\mu/\prod dx_{j}$ (indeed, this density
is essentially the Jacobian of the map). A %
 sign of the fact
that $\phi_{\mu}$ is a good analytic object is that it continues
to exist even when the density does not (for example, when $\mu$
is not Lebesgue absolutely continuous, but does not give mass to ``small
sets'', cf. \cite{brennier:polarFact}).

It is thus very tempting to ask if there is any analog of Brenier's
results in the case of non-commutative probability. Here one immediately
faces a major obstacle: unlike in the classical case, there are many
more non-commutative probability spaces than classical probability
spaces. Indeed, up to isomorphism, the unit interval $[0,1]$ with
Lebesgue measure is the unique non-atomic probability space. At the
same time, there are many non-isomorphic non-commutative von Neumann
algebras. In fact, there is not even a (separable) von Neumann algebra
that contains all others \cite{ozawa:noUniversal}, thus there cannot
exist a ``master law'' so that all laws can be obtained as ``push-forwards''
of that law (as is the case classically). However, as we show in the
present paper, one may hope to obtain such a result in certain cases
(and, indeed, the Brenier map only exists under certain analytic assumptions
about the target measure, even in the classical case).

\subsection{The main results.}

In this paper, we give the first examples of existence of non-commutative
monotone transport. We thus take the first steps in the study of this
subject.

To state our results, recall that the Gaussian law $\nu$ can be characterized
by the following integration by parts formula: for any $f:\mathbb{R}^{n}\to\mathbb{R}^{n}$,
denoting by $x$ the vector $(x_{1},\dots,x_{n})$, 
\[
\int x\cdot f(x)\ d\nu(x)=\int Tr(Jf(x))d\nu(x),
\]
where $Jf$ denotes the Jacobian of $f$. In other words, $x=J^{*}I$,
where $J^{*}$ denotes the adjoint of $J$ viewed as an unbounded
operator $L^{2}(\mathbb{R}^{n},\nu)\to L^{2}(M_{n\times n},\nu)$,
and $I$ denotes the $n\times n$ identity matrix.

Recall that a free semicircular family $S=(S_{1},\dots,S_{n})$ %
 is a free-probability
analog of an iid $n$-tuple of Gaussian random variables. The semicircle
law can also be characterized by the formula of the form
\[
S=\mathscr{J}^{*}(I),
\]
where $\mathscr{J}^{*}$ is the adjoint of the map $\mathscr{J}$,
which is a suitable non-commutative replacement for the Jacobian. 

Our main theorem shows that monotone transport exists in the non-commutative
case, provided that the law of an $n$-tuple $Z=(Z_{1},\dots,Z_{n})$
is ``close'' to the semicircle law, in the sense that it ``almost
satisfies'' the equation $Z=\mathscr{J}^{*}(I)$, i.e., $Z$ is close
to $\mathscr{J}^{*}(I)$ in a certain analytic function norm $\Vert\cdot\Vert_{A}$
(see \S\ref{sub:powerSeriesAndNorm}). The conditions on $Z$ automatically
imply that the law of $Z$ is a free Gibbs law.
\begin{thm}
[Existence of monotone transport] Let $S=(S_{1},\dots,S_{n})$ be
a free semicircular family. If $Z_{1},\dots,Z_{n}\in(M,\tau)$ are
non-commutative random variables  such that there exists $F=F^*$ in the closure of $\mathcal A$ for the norm
$\|.\|_A$  so that $\mathscr{J}^{*}(I)={\mathscr{D}}F$  is so that  %
$\Vert Z-\mathscr{J}^{*}(I)\Vert_{A}$
is sufficiently small, then %
 there exist an $Y_{1},\dots,Y_{n}\in W^{*}(S_{1},\dots,S_{n})$
so that:
\begin{itemize}
\item the law of $Y=(Y_{1},\dots,Y_{n})$ is the same as the law of $Z=(Z_{1},\dots,Z_{n})$,
so that $W^{*}(Z)\cong W^{*}(Y)$;
\item $W^{*}(S)=W^{*}(Y)\ (\cong W^{*}(Z))$, so that $Y_{j}$ can be viewed
as ``non-commutative measurable functions'' of $S_{1},\dots,S_{n}$
(we in fact show that these functions can be taken to be non-commutative
power series);
\item $Y_{j}=\mathscr{D}_{j}G$ for some $G\in W^{*}(S_{1},\dots,S_{n})$
and $\mathscr{J}Y$ is positive-definite.
\end{itemize}
\end{thm}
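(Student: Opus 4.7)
The plan is to mimic Brenier's classical argument by deriving a free analog of the Monge--Amp\`ere equation characterizing when $Y=\mathscr{D}G$ pushes the semicircle law forward to the law of $Z$, and then solving this equation by a Banach fixed-point argument in the algebra $\mathcal{A}$ equipped with $\|\cdot\|_{A}$. The hypothesis $\mathscr{J}^{*}(I)=\mathscr{D}F$, combined with the Schwinger--Dyson equation, forces the law of $Z$ to be the free Gibbs law $\tau_{F}$; writing $F=\tfrac{1}{2}\sum X_{j}^{2}+W$, the assumption that $\|Z-\mathscr{J}^{*}(I)\|_{A}$ is small translates into $W$ being small in $\|\cdot\|_{A}$.

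I would seek $G$ in the form $G_{0}+g$ with $G_{0}=\tfrac{1}{2}\sum X_{j}^{2}$ and $g$ small in $\mathcal{A}$. Reading off the condition that $\mathscr{D}G$ transports the semicircle law to $\tau_{F}$ by comparing the two Schwinger--Dyson equations after the change of variables $X\mapsto\mathscr{D}G(X)$, the unknown $g$ should satisfy a nonlinear equation of the schematic form
\[
\mathscr{D}g\;+\;(\mathscr{D}W)\!\left(X+\mathscr{D}g\right)\;=\;\mathcal{N}(g),
\]
where $\mathcal{N}(g)$ is built out of $\mathscr{J}\mathscr{D}g$ together with a partial-trace-$\log$-type correction playing the role of the free divergence of the Hessian (the non-commutative analog of $\nabla\log\det D^{2}\psi$ in classical Monge--Amp\`ere). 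Granting that $(\mathcal{A},\|\cdot\|_{A})$ is a Banach algebra on which $\mathscr{D}$, $\mathscr{J}$, composition with tuples close to $X$, inversion of operators close to the identity, and the requisite partial traces act boundedly with quantitative estimates, the map implicitly defined by this free Monge--Amp\`ere equation is a contraction on a small ball around $0\in\mathcal{A}$ whenever $\|W\|_{A}$ is sufficiently small, yielding a unique solution $g\in\mathcal{A}$.

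Given $g$, I set $Y_{j}:=\mathscr{D}_{j}G=S_{j}+\mathscr{D}_{j}g\in W^{*}(S)$. By construction $Y$ satisfies the Schwinger--Dyson equation for $\tau_{F}$, so its joint law matches that of $Z$, and hence $W^{*}(Y)\cong W^{*}(Z)$. The reverse inclusion $W^{*}(S)\subset W^{*}(Y)$ follows by running the same fixed-point scheme in the opposite direction to produce an analytic inverse for $\mathscr{D}G$, or equivalently by inverting the Hessian $\mathscr{J}\mathscr{D}G=I+\mathscr{J}\mathscr{D}g$, which is close to the identity. Monotonicity $\mathscr{J}Y>0$ is then immediate, since $\|\mathscr{J}\mathscr{D}g\|<1$ once $g$ is small in $\|\cdot\|_{A}$.

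The main obstacle I anticipate is in the middle paragraph: pinning down the correct form of the free Monge--Amp\`ere equation (especially the right non-commutative substitute for $\log\det$ of the Hessian, which must be compatible with $\mathscr{J}^{*}$ acting on the cyclic derivatives) and establishing the quantitative Banach-algebra estimates on $\mathcal{A}$ needed for the fixed-point map to contract. Once those two ingredients are in place, identification of $Y$ with $Z$ in law, the two-sided $W^{*}$-isomorphism, and positivity of $\mathscr{J}Y$ follow by essentially routine manipulations of the non-commutative functional calculus.
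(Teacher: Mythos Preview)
Your outline is the paper's approach, and you have correctly located the crux. The obstacle you anticipate is real: one cannot run the contraction directly on the equation for $f=\mathscr{D}g$, because the relevant map involves $\mathscr{J}^{*}$ acting on $\mathscr{J}f/(1+\mathscr{J}f)$, and $\mathscr{J}^{*}$ is not bounded on $\mathscr{A}^{(A)}$. The paper's resolution is to observe that, once $f=\mathscr{D}g$ with $g=g^{*}$ cyclically symmetric, every term in the $f$-level equation is itself a cyclic gradient. The key algebraic identity (proved by differentiating $\tau\otimes\tau\operatorname{Tr}(\mathscr{J}f(X+tP))^{m+2}$ at $t=0$) is
\[
\frac{1}{m+2}\,\mathscr{D}\Big[(1\otimes\tau+\tau\otimes 1)\operatorname{Tr}\big((\mathscr{J}f)^{m+2}\big)\Big]
= -\mathscr{J}^{*}\big((\mathscr{J}f)^{m+2}\big)+\mathscr{J}f\#\mathscr{J}^{*}\big((\mathscr{J}f)^{m+1}\big),
\]
which, summed over $m\ge -1$, expresses the troublesome $\mathscr{J}^{*}$-terms as $\mathscr{D}$ of $(1\otimes\tau+\tau\otimes 1)\operatorname{Tr}\log(1+\mathscr{J}\mathscr{D}g)$ plus lower-order pieces. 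This lets one integrate the equation once and run the fixed point directly on $g$ rather than on $\mathscr{D}g$: modulo cyclic symmetrization and constants,
\[
\mathscr{N}g = -W(X+\mathscr{D}g)-\tfrac{1}{2}\mathscr{D}g\#\mathscr{D}g+(1\otimes\tau+\tau\otimes 1)\operatorname{Tr}\log(1+\mathscr{J}\mathscr{D}g).
\]
In this integrated form $\mathscr{J}^{*}$ has vanished, and the needed estimates reduce to elementary degree-counting in $\|\cdot\|_{A}$ (controlling $\mathscr{D}\Sigma$, $\mathscr{J}\mathscr{D}\Sigma$, and the partial traces of powers of $\mathscr{J}\mathscr{D}g$). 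The remainder of your plan---fixed point for $g$, inversion of $X\mapsto X+\mathscr{D}g$ by a second contraction argument to get $W^{*}(S)=W^{*}(Y)$, and positivity of $1+\mathscr{J}\mathscr{D}g$ from $\|\mathscr{J}\mathscr{D}g\|<1$---is exactly what the paper does.
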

Here $\mathscr{D}$ and $\mathscr{J}$ are suitable non-commutative
replacements for the gradient and Jacobian of a function; these differential
operators play key roles in free probability theory \cite{dvv:entropy5,dvv:entropysurvey,dvv:cyclomorphy,alice:StFlour,guionnet:icm,guionnet-anderson-zeitouni}).
In particular, the last condition is a kind of convexity requirement
on $G$ (since $\mathscr{J}Y=\mathscr{J}\mathscr{D}G$ is a kind of
Hessian of $G$).

By analogy with Brenier's theorem, we call $F=\mathscr{D}G$ ``free
monotone transport'' from the semicircle law to the law of $Z$. %
We can view the function $F$ as playing the role of density; the
fact that we are able to construct $F$ in the space of certain analytic
maps is a reflection of the fact that the law of $Z_{1},\dots,Z_{n}$
has a ``nice density'' relative to the free semicircle law. 

By Voiculescu's change of variables formula, one can claim that for
laws $\mu$ satisfying the assumptions of our theorem,
\[
\chi(\mu)=\tau\otimes\tau\otimes\operatorname{Tr}(\log\mathscr{J}F_{\mu})+\textrm{universal constant.}
\]
We thus see that free entropy can be expressed by a simple formula
in terms of the monotone transport map, and is concave in this map.

Our theorem yields a number of isomorphism results, since it gives
a rather general condition of when an $n$-tuple of operators generates
a free group factor. In particular, the theorem applies to free log-concave
Gibbs laws with potential of the form $\frac{1}{2}\sum X_{j}^{2}+\beta W(X_{1},\dots,X_{n})$
if $\beta$ is sufficiently small:
\begin{thm}
Let $W$ be a fixed self-adjoint %
polynomial in $n$ variables and set 

\noindent
$V_{\beta}=\frac{1}{2}\sum_{j=1}^{n}X_{j}^{2}+\beta W$.
Let $\tau_{\beta}$ be the free Gibbs state with potential $V_{\beta}$.
Then for sufficiently small $\beta$, $W^{*}(\tau_{\beta})\cong L(\mathbb{F}_{n})$.
\end{thm}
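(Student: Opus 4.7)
The strategy is to verify the hypotheses of the preceding Existence of Monotone Transport theorem for an $n$-tuple $Z = (Z_1, \ldots, Z_n)$ of non-commutative random variables whose joint law is $\tau_\beta$; the isomorphism $W^*(\tau_\beta) \cong L(\mathbb{F}_n)$ will then follow at once from its conclusion. The starting point is the Schwinger-Dyson equation
\[
\tau_\beta\bigl(P \cdot \mathscr{D}V_\beta(Z)\bigr) = \tau_\beta \otimes \tau_\beta\bigl(\operatorname{Tr}(\mathscr{J}P(Z))\bigr),
\]
valid for every non-commutative polynomial $P$. By the defining property of the conjugate variables, this equation identifies $\mathscr{J}^*(I)$ for the law $\tau_\beta$ with the polynomial $\mathscr{D}V_\beta(Z) = Z + \beta\, \mathscr{D}W(Z)$.

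Next I would take $F := V_\beta = \tfrac{1}{2}\sum_j X_j^2 + \beta W$. This is a self-adjoint non-commutative polynomial, hence belongs to $\mathcal{A}$ with finite $\|\cdot\|_A$-norm, and by construction $\mathscr{D}F = X + \beta\,\mathscr{D}W$, so that $\mathscr{J}^*(I) = \mathscr{D}F$ in the sense required by the hypothesis. Viewing the $Z_j$ as the identity non-commutative functions, the key estimate is
\[
\|Z - \mathscr{J}^*(I)\|_A \;=\; \|X - (X + \beta\,\mathscr{D}W)\|_A \;=\; \beta \, \|\mathscr{D}W\|_A.
\]
Because $W$ is a fixed polynomial, $\|\mathscr{D}W\|_A$ is a finite constant independent of $\beta$, so choosing $\beta$ small enough drives this quantity below the threshold of the main theorem. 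Applying that theorem then yields $Y_1, \ldots, Y_n \in W^*(S_1, \ldots, S_n)$ with the same joint law as $Z$ and with $W^*(Y) = W^*(S) = L(\mathbb{F}_n)$; consequently $W^*(\tau_\beta) = W^*(Z) \cong W^*(Y) = L(\mathbb{F}_n)$.

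The main delicate point is the verification that $\|\mathscr{D}W\|_A$ is genuinely finite for $W$ a polynomial and, should the norm $\|\cdot\|_A$ incorporate any implicit dependence on a priori operator bounds for the $Z_j$, that these operator norms remain controlled uniformly as $\beta$ varies in a neighborhood of $0$. The latter is essentially standard: since $V_\beta$ is strictly convex for $\beta$ small, uniform control on the moments $\tau_\beta(Z_j^{2k})$, and hence on $\|Z_j\|$, follows from the usual estimates for free Gibbs states associated with perturbations of the quadratic potential. Once this uniformity is in place, the rest of the argument is pure bookkeeping on top of the main theorem.
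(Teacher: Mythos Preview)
Your proposal is correct and follows essentially the same route as the paper: the result is stated there as an immediate corollary of the main isomorphism theorem (Theorem~\ref{thm:isomorphism}), and the only thing to check is that for small $\beta$ the perturbation $\beta W$ satisfies the smallness hypothesis $\Vert \beta W\Vert_{A+1}<C(A,A')$, which is exactly what you verify. Your concern about uniform operator-norm control on the $Z_j$ is legitimate but ultimately unnecessary in the paper's setup, since $\Vert\cdot\Vert_A$ is a purely formal norm on power series and the construction proceeds by building $Y$ inside $W^*(S_1,\dots,S_n)$ rather than by starting from a given realization of $Z$.
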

This theorem partially answers in the affirmative a conjecture of
Voiculescu \cite[p. 240]{voiculescu:conjectureAboutPotentials} (the
full conjecture involves arbitrary values of $\beta$).

We also prove:
\begin{thm}
Let $\Gamma_{q}(\mathbb{R}^{n})$ be the von Neumann algebra generated
by $n$ $q$-deformed semicircular elements of Bozejko and Speicher.
Then there are numbers $q_{0}(n)>0$ so that for all $|q|<q_{0}(n)$,
$\Gamma_{q}(\mathbb{R}^{n})\cong L(\mathbb{F}_{n})$.
\end{thm}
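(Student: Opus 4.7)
The plan is to verify the hypotheses of the main existence theorem for $Z=(X_{1}^{(q)},\dots,X_{n}^{(q)})$, the $q$-semicircular family of Bozejko and Speicher, and then invoke the isomorphism conclusion directly. Specifically, I need to exhibit a self-adjoint $F$ in the closure of $\mathcal{A}$ under $\|\cdot\|_{A}$ such that $\mathscr{J}^{*}(I)=\mathscr{D}F$, and show that $\|Z-\mathscr{J}^{*}(I)\|_{A}$ can be made arbitrarily small by taking $|q|$ small, with the smallness threshold depending on $n$.

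First, I would recall the explicit formula for the conjugate variables of the $q$-semicircular family (as computed by Shlyakhtenko): each $\xi_{j}^{(q)}$ can be written as a non-commutative power series in $X_{1}^{(q)},\dots,X_{n}^{(q)}$ whose coefficients are governed by $q$-weighted sums over pair partitions. At $q=0$ this reduces to $\xi_{j}^{(0)}=S_{j}$, so the difference $\xi_{j}^{(q)}-X_{j}^{(q)}$ is a power series whose terms all carry at least one factor of $q$. The $q$-deformed Wick calculus shows that the coefficients depend analytically on $q$, and moreover that the cyclic symmetry of the $q$-pairings gives the potentiality relation $\xi_{j}^{(q)}=\mathscr{D}_{j}F$ for a trace potential $F=\frac{1}{2}\sum_{j}X_{j}^{2}+\Phi_{q}$, where $\Phi_{q}$ is a self-adjoint power series vanishing at $q=0$.

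Second, I would bound the power series for $\xi_{j}^{(q)}-X_{j}^{(q)}$ term-by-term in the analytic norm $\|\cdot\|_{A}$ introduced in \S\ref{sub:powerSeriesAndNorm}. The key estimate combines the combinatorial growth of $q$-pairings of $2k$ points (controlled by Catalan-type numbers times $n^{k}$) with the $q^{k}$-weight that accompanies crossings, which yields a geometric majorant convergent once $|q|$ is smaller than some explicit constant $q_{0}(n)>0$ depending on the chosen radius of analyticity used in defining $\|\cdot\|_{A}$. This gives $\|Z-\mathscr{J}^{*}(I)\|_{A}=O(q)$ as $q\to 0$, and in particular makes this quantity smaller than the threshold required by the main theorem.

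Third, with these two verifications, the main theorem applies and produces an isomorphism $W^{*}(Z)\cong W^{*}(S_{1},\dots,S_{n})=L(\mathbb{F}_{n})$, giving $\Gamma_{q}(\mathbb{R}^{n})\cong L(\mathbb{F}_{n})$ for $|q|<q_{0}(n)$. The main obstacle is the second step: getting sharp enough combinatorial estimates on the $q$-Gaussian conjugate-variable series to control it uniformly in the analytic norm $\|\cdot\|_{A}$, and tracking how the permissible threshold $q_{0}$ depends on $n$. The dimension dependence is unavoidable because the number of available monomials of degree $k$ in $n$ variables grows like $n^{k}$, so the radius of convergence of the conjugate-variable series shrinks with $n$, forcing $q_{0}(n)\to 0$ as $n\to\infty$.
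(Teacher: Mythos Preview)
Your proposal is correct and follows essentially the same route as the paper: verify that the conjugate variables $\xi_j$ of the $q$-semicircular family are self-adjoint analytic power series with $\|\xi_j - X_j\|_A \to 0$ as $q\to 0$, then invoke the isomorphism corollary. The only discrepancy is attribution: the paper cites Dabrowski \cite[Theorem~34]{dabrowski:SPDE} (rather than Shlyakhtenko) for the power-series representation of $\xi_j$ together with the requisite $\|\cdot\|_A$-estimates, and obtains the potential form $\xi_j=\mathscr{D}_j V$ via Voiculescu's cyclic-gradient identity $V=\frac{1}{2}\Sigma(\sum_j X_j\xi_j+\xi_j X_j)$ rather than from any special $q$-combinatorics.
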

This settles (for small values of $q$) the 20-year old question of
the isomorphism class of such $q$-deformed von Neumann algebras.
Furthermore, since $L(\mathbb{F}_{n})$ is a factor, is non-hyperfinite,
strongly solid \cite{popa-ozawa:stronglySolid} (thus is solid and
has no Cartan subagebras), has the Haagerup property and the complete
metric approximation property, (again, for the small values of $q$)
our result can be viewed as the culmination of a number of results
about these algebras, including \cite{Speicher:q-comm,speicher:q-GaussianProcesses,nou:qNonInjective,sniady:q-RM-model,sniady:q-nonGamma,shlyakht:someEstimates,ricard:QFactoriality,shlyakht:lowerEstimates,kennedy-nica:exactQDeformed,dabrowski:SPDE,avsec:Q}.

It would be very interesting to extend our main theorem (which is
limited by its hypothesis to laws ``close'' to the semicircle law),
and to study the analytical properties of the map $F_{\mu}$ (these
would be non-commutative analogs of properties of the density associated
to a classical law $\mu$). For example, $F_{\mu}$ could be used
to at least formulate multi-variables analogs of various regularity
results in single-variable free probability. Similarly, $F_{\mu}$
could play a prominent role in the study of multi-variable random
matrix ensembles.

\subsection{Free monotone transport as a limit of classical monotone transport.}

As we mentioned before, the free Gibbs law $\tau_{V}$ can be obtained
as a limit of classical Gibbs measures in the following sense: for
any non-commutative polynomial $P$,
\[
\tau_{V}(P)=\lim_{N\to\infty}\mathbb{E}_{\mu_{V}^{(N)}}\left[\frac{1}{N}Tr(P(A_{1},\dots,A_{n}))\right]
\]
where $\mu_{V}^{(N)}$ is a measure on $n$-tuples of self-adjoint
$N\times N$ matrices given by
\[
d\mu_{V}^{(N)}=\exp(-NTrV(A_{1},\dots,A_{n}))\times\textrm{Lebesgue measure.}
\]
Let $W$ be a fixed self-adjoint  %
non-commutative polynomial and set $V=\frac{1}{2}\sum X_{j}^{2}+W$.
If we denote by $\mu^{(N)}$ the Gaussian measure, there exists a
unique monotone transport map $F^{(N)}$ which pushes forward $\mu^{(N)}$
into $\mu_{V}^{(N)}$. This map operates on matrices entrywise. On
the other hand, let $F$ be the free monotone transport map taking
the semicircle law to the free Gibbs state $\tau_{V}$. Using functional
calculus, $F$ induces for each $N$ a map of $n$-tuples of $N\times N$
matrices (a kind of ``matricial'' transport). In Theorem \ref{thrm:MatricialTransport},
we show that $F^{(N)}- F\to 0$ in $L^{2}$. %
In other words, the ``entrywise''
transport %
 is well-approximated by ``matricial'' transport
given by functional calculus on matrices.  This is somewhat parallel to Biane's 
result \cite{biane:SBTransform} which shows that in the Gaussian case, the ``entrywise'' Segal--Bargmann transform is asymptotically
``matricial'' and yields the limiting transform in free probability theory.

\subsection{The one-dimensional case.}

In the case that $n=1$, so that we are dealing with a single random
variable, non-commutativity goes away. In this section, we explain
the proof of our main result in the case $n=1$; indeed, many of the
formulae that hold in the general multi-variable case have simple
ad hoc explanations in the one-variable case. The arguments in this
section are subsumed by the arguments for a general $n$; however,
we feel that their inclusion is justified as they serve to clarify
our proof of the general case. 

We thus start with a semicircular variable $X$ %
 viewed as the
operator of multiplication by $x$ on $L^{2}([-2,2],\eta)$, where
$\eta$ is the semicircle law: $d\eta(x)=\chi_{[-2,2]}(x)\cdot\frac{1}{2\pi}\sqrt{4-x^{2}}dx$.
We write $\tau(h)=\int h(x)d\eta(x)$. Thus $\eta$ is the unique
maximizer %
of the functional
\begin{equation}\label{tyr}
\chi(\mu)=\iint\log|s-t|d\mu(s)d\mu(t)-\int\frac{1}{2}t^{2}d\mu(t).
\end{equation}
We fix a function $W$ which is analytic on the disk of radius $A$
and let $V(x)=\frac{1}{2}x^{2}+W(x)$. We now consider measure $\eta_{V}$
which is the unique maximizer of
\[
\chi_{V}(\mu)=\iint\log|s-t|d\mu(s)d\mu(t)-\int V(t)d\mu(t).
\]
If we write $\tau_{V}$ for the linear functional $\tau_{V}(h)=\int h(x)d\eta_{V}(x)$,
then $\tau_{V}$ is the free Gibbs state associated to $V$. 

Our goal is to construct a function $F$ which is analytic on a disk
of radius $A'$ for some %
 $4<A'<A$, so that $Y=F(X)$ has
law $\eta_{V}$ when $X$ has law $\eta$. %
 In other words,
\begin{equation}
\int h(F(x))d\eta(x)=\int h(y)d\eta_{V}(y),\label{eq:oneVarWhatWeNeed}
\end{equation}
or, equivalently, $F_{*}\eta=\eta_{V}$. 

By definition $\chi_V((x+\delta f)_{*}\eta_V)\le\chi_V(\eta_V)$ for all  real numbers $\delta$ %
which implies that the  unique minimizer $\eta_{V}$ satisfies the Schwinger-Dyson equation
\[
\int V'(t)f(t)d\eta_{V}(t)=\iint\frac{f(s)-f(t)}{s-t}d\eta_{V}(s)d\eta_{V}(t).
\]
If we write $\mathscr{J}f(s,t)=\frac{f(s)-f(t)}{s-t}$, $\mathscr{D}V(x)=V'(x)$
and identify $L^{\infty}([-2,2]^{2},d\eta_{V}(s)d\eta_{V}(t))$ with
$L^{\infty}([-2,2],d\eta_{V})\bar{\otimes}L^{\infty}([-2,2],d\eta_{V})$,
then the Schwinger-Dyson equation reads
\begin{equation}
\tau_{V}(f\mathscr{D}V)=\tau_{V}\otimes\tau_{V}(\mathscr{J}f).\label{eq:oneVarSD}
\end{equation}
In the case that $\tau_V$ has a connected support ( e.g., $V$ is strictly %
 convex on a sufficiently large interval or 
if $W$ is sufficiently small), the Schwinger-Dyson equation determines
$\tau_{V}$ (or, equivalently, $\eta_{V}$) uniquely, see e.g.  \cite{BG13}. 

Note that (\ref{eq:oneVarSD}) is equivalent to saying that $\mathscr{J}^{*}(1\otimes1)=\mathscr{D}V$.
It is not hard to see that $\mathscr{J}^{*}(1\otimes1)=2\int\frac{1}{x-y}d\eta_{V}(y)$
(here and for the remainder of the section, the improper integrals
are taken in the sense of principal value). Thus (\ref{eq:oneVarWhatWeNeed})
is equivalent to 
\[
2\int\frac{1}{x-y}d\eta_{V}(y)=V'(x)\quad \forall x\in \mbox{supp}(\eta_V)\,. %
\]
Replacing $x$ by $F(x)$ and $y$ by $F(y)$ and remembering that
$F_{{*}}\eta=\eta_{V}$ gives us the following equation for $F$:
\[
2\int\frac{1}{F(x)-F(y)}d\eta(y)=V'(F(x))=F(x)+W'(F(x)), \quad \forall x\in [-2,2]\, .%
\]
To solve this equation, we try to find a solution of the form $F(x)=x+f(x)$ with $f$ small if $W$ is. %
Then the equation becomes
\begin{equation}
2\int\frac{1}{x-y}\frac{1}{1+\mathscr{J}f(x,y)}d\eta(y)=x+f(x)+W'(x+f(x)),\label{eq:oneVarEquivalentEquation1}
\end{equation}
where, as before, we use the notation $\mathscr{J}f(x,y)=\frac{f(x)-f(y)}{x-y}$. 

Using the fact that $\eta$ is the semicircle law, so that $2\int\frac{1}{x-y}d\eta(x)=x$,
we find that (\ref{eq:oneVarEquivalentEquation1}) is equivalent to
\[
-2\int\frac{1}{x-y}\frac{\mathscr{J}f(x,y)}{1+\mathscr{J}f(x,y)}d\eta(y)=f(x)+W'(x+f(x)).
\]
To deal with the difference quotient $\mathscr{J}f(x,y)=\frac{f(x)-f(y)}{x-y}$ %
, we add and subtract the term $f'(x)$:
\begin{multline*}
2\int\frac{1}{x-y}(f'(x)-\mathscr{J}f(x,y))\frac{1}{1+\mathscr{J}f(x,y)}d\eta(y)\\
=f'(x)\left[2\int\frac{1}{x-y}\frac{1}{1+\mathscr{J}f(x,y)}d\eta(x)\right]+f(x)+W'(x+f(x)).
\end{multline*}
We can substitute (\ref{eq:oneVarEquivalentEquation1}) into the right
hand side, to get
\begin{align}
 & 2\int\frac{1}{x-y}(f'(x)-\mathscr{J}f(x,y))\frac{1}{1+\mathscr{J}f(x,y)}d\eta(y)\label{eq:oneVarEquation}\\
 & \qquad\qquad\qquad=f'(x)\left[x+f(x)+W'(x+f(x))\right]+f(x)+W'(x+f(x))\nonumber \\
 & \qquad\qquad\qquad=(xf)'(x)+\frac{1}{2}(f(x)^{2})'+[W(x+f(x))]'.\nonumber 
\end{align}
The resulting equation is equivalent to the previous one under the
assumption that $f'(x)\neq -1$ %
almost surely. 

Since the right-hand side is a full derivative, it is natural to try
to express the left-hand side as a full derivative, too.
Simple algebra shows that 

\[
\int\frac{1}{x-y}(f'(x)-\mathscr{J}f(x,y))\frac{1}{1+\mathscr{J}f(x,y)}d\eta(y)\\
=\partial_{x}\int\log(1+\mathscr{J}f(x,y))d\eta(y).
\] 
Substituting this into (\ref{eq:oneVarEquation}) and removing derivatives
from both sides finally gives us
\[
2\int\log(1+\mathscr{J}f(x,y))d\eta(y)=(xf)(x)+\frac{1}{2}(f(x))^{2}+W(x+f(x))
+\textrm{const.}
\]
Finally, we seek $g$ so that $f=g'$ solves the previous equation.
This gives us the following equation for $g$:
\begin{equation}
xg'(x)=-W(x+g'(x))-\frac{1}{2}(g'(x))^{2}+2\int\log\left(1+\frac{g'(x)-g'(y)}{x-y}\right)d\eta(y)+\textrm{const.}\label{eq:MongeAmpereKind}
\end{equation}
The constant can be fixed by requiring that both sides of the equation
vanish at $x=0$. Note that the operator $\mathscr{N}g=xg'$ multiplies
monomials of degree $n$ by $n$. Let $\Sigma$ be the inverse of
$\mathscr{N}$ (defined on polynomials with zero constant term), %
 $\Sigma(\hat g)(x)=\int_0^x y^{-1} \hat g(y) dy$, %
 and let
$\hat{g}=\mathscr{N}g$ so that $g=\Sigma\hat{g}$. With this notation,
our equation becomes
\[
\hat{g}(x)
=-W(x+(\Sigma\hat{g})'(x))-\frac{1}{2}((\Sigma\hat{g})(x))^{2}+2\int\log\left(1+\frac{(\Sigma\hat{g})'(x)-(\Sigma\hat{g})'(y)}{x-y}\right)d\eta(y)+\textrm{const.}
\]
We can now rewrite this equation as a fixed point equation
\[
\hat{g}(x)=\Psi_x(\hat{g}(x)). %
\]
Note that $\Psi_x(0)=W(x)$. For $W$ small, this equation can be solved
by iteration in the space of analytic functions converging on a large
enough disk. The essential fact here is that $\hat{g}\mapsto\Psi_x(\hat{g})$
is Lipschitz for a certain analytic norm for any $x$. %
 This gives us a function
$\hat{g}$, which is sufficiently small. Retracing our steps (and
noting that for sufficiently small $W$, $f$ is small, so that $|f'(x)|<1$
almost surely), we get back the desired map $F$ given by $F=x+f=G'$ if %
$G=\frac{1}{2}x^{2}+g(x)$.

It is rather straightforward that if we consider $V_{\beta}=\frac{1}{2}x^{2}+W_{\beta}(x)$
for some family $W_{\beta}$ analytically depending on $\beta$ and
sufficiently small, then our transport map $F=F_{V_{\beta}}$ also
depends on $\beta$ analytically. 

It is worth noting that for $W$ sufficiently small, the map $F(x)=x+f(x)$ %
has a positive derivative and is thus monotone. It is therefore the
unique monotone map satisfying $F_{*}\eta=\eta_{V}$.
In particular,
the map $F$ gives optimal transport between $\eta$ and $\eta_{V}$. 

It was pointed out to us by Y. Dabrowski that equation (\ref{eq:MongeAmpereKind})
can be regarded as a free analog of the classical Monge-Ampère equation.
To see this, put $V(x)=\frac{1}{2}x^{2}$ and $\tilde{V}=V+W$, and
consider the classical Monge-Ampère equation for the monotone transport
map $H$ from the Gibbs measure $d\mu=\exp(-V(x))dx$ to $d\tilde{\mu}=\exp(-\tilde{V}(x))dx$.
Let $JH$ stand for the Jacobian (derivative) of $H$. Then the Monge-Ampère
equation reads: %
\[
\det(JH(x))=\frac{\exp(-V(x))}{\exp(-\tilde{V}(H(x)))}=\exp(\tilde{V}(H(x))-V(x)).
\]
(We write $\det JH$ even though $JH$ is a $1\times1$ matrix to
make the analogy clearer). Taking logarithm of both sides and using
that $\log\det A=Tr(\log A)$ if $A\geq0$ gives
\[
Tr(\log JH)=\tilde{V}(H(x))-V(x). %
\]
This is quite reminiscent of (\ref{eq:MongeAmpereKind}), if we rewrite
it by setting $f=g'$, $F(x)=x+f(x)$ and $\mathscr{J}F=\frac{F(x)-F(y)}{x-y}=1+\mathscr{J}f$:
\[
2\int\log\left(\mathscr{J}F\right)d\eta(y)=\left\{ \frac{1}{2}(F(x))^{2}+W(F(x))\right\} -\frac{1}{2}x^{2}=\tilde{V}(F(x))-V(x). %
\]
Later in the paper, we also consider the $n$-variable version of
(\ref{eq:MongeAmpereKind}), which has the form
\[
(1\otimes\tau+\tau\otimes1)Tr\log\mathscr{J}F=\mathscr{S}\left[\left\{ \frac{1}{2}\sum F(X)_{j}^{2}+W(F(X))\right\} -\frac{1}{2}\sum X_{j}^{2}\right]
\]%
where $\mathscr{S}$ is a certain symmetrization operator.

\subsection{Organization of paper.}

To prove our theorem, we consider in section \S\ref{sec:Notation}
a certain space of analytic functions in several non-commuting variables.
This is  the %
 space in which our construction of the monotone transport
map takes place. We also discuss the various differential operators
which are the non-commutative replacements for gradients and Jacobians.
We finish the section with a kind of implicit function theorem for
non-commutative analytic functions.

The next section is devoted to the construction of a non-commutative
monotone transport map from the semicircle law $\tau$ to the law
$\tau_{V}$ satisfying the Schwinger-Dyson equation with $V=\frac{1}{2}\sum X_{j}^{2}+W$.
We assume that $W$ is a given non-commutative analytic function (which
we assume to be sufficiently small in a certain norm).

The next section, \S\ref{sec:Applications.}, is devoted to applications.
Finally, the last section collects some open questions and problems.

\subsection*{Acknowledgments. }

The authors are grateful to Yoann Dabrowski for many useful comments
and discussions. We also wish to mention that the idea of looking
for a free analog of \emph{optimal} transport (although via some duality
arguments) was considered some 10 years ago by Cédric Villani and
the authors; although the precise connection between \emph{optimal
}and \emph{monotone} transport is still missing for the moment, such
considerations have been an inspiration for the present work. We also thank the
anonymous referees for their numerous comments which helped us to greatly improve 
our article. %

\section{Notation and an Implicit Function Theorem.\label{sec:Notation}}

\subsection{Non-commutative polynomials, power series and norms $\Vert\cdot\Vert_{A}$.\label{sub:powerSeriesAndNorm}}

We will denote by $\mathscr{A}=\mathbb{C}\langle X_{1},\dots,X_{n}\rangle$
the algebra of non-commutative polynomials in $n$ variables. Let
$\mathscr{A}_{0}\subset\mathscr{A}$ be the linear span of polynomials
with zero constant term. %

Following \cite{guionnet-segala:secondOrder}, we consider on this
space the family of norms $\Vert\cdot\Vert_{A}$, $A>1$, defined
as follows. For a monomial $q$ and arbitrary $P\in\mathbb{C}\langle X_{1},\dots,X_{n}\rangle$,
let $\lambda_{q}(P)$ be the coefficient of $q$ in the decomposition
of $P$ as monomials; thus $P=\sum_{q}\lambda_{q}(P)q$. Then we set
\[
\Vert P\Vert_{A}=\sum_{q:\deg q\geq0}|\lambda_{q}(P)|A^{\deg q}.
\]

We'll denote by $\mathscr{A}^{(A)}$ (resp., $\mathscr{A}_{0}^{(A)}$)
the completion of $\mathscr{A}$ (resp., $\mathscr{A}_{0}$) with
respect to the norm $\Vert\cdot\Vert_{A}$. This is a Banach algebra,
and can be viewed as the algebra of absolutely convergent power series
with radius of convergence at least $A$.

Note that the norm $\Vert\cdot\Vert_{A}$ has the following property:
whenever $T_{1},\dots,T_{n}$ are elements of some Banach algebra
$Q$ and $\Vert T_{j}\Vert_{Q}\leq A$, there exists a contractive
map $\mathscr{A}^{(A)}\to Q$ sending $X_{j}$ to $T_{j}$.  Hence, in the following we
will see variables as elements of the set of polynomials $\mathscr{A}^{(A)}$. However,
$\mathscr{A}^{(A)}$ will still also denote  sets of maps, namely the completion of polynomials in the norm $\|\cdot\|_A$.

$\mathcal{A}^{op}$  is the opposite algebra, with multiplication defined by $ a^{op} \cdot b^{op} = (ba)^{op}$. It is equipped with the same norm $\|\cdot\|_A$.

\subsection{The operators $\mathscr{N}$, $\Sigma$, %
$\Pi$, $\mathscr{J}$, $\partial$  and $\mathscr{D}$.}\label{secdef}
Let us denote by $\mathscr{N}$ the linear operator on $\mathscr{A}=\mathbb{C}\langle X_{1},\dots,X_{n}\rangle$
that multiplies a degree $k$ monomial by $k$. Let us also denote
by 
\[
\Sigma:\mathscr{A}_{0}\to\mathscr{A}_{0}
\]
 the inverse of $\mathscr{N}$ pre-composed by the projection $\Pi$
onto $\mathscr{A}_{0}$, given by 
\[
\Pi:P\mapsto P-P(0,0,\dots,0).
\]

If $g\in\mathscr{A}$ we write $\mathscr{D}_{j}g$ for the $j$-th
cyclic derivative of $g$ \cite{dvv:cyclomorphy}. For a monomial
$q$, $\mathscr{D}_{j}q=\sum_{q=AX_{j}B}BA$. 

Note that $\mathscr{D}_{j}g=\mathscr{D}_{j}\Pi g$. We'll denote by
$\mathscr{D}g$ the cyclic gradient (viewed as a vector):
\[
\mathscr{D}g=(\mathscr{D}_{j}g)_{j=1}^{n}.
\]
If $f=(f_{1},\dots,f_{n})$ with $f_{j}\in\mathscr{A}$ then we'll
write $\mathscr{J}f\in M_{n\times n}(\mathscr{A}\otimes\mathscr{A}^{op})$
for the matrix given by
\[
\mathscr{J}f=(\partial_{j}f_{i})_{i,j=1}^{n},
\]
where $\partial_{j}$ is the $j$-th free difference quotient \cite{dvv:entropy5}
defined as the derivation from $\mathscr{A}$ to $\mathscr{A}\otimes\mathscr{A}^{op}$
satisfying $\partial_{j}X_{i}=\delta_{j=i}$, that is if $q$ is a monomial $\partial_j q=\sum_{q=AX_jB} A\otimes B$.

\subsection{Notation: $Tr$, $\mathscr{J}^{*}$ and $\#$.}

We will often assume that $\tau$ is a positive trace and also that
$1\otimes1\in\operatorname{dom}\partial_{j}^{*}$, when $\partial_{j}:L^{2}(\mathscr{A},\tau)\to L^{2}(\mathscr{A},\tau)\bar{\otimes}L^{2}(\mathscr{A},\tau)$
is viewed as a densely defined operator. Under this assumption, $\mathscr{A}\otimes\mathscr{A}^{op}$
belongs to the domain of %
$\partial_{j}^{*}$ \cite{dvv:entropy5}.  We set for $q=\sum a_i\otimes b_i \in \mathscr{A}\otimes\mathscr{A}^{op}$
and $g\in \mathscr{A}$,
$$q\# g=\sum_i a_i g b_i\,.$$ %

For $q\in M_{n\times n}(\mathscr{A}\otimes\mathscr{A}^{op})$, and
$g=(g_{j})_{j=1}^{n}\in\mathscr{A}^{n}$, we'll write
\begin{eqnarray*}
Tr(q) & = & \sum_{i}q_{ii}\in\mathscr{A}\otimes\mathscr{A}^{op},\\ %
\mathscr{J}^{*}q & = & \left(\sum_{i}\partial_{i}^{*}(q_{ji})\right)_{j=1}^{n}\in L^{2}(\mathscr{A},\tau)^{n},\\
q\# g & = & \left(\sum_{i}q_{ji}\# g_{i}\right)_{j=1}^{n}.
\end{eqnarray*}
We will also denote by $\#$ multiplication in $M_{n\times n}(\mathscr{A}\otimes\mathscr{A}^{op})$ and write $g\# h=\sum g_{i}h_{i}$ if $g,h\in\mathscr{A}^{n}$.

\subsection{Non-commutative notions of transport.}

Let $(M,\phi)$ be a von Neumann algebra with trace $\phi$, and let
$X_{1},\dots,X_{n}\in M$ be self-adjoint elements generating $M$;
thus $M$ can be regarded as a completion of the algebra $\mathscr{A}$,
and the trace $\phi$ induces on $\mathscr{A}$ a trace $\tau_{X}$
(called the non-commutative law of $X_{1},\dots,X_{n}$). We write
$M\cong W^{*}(\tau_{X})$. Let $Y_{1},\dots,Y_{n}\in N$ be self-adjoint
elements generating another von Neumann algebra $N$ with trace $\psi$;
we denote by $\tau_{Y}$ the corresponding linear functional on $\mathscr{A}$
so that $N\cong W^{*}(\tau_{Y})$. 
\begin{defn}
By \emph{transport }from $\tau_{X}$ to $\tau_{Y}$ we mean an $n$-tuple
of self-adjoint elements $\hat{Y}_{1},\dots,\hat{Y}_{n}\in M$ having
the same law as $Y_{1},\dots,Y_{n}$. We call such transport\emph{
monotone} if $\hat{Y}=(\hat{Y}_{j})_{j}$ belongs to the $L^{2}$-closure
of the set
\[
\{\mathscr{D}g:g\in\operatorname{Alg}(X_{1},\dots,X_{n})\textrm{ and }\mathscr{J}\mathscr{D}g\geq0\}.
\] (as part of the definition of monotone transport, we are making the assumption that the variables $X_1,\dots,X_n$ are algebraically free, and so $\mathscr{J}\mathscr{D}g$ is well-defined for any $g$ in the algebra these variables generate). %
\end{defn}
When $M$ is abelian, an element of $M$ is an (essentially bounded)
function of $X_{1},\dots,X_{n}$ (and $n=1$.) %
So in the case that $M$ is abelian,
our definition reduces to the statement that the random variables
$\hat{Y}_{1},\dots,\hat{Y}_{n}$ are expressed as (bounded measurable)
functions $\hat{Y}_{j}=f_{j}(X_{1},\dots,X_{n})$. Since the law of
$Y$ is the same as the law of $\hat{Y}$, the push-forward of the
measure $\tau_{X}$ via the function $f=(f_{j})_{j}$ is exactly the
measure $\tau_{Y}$. So our notion of transport coincides with the
classical one in the commutative case.

Our definition of monotone transport is analogous to the classical
case where one requires that $\hat{Y}$ belongs to the closure of
the space of gradients of convex functions. %
However, note that a
different notion of derivative is being used. Nonetheless, when $n=1$
our requirement reduces to asking that $\hat{Y}=f(X)$ for some $f:\mathbb{R}\to\mathbb{R}$
so that $f=g'$ for some $g$ and $\frac{f(x)-f(y)}{x-y}\geq0$, i.e.,
$f$ is monotone and $g$ convex. %

Note that if a transport from $\tau_{X}$ to $\tau_{Y}$ exists, then
there exists a trace-preserving embedding $N=W^{*}(Y)$ into $M=W^{*}(X)$
given by $Y_{j}\mapsto\hat{Y}_{j}$.

\subsection{The norm $\Vert\cdot\Vert_{A\otimes_{\pi}A}$.}

For $F=(F_j)_{j=1}^n\in (\mathscr{A}^{(A)})^n$, we will use the notation $\Vert F\Vert_A=\sup_j \Vert F_j \Vert_A$. 

Let us denote by $\Vert\cdot\Vert_{A\otimes_{\pi}A}$ the projective
tensor product norm on $\mathscr{A}^{(A)}\otimes(\mathscr{A}^{(A)})^{op}$. %
In other words
$$\|\sum a_i\otimes b_i\|_{A\otimes_{\pi}A}=\sup_{PI}\|PI(\sum a_i\otimes b_i)\|$$
where the supremum is taken over all maps $PI$  valued in a Banach algebra so that %
$PI(1\otimes b)$ and $PI(a\otimes 1)$ commute and have norm bounded by the norm of $b$ and $a$ respectively.
In particular, taking $PI$ to be given by the right and left multiplication respectively, we see that 
for all $q\in \mathscr A\otimes \mathscr{A}^{op}$ and $g\in \mathscr{A}$, we have
$$\|q\# g\|_A\le \Vert q\Vert_{A\otimes_{\pi}A}\|g\|_A\,.$$ %
$\Vert\cdot\Vert_{A\otimes_{\pi}A}$ also extends to $(\mathscr A\otimes \mathscr A^{op})^n$ by putting for $F=(F_1,\ldots,F_n)\in(\mathscr A\otimes \mathscr A^{op})^n$ %
$$\Vert F\Vert_{A\otimes_{\pi}A}=\max_{1\le i\le n}\Vert F_i\Vert_{A\otimes_{\pi}A}\,.$$ %
We will also denote by the same symbol the norm on $M_{n\times n}(\mathscr{A}\otimes\mathscr{A}^{op})$
(or its completion) given by identifying that space with the Banach space of (left) multiplication operators on $(\mathscr{A}\otimes\mathscr{A}^{op})^n$.  %
Explicitly, this norm is given by $$\Vert X_{ij} \Vert_{A\otimes_\pi  A} = \max_i \sum_j \Vert X_{ij} \Vert_{A\otimes_\pi A}.$$

In consequence of its definition, we see that the maps
\begin{eqnarray*}
\#:M_{n\times n}(\mathscr{A}\otimes\mathscr{A}^{op})\times M_{n\times n}(\mathscr{A}\otimes\mathscr{A}^{op}) & \to & M_{n\times n}(\mathscr{A}\otimes\mathscr{A}^{op})\\
M_{n\times n}(\mathscr{A}\otimes\mathscr{A}^{op})\times\mathscr{A}^{n} & \to & \mathscr{A}^{n}
\end{eqnarray*}
are contractive %
 for the norm $\Vert\cdot\Vert_{A\otimes_{\pi}A}$ on $M_{n\times n}(\mathscr{A}\otimes\mathscr{A}^{op})$
and the norm $\Vert\cdot\Vert_{A}$ on $\mathscr{A}^{n}$.

\subsection{Cyclically symmetric functions.}
\begin{defn}\label{defS}
We say that a polynomial $q$ in $\mathscr{A}$ is \emph{cyclically
symmetric }if for any $i_{1},\dots,i_{k}$, the coefficient of the
monomial $X_{i_{1}}\cdots X_{i_{k}}$ in $q$ is the same as the coefficient
of the monomial $X_{i_{k}}X_{i_{1}}\cdots X_{i_{k-1}}$. 
\end{defn}
Let $\mathscr{S}$ denote the operator on $\mathscr{A}_{0}$ defined
on monomials by
\[
\mathscr{S}X_{i_{1}}\cdots X_{i_{p}}=\frac{1}{p}\sum_{r=1}^{p}X_{i_{r+1}}\cdots X_{i_{p}}X_{i_{1}}\cdots X_{i_{r}}.
\]
Clearly, $\mathscr{S}$ sends $\mathscr{A}_{0}$ to the space of cyclically
symmetric operators. Note also that $\mathscr{S}g=\Sigma \mathscr{D}g\#X=\Sigma \sum_i \mathscr{D}_i g X_i$. %
Moreover, one clearly has that $\Vert\mathscr{S}g\Vert_{A}\leq\Vert g\Vert_{A}$.
Finally, note that $\mathscr{D}(\mathscr{S}g)=\mathscr{D}g$.

\subsection{Implicit function theorem.}%

We next establish a kind of implicit function theorem for elements
of the space $(\mathscr{A}^{(A)})^n$. For later purpose note that if $F\in \mathscr{A}^{(A)}$, 
$F$ is infinitely differentiable in $ \mathscr{A}^{(A')}$ for any $A'<A$.
These results are most likely folklore,
but we were not able to find a precise reference. The proofs follow
very closely the classical proof of the implicit function theorem
using a contraction mapping principle.  
We have denoted by $\mathscr{A}$ the algebra $\mathbb{C}\langle X_{1},\dots,X_{n}\rangle $
of non-commutative polynomials in $n$ variables. Let us write $\mathscr{A}*\mathscr{A}$
for the algebra of non-commutative polynomials in $2n$ variables,
$\mathbb{C}\langle X_{1},\dots,X_{n},Y_{1},\dots,Y_{n}\rangle$. Also, remember that variables and maps are all seen as %
converging power series.
\begin{thm}
[Implicit Function Theorem]\label{theoimplicit}

Assume that $F=(F_j)_{j=1}^n$ where 
$$F_j(X_{1},\dots,X_{n},Y_{1},\dots,Y_{n})\in(\mathscr{A}*\mathscr{A})^{(A)}$$ %
is a non-commutative power series in $2n$ variables, and assume that
\[
F(0,\dots,0,0,\dots,0)=0.
\]
Let $Q=\mathscr{J}_{2}F(0,\dots,0)$ (here $\mathscr{J}_{2}$ refers
to differentiation in the variables $Y_{1},\dots,Y_{n}$, with $X_{j}$'s
held fixed), and assume that $Q$ is invertible. Then there exists
an $a_0\in (0,A)$  and a finite constant $C$ so that for all $a\in (0,a_0)$, any  $U\in\mathscr{A}^{(A)}$ satisfying $\Vert U\Vert_{A}<a$,
there exists a unique $V\in(\mathscr{A}^{(A)})^n$ satisfying $\Vert V\Vert_{A}< Ca$ %
which solves the equation
\[
F(U,V)=0.
\]
Moreover, there exists a function $f\in (\mathscr{A}^{(a)})^n$  so that $V=f(U)$. %
\end{thm}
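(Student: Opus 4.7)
This is the standard contraction-mapping proof of the implicit function theorem, transferred to the non-commutative Banach algebra $\mathscr{A}^{(A)}$. Since $Q = \mathscr{J}_2 F(0,\dots,0)$ is the Jacobian evaluated at the origin, it lies in $M_{n\times n}(\mathbb{C}\otimes\mathbb{C}^{op}) \cong M_{n\times n}(\mathbb{C})$ and its invertibility is that of an ordinary complex matrix. Write $F(X,Y) = QY + R(X,Y)$ so that $R(0,0) = 0$ and $\mathscr{J}_2 R(0,0) = 0$; then $F(U,V) = 0$ is equivalent to the fixed-point equation
\[
V = T_U(V) := -Q^{-1} R(U,V).
\]

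The technical heart is a substitution estimate in the $\Vert\cdot\Vert_A$-norm: if $H \in \mathscr{A}^{(A)}$ has no constant term, then $\Vert H\Vert_a \le (a/A)\Vert H\Vert_A$ for $a \le A$ (every monomial contributing to $H$ has degree at least one, so $a^{\deg q}/A^{\deg q} \le a/A$), and the Banach-algebra property $\Vert H(W)\Vert_A \le \Vert H\Vert_a$ for $\Vert W_i\Vert_A \le a$ then yields $\Vert F(U,0)\Vert_A \le (a/A)\Vert F(\cdot,0)\Vert_A$ whenever $\Vert U\Vert_A \le a$. Applying the same reasoning entrywise to $\mathscr{J}_2 R$, which vanishes at $(0,0)$ by construction, together with the contractivity of $\#$ from $\Vert\cdot\Vert_{A\otimes_\pi A} \times \Vert\cdot\Vert_A$ to $\Vert\cdot\Vert_A$, gives the Lipschitz bound
\[
\Vert R(U,V_1) - R(U,V_2)\Vert_A \le C_0\bigl(\Vert U\Vert_A + \Vert V_1\Vert_A + \Vert V_2\Vert_A\bigr)\Vert V_1 - V_2\Vert_A,
\]
valid for sufficiently small arguments, where $C_0$ depends only on $\Vert F\Vert_A$ and $A$.

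With these estimates, set $C = 2\Vert Q^{-1}\Vert \cdot \Vert F(\cdot,0)\Vert_A / A$ and pick $a_0$ so small that $2\Vert Q^{-1}\Vert C_0 (1+C)\, a_0 \le 1/2$. For any $a < a_0$ and any $U$ with $\Vert U\Vert_A < a$, the previous bounds force $T_U$ to send the ball $B_{Ca} := \{V \in (\mathscr{A}^{(A)})^n : \Vert V\Vert_A \le Ca\}$ into itself with Lipschitz constant at most $1/2$; Banach's fixed-point theorem then supplies the unique $V \in B_{Ca}$ with $F(U,V) = 0$.

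To exhibit $V$ as $f(U)$ for a non-commutative power series $f$, I would run the same iteration symbolically: define $f^{(0)} = 0$ and $f^{(k+1)} = -Q^{-1} R(X, f^{(k)})$ with $X = (X_1,\dots,X_n)$ the formal generators of $\mathscr{A}$. Each $f^{(k)}$ lies in $(\mathscr{A}^{(A)})^n$ by induction, and the same Lipschitz estimate applied with $X$ in place of $U$ forces the increments $f^{(k+1)} - f^{(k)}$ to decay geometrically in $\Vert\cdot\Vert_a$ for every $a < a_0$. Consequently the limit $f$ exists in $(\mathscr{A}^{(a)})^n$, and substituting $U$ and invoking uniqueness of the fixed point identifies $f(U) = V$. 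The main technical obstacle is obtaining the Lipschitz estimate above in the correct projective-tensor-product norm: the vanishing of $\mathscr{J}_2 R(0,0)$ interacts with the definition of $\Vert\cdot\Vert_{A\otimes_\pi A}$ to produce the crucial factor linear in $a$, and everything downstream is then a direct transcription of the classical contraction-mapping argument.
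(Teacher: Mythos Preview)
Your proposal is correct and follows essentially the same route as the paper: rewrite $F(U,V)=0$ as a fixed-point equation $V=-Q^{-1}R(U,V)$, use that the nonlinear remainder has small Lipschitz constant in a small ball (because its $Y$-Jacobian vanishes at the origin), apply Banach's fixed-point theorem, and then rerun the iteration on the formal generators $X$ to obtain the power series $f$. The only cosmetic difference is that the paper splits off the linear-in-$X$ part of $G(X,Y)=Y-Q^{-1}\#F(X,Y)$ explicitly and works with the purely degree-$\ge 2$ remainder $H$, obtaining the Lipschitz bound by direct monomial counting rather than via $\mathscr{J}_2 R$; this yields the same estimates and the same choice of constants.
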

\begin{proof}
Put $X=(X_1,\ldots,X_n)$ and $Y=(Y_1,\ldots,Y_n)$ and denote
\[
G(X,Y)=Y-Q^{-1}\#F(X,Y).
\]
Then, since $Q$ is invertible,
\[
G(U,V)=V\iff F(U,V)=0\,.
\]
Moreover, since $F(0,0)=0$, we have 
$$G(X,Y)=R(X)+ H(X,Y)$$
with $R(X)=-Q^{-1}\# \mathscr{J}_{1}F(0,\dots,0)\# X$ %
 and $H(X,Y)=G(X,Y)- R(X)$ a non-commutative power series in $\left[(\mathscr{A}*\mathscr{A})^{(A)}\right]^n$ %
 whose decomposition 
in monomials contains only monomials of degree two or greater. Therefore, for any $X,X'$ so that $\|X\|_A, \|X'\|_A\le a$
and any $Y,Y'$ so that $\|Y\|_A\le b,\|Y'\|_A\le b$, with $a,b\le A'< A$, we have 
\begin{eqnarray}
\|H(X,Y)\|_{A}&\le& C(A') (a+b)^2\label{b1}\\
\|H(X,Y)-H(X',Y')\|_A&\le& C(A') (a+b) (\|X-X'\|_A+\|Y-Y'\|_A)\label{b2}\end{eqnarray}
with a  bounded constant $C(A')$ increasing in $A'<A$. 

Choose now $0<b_0 <A'$ 
so that $C(A') b_0^2 < (1/2) b_0$, and then choose $0<a_0 <b_0/3$ so that 
 $C(A') a_0 <1/2$, $\|Q^{-1}\# \mathscr{J}_{1}F(0,\dots,0)\|_{A\otimes_\pi A} a_0 + C(A') a_0^2 <(1/2) b_0 $. These choices imply that
$C(A')(a_0^2 + 2a_0 b_0 ) < (1/2) b_0$.   

We  assume that  $U$ is given and satisfies $\|U\|_A\le a < a_0 $ and construct a solution by putting $V_{k+1}=G(U,V_{k})$ and $V_0=0$. 

We claim that $\Vert V_{k+1}\Vert_A < b_0$ for all $k$.  Indeed, $\|V_0\|_A=0\le b_0$ and 
we have by  \eqref{b1} and our choices of $a_0, b_0$ that  if $\|V_k\|_A\le b_0$, 
$$\|V_{k+1}\|_A\le \|R(U)\|_A + \|H(U, V_{k})\|_A\le  \|Q^{-1}\# \mathscr{J}_{1}F(0,\dots,0)\|_{A\otimes_\pi A} a + C(A')(a+b_0)^2<b_0.$$%
Now by \eqref{b2}, we get for all $k\ge 1$,
$$\|V_{k+1}-V_k\|_{A}\le C(A') (a+b)\|V_{k}-V_{k-1}\|_A$$
and hence $V_k$ is a Cauchy sequence since our choice implies that $C(A')(a+b)\le C(A')(a_0+b_0)<1$. Therefore it converges whenever $a\le a_0$. The limit $V_*$ satisfies 
 $F(U,V_*)=0$.
Furthermore, $\|V_*\|_A$ is bounded above by $b_0$. This proves the stated existence.

This solution is unique: indeed, \eqref{b2}  guarantees that any two solutions $V_*$ and $V_*'$ would satisfy
$$\|V_*-V_*'\|_A\le C(A') (a+b)\|V_*-V_*'\|_A$$
implying that $V_*=V_*'$, since $C(A')(a+b)<1$.

Finally, by induction
we see that for all $k$, $V_k=f_k(U)$  where $f_k(X)=G(X, f_{k-1}(X))$ and $f_0(X)=0$. As $G\in \left[(\mathscr{A}*\mathscr{A})^{(A)}\right]^n$, $f_k$ belongs to $(\mathscr{A}^{(A)})^n$
for all $k$ so that $\|f_{k-1}\|_A\le A$. Moreover,  we see as above that 
for $a<a_0$, for all $k\ge 0$,
$$\|f_k\|_a\le b_0<A$$
so that 
\begin{eqnarray*}
\|f_k-f_{k-1}\|_{a} &=&\|G(X,f_{k-1}(X))-G(X,f_{k-2}(X))\|_{a}\\
&=& \|H(X,f_{k-1}(X))-H(X,f_{k-2}(X))\|_{a}\le C(A')(a+b)\|f_{k-1}-f_{k-2}\|_a\end{eqnarray*}
and (because $C(A')(a+b)<1$) we conclude that $f_k$ converges to a limit $f$ in $(\mathscr{A}^{(a)})^n$.  Hence, $V=f(U)$ 
with $f\in (\mathscr{A}^{(a)})^n$. 
\end{proof}

\subsection{Inverses to non-commutative power series.}%

The following corollary shows that an ``absolutely convergent non-commutative
power series'' has an ``absolutely convergent inverse'':
\begin{cor}\label{corimplicit}
Let $A'<A<B$  and consider the equation $Y=X+f(X)$ with $f\in(\mathscr{A}^{(B)})^n$ and $\|Y\|_{A'}\le A$. 
Then there exists a constant $C>0$, depending only on $A, A'$ and $B$, so that whenever $\Vert f\Vert_B<C$, then there exists
$G \in(\mathscr{A}^{(A)})^n$
so that $X=G(Y)$. \end{cor}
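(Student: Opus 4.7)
The plan is to construct $G$ directly as the unique fixed point of
\[
T(G)(Y) := Y - f(G(Y))
\]
on the Banach space $(\mathscr{A}^{(A)})^n$ of non-commutative power series in $n$ formal variables $Y=(Y_1,\dots,Y_n)$; we think of $G$ as the formal compositional inverse of $X\mapsto X+f(X)$. A fixed point satisfies $G+f(G)=Y$ as a power-series identity in $(\mathscr{A}^{(A)})^n$, so for any concrete $n$-tuple $Y$ with $\|Y\|_{A'}\le A$ the substitution homomorphism $\mathscr{A}^{(A)}\to\mathscr{A}^{(A')}$ is contractive and yields $X:=G(Y)\in\mathscr{A}^{(A')}$ solving $Y=X+f(X)$. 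The reason not to simply quote Theorem \ref{theoimplicit} is that the implicit function theorem there produces an inverse on some small disk $a_0<B$, whereas here we need radius exactly $A$; the extra room between $A$ and $B$ is what makes this possible.

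The first step is to exhibit an invariant ball. Put $M:=A+\|f\|_B$. Since $\|Y\|_A=A$ and, for $\|G\|_A\le M\le B$,
\[
\|f(G)\|_A \le \sum_q |\lambda_q(f)|\, \|G\|_A^{\deg q} \le \|f\|_B,
\]
we have $\|T(G)\|_A\le A+\|f\|_B=M$. Hence the closed ball $\{\|G\|_A\le M\}$ is $T$-invariant as soon as $M<B$, i.e.\ as soon as $\|f\|_B<B-A$.

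The second step is the contraction estimate. Writing $f=\sum_q \lambda_q q$ and using the telescoping identity
\[
q(G_1)-q(G_2) = \sum_{j=1}^{\deg q} (G_1)_{i_1}\cdots \bigl[(G_1)_{i_j}-(G_2)_{i_j}\bigr]\cdots (G_2)_{i_{\deg q}}
\]
for monomials $q=X_{i_1}\cdots X_{i_{\deg q}}$, one gets
\[
\|T(G_1)-T(G_2)\|_A \le \rho\, \|G_1-G_2\|_A, \qquad \rho \le \sum_q |\lambda_q(f)|\, \deg(q)\, M^{\deg q-1}.
\]
A Cauchy-type bound, based on $\sup_{k\ge1} k(M/B)^{k-1}\le 1/(1-M/B)^2$, then yields $\rho\le B\|f\|_B/(B-M)^2$. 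Choosing $C:=\min\{(B-A)/2,\,(B-A)^2/(4B)\}$ forces simultaneously $\|f\|_B<B-A$ and $\rho<1$, so Banach's fixed point theorem produces the required unique $G\in(\mathscr{A}^{(A)})^n$.

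I expect the only real difficulty to lie in the coupled choice of $M$ and $C$: $M$ must be strictly less than $B$ for the Cauchy-type estimates controlling $f\circ G$ and its difference quotient to be effective, yet large enough both to accommodate $\|Y\|_A=A$ and to be preserved by $T$ (which forces $M\ge A+\|f\|_B$). Matching $A+\|f\|_B<M<B$ against the contraction denominator $(B-M)^2$ is precisely what determines the size of $C$ and is the only non-routine bookkeeping in the argument.
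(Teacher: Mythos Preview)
Your approach is essentially identical to the paper's: both construct $G$ as the Banach fixed point of $G\mapsto\operatorname{id}-f\circ G$ on $(\mathscr{A}^{(A)})^n$, with the same invariant-ball and Lipschitz estimates (the paper phrases it as a Picard iteration $G_{k+1}=\operatorname{id}-f\circ G_k$, but this is the same thing). Your explicit choice of $M=A+\|f\|_B$ and $C=\min\{(B-A)/2,(B-A)^2/(4B)\}$ is a bit cleaner than the paper's implicit choice of an intermediate $B'\in(A,B)$, but the content is the same.

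There is, however, one step you skip that the paper carries out. Your fixed point gives $(\operatorname{id}+f)\circ G=\operatorname{id}$, i.e.\ $G$ is a \emph{right} compositional inverse; substituting the concrete $Y$ yields $G(Y)+f(G(Y))=Y$. But the statement asks for $X=G(Y)$ where $X$ is the \emph{given} element satisfying $X+f(X)=Y$, so you need $G$ to be a left inverse as well. Your sentence ``yields $X:=G(Y)$ solving $Y=X+f(X)$'' reads as if you are \emph{defining} $X$, not recovering the original one. The paper closes this by a one-line uniqueness argument: both $X$ and $G(Y)$ lie in a ball on which $Z\mapsto Y-f(Z)$ is a strict contraction in $\|\cdot\|_{A'}$ (using $\|X\|_{A'}=A'\le M$ and $\|G(Y)\|_{A'}\le\|G\|_A\le M$, together with your Lipschitz constant $\rho<1$), hence they coincide. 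You have all the ingredients for this; just add the line.
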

\begin{proof}  We repeat the arguments of the previous proof. We define a sequence of maps
$$G_k= \operatorname{id}-f\circ G_{k-1}$$
with $G_0=\operatorname{id}$. By definition $\|G_0\|_{A}\le A$
and for all $k$ so that $\max_{\ell\le k} \|G_\ell\|_{A}\le B'<B$, there exists a finite constant $C(B')$ depending only on $B', B$ and $\|f\|_B$ (for example, $C(B')\le \|f\|_B  \max_{k\ge 0} k (B' )^{k-1} B^{-k}$),  so that
$$\|G_{k+1}-G_k\|_{A} \le  C(B')\|G_k-G_{k-1}\|_{A}\,.$$ 
Since  $\|G_1-G_0\|_{A}\le \|f\|_{A}<C$, we may iterate to deduce that that 
$$\|G_{k+1}-G_k\|_{A} \le (C(B'))^k \|f\|_{A}\,,$$
which implies that if $C$ is small enough so that $C(B')<1$
$$\|G_{k+1}-G_0\|_{A} \le \frac{1}{1- C(B')} \|f\|_{A} \le  \frac{C}{1- C(B')}\,.$$
Hence, since $\|G_0\|_A\le A$, if $C$ is small enough so that we can find $B'\in (A,B)$ so that $$ A+ \frac{C}{1- C(B')} \le B'$$
we are guaranteed that for all $k$, $\|G_k\|_{A}\le B'$. 
We hereafter choose $C$ so that this is satisfied and conclude that $G_k$ converges in $(\mathscr{A}^{(A)})^n$. 
Its limit  $G$ satisfies  $\|G\|_A\le B'$ and 
$$G= \textrm{id}- f\circ G.$$
Moreover since  $\|Y\|_{A'}\le A$ and $\|X\|_{A'}\le \|X\|_A\le A$,
we deduce from
$$X+f(X)=Y=G(Y)+f(G(Y))$$
that

$$\|X-G(Y)\|_{A'}\le \|f(X)-f(G(Y))\|_{A'}\le  C(B')\|X-G(Y)\|_{A'}$$
which ensures that $X=G(Y)$ since we assumed $ C(B')<1$.
\end{proof}

\section{Construction of the free monotone transport map.\label{sec:ConstructionOff}}

\subsection{Outline of the proof of the main result.}

To prove existence of the transport map, we start with an $n$-tuple
of free semicircular elements $X_{1},\dots,X_{n}$. We also fix
 $V$ of the form $V=\frac{1}{2}\sum X_{j}^{2}+W$ where
$W$ is a ``non-commutative analytic function'' of $X_{1},\dots,X_{n}$
which is sufficiently small in $\Vert\cdot\Vert_{A}$.

We then seek to
find some elements $Y_{1},\dots,Y_{n}$ in the von Neumann algebra
$M$ generated by $X_{1},\dots,X_{n}$ so that $Y_{1},\dots,Y_{n}$
have law $\tau_{V}$, which is the unique log-concave free Gibbs law
satisfying the Schwinger-Dyson equation with potential $V$, i.e.
\begin{equation}
\partial_{Y_{j}}^{*}(1\otimes1)=Y_{j}+\mathscr{D}_{Y_{j}}(W(Y_{1},\dots,Y_{n}))\label{eq:mainForY}
\end{equation}
 
To construct $Y=(Y_{1},\dots,Y_{n})$ we seek $f=(f_{1},\dots,f_{n})$
so that $Y=X+f$ (i.e., $Y_{j}=X_{j}+f_{j}$). Assuming we could find
such $Y$ with $f_{j}$ analytic and of small norm, the Schwinger-Dyson
equation \eqref{eq:mainForY} for $Y$ becomes an equation on $f$, of the form
\begin{equation*}
\mathscr{J}^{*}\left(\frac{1}{1+\mathscr{J}f}\right)=X+f+(\mathscr{D}W)(X+f).
\end{equation*}
(Here $\mathscr{J}^{*}$ is the adjoint to $\mathscr{J}$). Since
for the semicircle law, $\mathscr{J}^{*}(1)=X$, this equation further
simplifies as
\[
\mathscr{J}^{*}\left(\frac{\mathscr{J}f}{1+\mathscr{J}f}\right)+f+(\mathscr{D}W)(X+f)=0.
\]
One is tempted to solve this equation by using the contraction mapping
principle; however, one faces the obstacle of bounding the map $f\mapsto\mathscr{J}^{*}(\mathscr{J}f/(1+\mathscr{J}f))$.
Fortunately, under the assumption that $f=\mathscr{D}g$ and further
assumptions on invertibility of $1+\mathscr{J}f$, one is able to
rewrite the left hand side of the equation as the cyclic gradient
of a certain expression in $g$. This gives us an equation for $g$.
It turns out that then one can use a contraction mapping argument
to prove the existence of a solution of this equation for $g$; this
is the content of Proposition \ref{prop:gExists}. We then retrace
our steps, going from $g$ to $f=\mathscr{D}g$ and then to $Y=X+f$,
proving the main result of this section, Theorem \ref{thm:fExists}.

To avoid confusion, we will use the following convention: all differential
operators ($\mathscr{D}$, $\mathscr{J}$, $\partial$) which either
have no indices, or have a numeric index, refer to differentiation
with respect to $X_{1},\dots,X_{n}$. Operators that involve differentiation
with respect to $Y_{1},\dots,Y_{n}$ will be labeled as $\mathscr{D}_{Y_{j}}$,
$\partial_{Y_{j}}$, etc.

\subsection{A change of variables formula.}
\begin{lem}\label{change}
Put $M=W^{*}(\mathscr{A},\tau)$. Assume that $Y$ is such that $\mathscr{J}Y=(\partial_{X_{j}}Y_{i})_{ij}\in M_{n}(M\bar{\otimes}M^{op})$
is bounded and invertible. %
Assume further that $1\otimes1$ belongs
to the domain of $\partial_{j}^{*}$ for all $j$.

(i) Define
\[
\hat{\partial}_{j}(Z)=\sum_{i=1}^{n}\partial_{X_{i}}(Z)\# ((\mathscr{J}Y)^{-1})_{ij} %
\]
where $\#$ denotes the multiplication
\[
(a\otimes b)\#(A\otimes B)=aA\otimes Bb.
\]
 Then $\hat{\partial}_{j}=\partial_{Y_{j}}$ and $\partial_{Y_{j}}X_{i}=((\mathscr{J}Y)^{-1})_{ij}$. %
\\

(ii) $\partial_{Y_{j}}^{*}(1\otimes1)=\sum_\ell\partial_{X_\ell}^* ((\mathscr{J}Y)^{-1})_{\ell j}^*$. %

(iii) Assume in addition that $Y_{j}=\mathscr{D}_{j}G$ for some $G$
in the completion of $\mathscr{A}$ with respect to $\Vert\cdot\Vert_{A}$.
Assume that $G=G^{*}$. Let $(a\otimes b)^{\dagger}=b^{*}\otimes a^{*}$. Then $\mathscr{(J}Y)_{ij}^{\dagger}=(\mathscr{J}Y)_{ij}$ %
and $(\mathscr{J}Y)^{-1}$ is self-adjoint in $M_{n}(M\bar{\otimes}M^{op})$. \end{lem}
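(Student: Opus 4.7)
The plan is to treat the three parts in order.

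For (i), I would define $\hat{\partial}_j$ as in the statement and show that it is a derivation on $\mathscr{A}$ into the $\mathscr{A}$-bimodule $M\bar{\otimes}M^{op}$ satisfying $\hat{\partial}_j Y_k = \delta_{jk}(1\otimes 1)$. The Leibniz rule for $\hat{\partial}_j$ will be inherited from that of $\partial_{X_i}$ via a short direct check that right multiplication in $M\bar{\otimes}M^{op}$ by any fixed element commutes with both the left and right $\mathscr{A}$-bimodule actions on $M\bar{\otimes}M^{op}$. The value $\hat{\partial}_j Y_k = \delta_{jk}(1\otimes 1)$ will read off as the $(k,j)$-entry of the matrix identity $(\mathscr{J}Y)(\mathscr{J}Y)^{-1} = I_n$ in $M_n(M\bar{\otimes}M^{op})$. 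Since these two properties characterize $\partial_{Y_j}$ uniquely, this gives $\hat{\partial}_j = \partial_{Y_j}$, and the formula $\partial_{Y_j} X_i = ((\mathscr{J}Y)^{-1})_{ij}$ follows by direct substitution $\partial_{X_k}X_i=\delta_{ki}(1\otimes 1)$.

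For (ii), I would rewrite (i) as $\partial_{Y_j} = \sum_i R_{((\mathscr{J}Y)^{-1})_{ij}}\circ \partial_{X_i}$, where $R_q\xi = \xi\#q$ denotes right multiplication in $M\bar{\otimes}M^{op}$. The tracial property of $\tau\otimes\tau^{op}$ gives $R_q^* = R_{q^*}$ as an operator on $L^2(M\bar{\otimes}M^{op})$. Taking adjoints and evaluating at $1\otimes 1$ (using $R_{q^*}(1\otimes 1) = q^*$) will yield the stated formula. A small domain check, that $((\mathscr{J}Y)^{-1})_{ij}^* \in \operatorname{dom}\partial_{X_i}^*$, will follow from boundedness of $(\mathscr{J}Y)^{-1}$ combined with the hypothesis $1\otimes 1\in\operatorname{dom}\partial_j^*$.

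For the first assertion in (iii), the plan reduces to two polynomial identities: (a) $\mathscr{D}_i(G^*) = (\mathscr{D}_i G)^*$, obtained by pairing decompositions $G = AX_iB$ with $G^* = B^*X_iA^*$ and using $(BA)^* = A^*B^*$, so that $G = G^*$ forces $\mathscr{D}_i G$ to be self-adjoint; and (b) $\partial_j(g^*) = (\partial_j g)^\dagger$ for any polynomial $g$, by a monomial computation reversing factor order. Applying (b) with $g = \mathscr{D}_i G$ then gives $(\mathscr{J}Y)_{ij}^\dagger = (\mathscr{J}Y)_{ij}$.

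The second assertion in (iii) is the step I expect to be the main obstacle. It amounts to the matrix symmetry $(\partial_i\mathscr{D}_j G)^* = \partial_j\mathscr{D}_i G$, where $*$ is the natural entrywise involution on $M\bar{\otimes}M^{op}$ (conjugating each tensor factor without swapping), which is precisely the condition that $\mathscr{J}Y$ be self-adjoint as an element of $M_n(M\bar{\otimes}M^{op})$. I would expand $\partial_j\mathscr{D}_i G$ on monomials via the Leibniz rule as the double sum
\[
\partial_j\mathscr{D}_i G = \sum_{G = P X_i Q X_j R} Q\otimes RP \;+\; \sum_{G = P X_j Q X_i R} RP\otimes Q,
\]
and match it term-by-term to the analogous expansion of $(\partial_i\mathscr{D}_j G)^*$ through the involutive bijection $PX_jQX_iR \leftrightarrow R^*X_iQ^*X_jP^*$ on decompositions induced by $G = G^*$. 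Once $\mathscr{J}Y = (\mathscr{J}Y)^*$ is established, self-adjointness of $(\mathscr{J}Y)^{-1}$ is automatic since $*$ commutes with inversion in a $*$-algebra.
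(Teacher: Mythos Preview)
Your proposal is correct, and parts (i) and (ii) match the paper's argument essentially verbatim (the paper just checks $\hat{\partial}_j Y_k = \delta_{jk}(1\otimes 1)$ and then computes $\langle\partial_{Y_j}^*(1\otimes 1), P\rangle$ by moving the adjoint across exactly as you describe).

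In part (iii) your organization differs from the paper's in a way worth noting. The paper first proves the flip identity
\[
(\mathscr{J}Y)_{ij}=\sigma\bigl[(\mathscr{J}Y)_{ji}\bigr],\qquad \sigma(a\otimes b)=b\otimes a,
\]
which holds for \emph{any} $G$ (not just $G=G^*$), via the same double-sum expansion you wrote down. Only afterwards does it specialize to $G=P+P^*$, from which it reads off simultaneously $(\mathscr{J}Y)_{ij}^*=(\mathscr{J}Y)_{ji}$ (self-adjointness of the matrix) and then $(\mathscr{J}Y)_{ij}^\dagger=\sigma\bigl((\mathscr{J}Y)_{ij}^*\bigr)=\sigma\bigl((\mathscr{J}Y)_{ji}\bigr)=(\mathscr{J}Y)_{ij}$. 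Your route instead gets the $\dagger$-invariance first from the clean identities $\mathscr{D}_i(G^*)=(\mathscr{D}_iG)^*$ and $\partial_j(g^*)=(\partial_j g)^\dagger$, and then establishes the matrix self-adjointness by the involutive bijection on decompositions. Both are valid; your argument for $\dagger$-invariance is arguably more direct, but the paper's isolation of the $\sigma$-identity pays off later (it is invoked repeatedly in the proofs of Lemmas~\ref{lemma:AfterSharp} and~\ref{lem:DvsPartialStar}), so if you are writing this up as part of the whole paper you will want to record that identity separately anyway.
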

\begin{proof}
Let $Q=\mathscr{J}Y$. (i) We verify that $\hat{\partial}_{j}Y_{k}=\sum_{i}\partial_{X_i}  %
Y_k\# (Q^{-1})_{ij}=\sum_i Q_{ki}\# (Q^{-1})_{ij}= 1_{k=j} 1\otimes 1$ %

To see (ii), we compute
\begin{eqnarray*}
\langle\partial_{Y_{j}}^{*}(1\otimes1),X_{i_{1}}\dots X_{i_{p}}\rangle & = & \langle1\otimes1,\partial_{Y_{j}}(X_{i_{1}}\dots X_{i_{p}})\rangle\\
 & = & \sum_{\ell  }\langle1\otimes1, \partial_{X_\ell} (X_{i_1}\cdots X_{i_p})\# (Q^{-1})_{\ell j}\rangle\\
 &=&\sum_\ell \langle (Q^{-1})_{\ell j}^*, \partial_{X_\ell} (X_{i_1}\cdots X_{i_p})\rangle\\
 &=&\langle \sum_\ell\partial_{X_\ell}^* (Q^{-1})_{\ell j}^*,  X_{i_1}\cdots X_{i_p}\rangle
 \end{eqnarray*}

To verify (iii), we first claim that if $Y=\mathscr{D}G$ for some
$G$ (not necessarily self-adjoint), then 
\begin{equation} \label{eq:signaVSadj}
(\mathscr{J}Y)_{ij}=\sigma[(\mathscr{J}Y)_{ji}],
\end{equation}
where $\sigma(a\otimes b)=b\otimes a$. Indeed, let $G=X_{i_{1}}\cdots X_{i_{k}}$;
in this case
\[
\partial_{i}Y_{j}=\sum_{G=AX_{i}BX_{j}C}CA\otimes B+\sum_{G=AX_{j}BX_{i}C}B\otimes CA. \]
Reversing the role of $i$ and $j$ amounts to switching the two sums,
i.e., applying $\sigma$. 

We next consider $G$ of the form
\[
G=X_{i_{1}}\cdots X_{i_{k}}+X_{i_{k}}\cdots X_{i_{1}}=P+P^{*},
\]
where $P=X_{i_{1}}\dots X_{i_{k}}.$ In that case
\[
Y_{j}=\sum_{P=AX_{j}B}BA+A^{*}B^{*}
\]
and therefore we have
\[
(\mathscr{J}Y)_{ji}=\partial_{i}Y_{j}=\sum_{P=AX_{j}B}\sum_{BA=RX_{i}S}R\otimes S+S^{*}\otimes R^{*}.%
\]
Thus, if we denote by $*$ the involution $(a\otimes b)^{*}=a^{*}\otimes b^{*}$, we deduce that
\begin{equation}\label{kl}
(\mathscr{J}Y)_{ij}^{*}=\sigma((\mathscr{J}Y)_{ij})=(\mathscr{J}Y)_{ji}
\end{equation}
proving that $\mathscr{J}Y$ is self-adjoint. Moreover we find that
\begin{equation}\label{kl2}
((\mathscr{J}Y)_{ij})^{\dagger}=\sigma((\mathscr{J}Y)_{ij}^{*})=\sigma((\mathscr{J}Y)_{ji})=(\mathscr{J}Y)_{ij}.
\end{equation}
where the last two equalities come from \eqref{kl}.%

\end{proof}

A direct consequence of Lemma \ref{change} (ii) and (iii), is the following. %
 
\begin{cor}
Assume that $g\in\mathscr{A}^{(A)}$, $1\otimes1\in\operatorname{dom}\partial_{j}^{*}$
for all $j$, and put $G=\frac{1}{2}\sum X_{j}^{2}+g$. Let $f_j =\mathscr{D}_j g$ and $Y_{j}=X_{j}+f_{j}$
so that $Y=\mathscr{D}G$. Assume that $1+\mathscr{J}f$ is invertible.
Then Equation (\ref{eq:mainForY}) is equivalent to the equation
\begin{equation}
\mathscr{J}^{*}\left(\frac{1}{1+\mathscr{J}f}\right)=X+f+(\mathscr{D}W)(X+f).\label{eq:Main-1}
\end{equation}

\end{cor}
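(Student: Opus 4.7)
The plan is to identify the two equations term-by-term after substituting $Y = X + f$. The right-hand sides match immediately: $Y_j = X_j + f_j$ by construction, and $\mathscr{D}_{Y_j}(W(Y)) = (\mathscr{D}_j W)(Y) = (\mathscr{D}_j W)(X+f)$, since the cyclic derivative $\mathscr{D}_{Y_j}$ acts on $W(Y)$ by differentiating with respect to its formal arguments before evaluation. The real content is therefore matching the left-hand sides.

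The starting observation is $\mathscr{J}Y = 1 + \mathscr{J}f$, immediate from $\partial_{X_i}(X_j + f_j) = \delta_{ij}\,1 \otimes 1 + \partial_{X_i} f_j$. Applying Lemma \ref{change}(ii) with $Q = \mathscr{J}Y$ gives
$$\partial_{Y_j}^*(1 \otimes 1) = \sum_\ell \partial_{X_\ell}^*\bigl[((1+\mathscr{J}f)^{-1})_{\ell j}\bigr]^{*},$$
while the defining formula for $\mathscr{J}^*$ from Section \ref{secdef} reads
$$\mathscr{J}^*\bigl((1+\mathscr{J}f)^{-1}\bigr)_j = \sum_i \partial_{X_i}^* \bigl[((1+\mathscr{J}f)^{-1})_{ji}\bigr].$$
So the problem reduces to the entry-wise identity $[(1+\mathscr{J}f)^{-1}]_{\ell j}^{*} = [(1+\mathscr{J}f)^{-1}]_{j \ell}$ for the involution $(a \otimes b)^{*} = a^{*} \otimes b^{*}$.

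For this, I would invoke Lemma \ref{change}(iii); here self-adjointness of $g$ (implicit in the monotone-transport setup, since one wants $Y_j = Y_j^{*}$) ensures $G = G^{*}$, and equation \eqref{kl} then gives $(\mathscr{J}Y)_{ij}^{*} = (\mathscr{J}Y)_{ji}$. To transfer this symmetry to the inverse, I would check that the map $A \mapsto A^{\sharp}$ on $M_n(\mathscr{A} \otimes \mathscr{A}^{op})$ defined by $(A^{\sharp})_{ij} := A_{ji}^{*}$ is both an involution and an antihomomorphism; the antihomomorphism property follows from $(xy)^{*} = y^{*} x^{*}$ on $\mathscr{A} \otimes \mathscr{A}^{op}$. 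Then inversion commutes with $\sharp$, so $(\mathscr{J}Y)^{\sharp} = \mathscr{J}Y$ forces $((\mathscr{J}Y)^{-1})^{\sharp} = (\mathscr{J}Y)^{-1}$, which is exactly the required entry-wise symmetry.

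The main subtlety is keeping track of the three closely related involutions $*$, $\dagger$, $\sigma$ appearing in Lemma \ref{change} and selecting the right one at each stage; once that bookkeeping is in place, the corollary is essentially a matter of assembling the pieces already established there, and the point of the exercise is to recast the Schwinger-Dyson equation in a form amenable to the contraction argument that follows.
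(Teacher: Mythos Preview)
Your argument is correct and follows the same route as the paper, which simply declares the corollary ``a direct consequence of Lemma~\ref{change} (ii) and (iii).'' You have spelled out exactly what that means: part (ii) converts $\partial_{Y_j}^*(1\otimes 1)$ into $\sum_\ell \partial_{X_\ell}^*[(\mathscr{J}Y)^{-1}]_{\ell j}^*$, and part (iii) supplies the self-adjointness $[(\mathscr{J}Y)^{-1}]_{\ell j}^* = [(\mathscr{J}Y)^{-1}]_{j\ell}$ that identifies this with the $j$-th component of $\mathscr{J}^*\bigl((1+\mathscr{J}f)^{-1}\bigr)$.

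One small remark: your antihomomorphism argument for transferring the $\sharp$-symmetry to the inverse is fine, but strictly speaking Lemma~\ref{change}(iii) already asserts that $(\mathscr{J}Y)^{-1}$ is self-adjoint in $M_n(M\bar\otimes M^{op})$, so you are re-deriving a conclusion the lemma hands you. You are also right to flag that $g=g^*$ is being used implicitly (needed for part (iii) via $G=G^*$); the paper's statement of the corollary omits this hypothesis but clearly relies on it, as the subsequent Lemma~\ref{lemma:AfterSharp} makes explicit.
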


\subsection{An equivalent form of Equation (\ref{eq:Main-1}).}
\begin{lem}
\label{lemma:AfterSharp}Let $\tau$ be the semicircle law. Assume
that the map $\xi\mapsto(\mathscr{J}f)\#\xi+\xi$ is invertible on
$(\mathscr{A}^{(A)})^{n}$, and that $f=\mathscr{D}g$ for some $g=g^*$. %
Let
\[
K=-\mathscr{J}^{*}\circ\mathscr{J}-\operatorname{id}.
\]
Then equation (\ref{eq:Main-1}) is equivalent to
\begin{equation}
K(f)=\mathscr{D}(W(X+f))+\left[(\mathscr{J}f)\#f+(\mathscr{J}f)\#\mathscr{J}^{*}\left(\frac{\mathscr{J}f}{1+\mathscr{J}f}\right)-\mathscr{J}^{*}\left(\frac{(\mathscr{J}f)^{2}}{1+\mathscr{J}f}\right)\right].\label{eq:AfterSharp}
\end{equation}
\end{lem}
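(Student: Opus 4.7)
The plan is to derive (\ref{eq:AfterSharp}) from (\ref{eq:Main-1}) by a short, reversible algebraic manipulation built on three ingredients: the characterization $\mathscr{J}^{*}(1\otimes 1)=X$ of the semicircle law; the elementary operator identities $(1+\mathscr{J}f)^{-1}=1-\mathscr{J}f(1+\mathscr{J}f)^{-1}=1-\mathscr{J}f+(\mathscr{J}f)^{2}(1+\mathscr{J}f)^{-1}$, which make sense in $M_{n\times n}(\mathscr{A}^{(A)}\otimes(\mathscr{A}^{(A)})^{op})$ by the invertibility hypothesis on $1+\mathscr{J}f$; and the chain rule for the cyclic gradient,
\[
\mathscr{D}(W(X+f))=(1+\mathscr{J}f)\#(\mathscr{D}W)(X+f),
\]
which for a monomial $W$ is an immediate consequence of the Leibniz rule for $\partial$ together with the identity $\mathscr{D}_{i}=m\circ\sigma\circ\partial_{i}$, and which extends to $W\in\mathscr{A}^{(A)}$ by continuity in $\Vert\cdot\Vert_{A}$.

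I would first apply $\mathscr{J}^{*}$ to the ``two-term'' expansion of $(1+\mathscr{J}f)^{-1}$ and use $\mathscr{J}^{*}(1\otimes 1)=X$ to rewrite (\ref{eq:Main-1}) as
\[
X-\mathscr{J}^{*}(\mathscr{J}f)+\mathscr{J}^{*}\!\left(\tfrac{(\mathscr{J}f)^{2}}{1+\mathscr{J}f}\right)=X+f+(\mathscr{D}W)(X+f).
\]
Cancelling $X$ and moving $f$ to the left while recognizing $K(f)=-\mathscr{J}^{*}\mathscr{J}f-f$ gives the intermediate identity
\[
K(f)=(\mathscr{D}W)(X+f)-\mathscr{J}^{*}\!\left(\tfrac{(\mathscr{J}f)^{2}}{1+\mathscr{J}f}\right).
\]
Applying instead the ``one-term'' expansion of $(1+\mathscr{J}f)^{-1}$ in (\ref{eq:Main-1}) gives the companion identity
\[
-\mathscr{J}^{*}\!\left(\tfrac{\mathscr{J}f}{1+\mathscr{J}f}\right)=f+(\mathscr{D}W)(X+f).
\]

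Next I would invoke the chain rule to rewrite $(\mathscr{D}W)(X+f)=\mathscr{D}(W(X+f))-(\mathscr{J}f)\#(\mathscr{D}W)(X+f)$, and hit the companion identity with $(\mathscr{J}f)\#$ to get
\[
(\mathscr{J}f)\#(\mathscr{D}W)(X+f)=-(\mathscr{J}f)\#f-(\mathscr{J}f)\#\mathscr{J}^{*}\!\left(\tfrac{\mathscr{J}f}{1+\mathscr{J}f}\right).
\]
Combining the last two displays yields
\[
(\mathscr{D}W)(X+f)=\mathscr{D}(W(X+f))+(\mathscr{J}f)\#f+(\mathscr{J}f)\#\mathscr{J}^{*}\!\left(\tfrac{\mathscr{J}f}{1+\mathscr{J}f}\right),
\]
and inserting this into the intermediate identity for $K(f)$ produces exactly (\ref{eq:AfterSharp}). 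Each step is reversible because $1+\mathscr{J}f$ is invertible, so the two equations are equivalent. The only point I expect to require real care is the cyclic-gradient chain rule: one must verify it on monomials (a straightforward but slightly fiddly bookkeeping exercise using the Leibniz rule for $\partial$ and the convention $(a\otimes b)\#c=acb$) and then extend to convergent series via the contractivity of $\#$ for $\Vert\cdot\Vert_{A\otimes_{\pi}A}$. The hypotheses $f=\mathscr{D}g$ and $g=g^{*}$ are not used in this algebraic equivalence as such; they are inherited from the set-up that produced (\ref{eq:Main-1}) and, via Lemma \ref{change}(iii), ensure the self-adjointness of $\mathscr{J}f$ needed in the later analysis.
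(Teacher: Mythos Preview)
Your algebraic route is essentially the paper's: both arguments rewrite (\ref{eq:Main-1}) via $\mathscr{J}^{*}(1\otimes1)=X$ and the resolvent expansions $\frac{1}{1+x}=1-\frac{x}{1+x}=1-x+\frac{x^{2}}{1+x}$, then absorb the ``extra'' terms using the cyclic-gradient chain rule. The paper multiplies the companion identity through by $(1+\mathscr{J}f)\#$ in one shot; you instead extract the companion identity, hit it with $(\mathscr{J}f)\#$, and substitute back. These are reorderings of the same computation, and your reversibility check via invertibility of $1+\mathscr{J}f$ is correct.

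Where you go wrong is the final paragraph. The chain rule
\[
\mathscr{D}(W(X+f))=(1+\mathscr{J}f)\#(\mathscr{D}W)(X+f)
\]
is \emph{not} a general consequence of Leibniz plus $\mathscr{D}_{i}=m\circ\sigma\circ\partial_{i}$. If you carry that computation through on a monomial $W$ with $Y=X+f$, what drops out is
\[
\mathscr{D}_{j}(W(Y))=\sum_{i}\sigma\bigl((\mathscr{J}Y)_{ij}\bigr)\#(\mathscr{D}_{i}W)(Y),
\]
which differs from $\bigl((\mathscr{J}Y)\#(\mathscr{D}W)(Y)\bigr)_{j}=\sum_{i}(\mathscr{J}Y)_{ji}\#(\mathscr{D}_{i}W)(Y)$ by the flip $\sigma$. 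The identity $\sigma((\mathscr{J}Y)_{ij})=(\mathscr{J}Y)_{ji}$ is exactly \eqref{eq:signaVSadj} in Lemma~\ref{change}(iii), and it holds precisely because $Y=\mathscr{D}G$ (equivalently $f=\mathscr{D}g$). So the hypothesis $f=\mathscr{D}g$ is not incidental baggage from the set-up: it is what makes your chain rule true in the form you stated, and the paper invokes it explicitly at this step. You should correct the closing remark accordingly; otherwise the argument stands.
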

\begin{proof}
Using the formula $\frac{1}{1+x}=1-\frac{x}{1+x}$ and the fact that
$\mathscr{J}^{*}(1\otimes1)=X$, we see that (\ref{eq:Main-1}) is
equivalent to
\[
\mathscr{J}^{*}\left(\frac{\mathscr{J}f}{1+\mathscr{J}f}\right)+f+(\mathscr{D}W)(X+f)=0. %
\]
We will now apply $(1+\mathscr{J}f)\#$ to both sides of the equation
(this map is, by assumption, invertible, so the resulting equation
is equivalent to the one in the previous line):
\begin{multline*}
\mathscr{J}^{*}\left(\frac{\mathscr{J}f}{1+\mathscr{J}f}\right)+f+(\mathscr{D}W)(X+f)+(\mathscr{J}f)\#\mathscr{J}^{*}\left(\frac{\mathscr{J}f}{1+\mathscr{J}f}\right)\\
+(\mathscr{J}f)\#f+(\mathscr{J}f)\#(\mathscr{D}W)(X+f)=0.
\end{multline*}
Using that $\frac{x}{1+x}=x-\frac{x^{2}}{1+x}$, we get:
\begin{multline*}
\mathscr{J}^{*}(\mathscr{J}f)-\mathscr{J}^{*}\left(\frac{(\mathscr{J}f)^{2}}{1+\mathscr{J}f}\right)+f+(\mathscr{D}W)(X+f)\\
+(\mathscr{J}f)\#\mathscr{J}^{*}\left(\frac{\mathscr{J}f}{1+\mathscr{J}f}\right)+(\mathscr{J}f)\#f+(\mathscr{J}f)\#(\mathscr{D}W)(X+f)=0.
\end{multline*}
Thus we have:
\begin{multline*}
K(f)=(\mathscr{D}W)(X+f)+(\mathscr{J}f)\#(\mathscr{D}W)(X+f)\\
+\left[(\mathscr{J}f)\#f+(\mathscr{J}f)\#\mathscr{J}^{*}\left(\frac{\mathscr{J}f}{1+\mathscr{J}f}\right)-\mathscr{J}^{*}\left(\frac{(\mathscr{J}f)^{2}}{1+\mathscr{J}f}\right)\right].
\end{multline*}
We now note that, because of cyclic symmetry and the fact that $f = \mathscr{D} g$ %
 with $g=g^*$, we may use \eqref{kl2}  to deduce:
\begin{equation*}
\mathscr{D}(W(X+f)) %
=(\mathscr{D}W)(X+f)+(\mathscr{J}f)\#(\mathscr{D}W)(X+f) %
\end{equation*} %
 Hence (\ref{eq:Main-1}) is equivalent
to
\[
K(f)=\mathscr{D}(W(X+f))+\left[(\mathscr{J}f)\#f+(\mathscr{J}f)\#\mathscr{J}^{*}\left(\frac{\mathscr{J}f}{1+\mathscr{J}f}\right)-\mathscr{J}^{*}\left(\frac{(\mathscr{J}f)^{2}}{1+\mathscr{J}f}\right)\right],
\]
as claimed.
\end{proof}

\subsection{Some identities involving $\mathscr{J}$ and $\mathscr{D}$.}
We start with the following identity.
\begin{lem}
\label{lem:DvsPartialStar}Put $M=W^*(\mathcal A,\tau)$. Assume that $1\otimes1$ belongs to the
domain of $\partial_{j}^{*}$ for all $j$ and that $\tau$ is a faithful trace, i.e., $\tau(P^*P)=0$ iff $P=0$. Let $g\in \mathscr{A}^{(A)}$ %
 and let
$f=\mathscr{D}g$. Then for any $m\geq-1$ we have:
\begin{multline}
\frac{1}{m+2}\mathscr{D}\left[(1\otimes\tau+\tau\otimes1)\left\{ \sum_{i=1}^{n}\left[(\mathscr{J}f)^{m+2}\right]_{ii}\right\} \right]\\
=-\mathscr{J}^{*}\left((\mathscr{J}f)^{m+2}\right)+\mathscr{J}f\#\mathscr{J}^{*}\left((\mathscr{J}f)^{m+1}\right)\,.\label{theeqr}
\end{multline} 
\end{lem}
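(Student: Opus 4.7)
My plan is to verify this purely algebraic identity by direct computation. By continuity of $\mathscr{D}$, $\mathscr{J}$, and $\mathscr{J}^*$ in the norm $\|\cdot\|_A$, it suffices to assume $g$ is a non-commutative polynomial and extend by density. The strategy is to pair both sides against an arbitrary test vector $h=(h_1,\ldots,h_n)\in\mathscr{A}^n$ under the semi-inner-product $\langle u,h\rangle=\sum_k\tau(h_ku_k)$, and show equality of pairings.

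On the LHS, I would use $\mathscr{D}_k=m\circ\sigma\circ\partial_k$ together with the trace property to rewrite $\tau(h_k\mathscr{D}_kT)=\tau((\partial_kT)\# h_k)$, where $T=(1\otimes\tau+\tau\otimes1)\operatorname{Tr}((\mathscr{J}f)^{m+2})$. Expanding $\operatorname{Tr}((\mathscr{J}f)^{m+2})$ as $\sum_{i_0,\ldots,i_{m+1}}(\mathscr{J}f)_{i_0i_1}\cdots(\mathscr{J}f)_{i_{m+1}i_0}$ and applying the Leibniz rule for $\partial_k$, the result splits into $m+2$ positional contributions according to which factor $(\mathscr{J}f)_{i_si_{s+1}}$ is differentiated. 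By cyclic invariance of the trace combined with the symmetry $(\mathscr{J}f)_{ij}=\sigma((\mathscr{J}f)_{ji})$ from \eqref{eq:signaVSadj} (which holds because $f=\mathscr{D}g$ with $g=g^*$), these $m+2$ contributions coincide, canceling the prefactor $(m+2)^{-1}$. On the RHS, I would expand $\mathscr{J}^*$ via $(\mathscr{J}^*Q)_k=\sum_i\partial_i^*(Q_{ki})$ and apply the formula
\[
\partial_i^*(a\otimes b)=a\xi_ib-(1\otimes\tau)(\partial_ia)\cdot b-a\cdot(\tau\otimes1)(\partial_ib),
\]
with $\xi_i=\partial_i^*(1\otimes1)$, entrywise on $(\mathscr{J}f)^{m+2}$. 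Writing $(\mathscr{J}f)^{m+2}=\mathscr{J}f\cdot(\mathscr{J}f)^{m+1}$, the ``$a\xi_ib$''-type contributions will cancel exactly against $\mathscr{J}f\#\mathscr{J}^*((\mathscr{J}f)^{m+1})$; what remains are the correction contributions in which $\partial_i$ lands on an internal factor of the product, and these should match the pairing of the LHS computed above.

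The main obstacle is the combinatorial bookkeeping: tracking precisely which factor of $\mathscr{J}f$ is differentiated, how the tensor positions contract against $\tau$, and how the two summands $1\otimes\tau$ and $\tau\otimes1$ on the LHS correspond to the two correction terms in the formula for $\partial_i^*$ of an elementary tensor. The symmetry $(\mathscr{J}f)_{ij}=\sigma((\mathscr{J}f)_{ji})$ is used throughout to reindex the internal contributions into the cyclic form appearing on the LHS, and the normalization $(m+2)^{-1}$ is precisely the inverse of the multiplicity introduced by the cyclic symmetry of the trace of $(\mathscr{J}f)^{m+2}$.
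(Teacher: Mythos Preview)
Your plan is viable and correctly identifies the key structural points: the cancellation of the $\xi_i$-type contributions between $-\mathscr{J}^*((\mathscr{J}f)^{m+2})$ and $\mathscr{J}f\#\mathscr{J}^*((\mathscr{J}f)^{m+1})$ does indeed hold (via the associativity $(a\otimes b)\#((c\otimes d)\#\xi_i)=((a\otimes b)(c\otimes d))\#\xi_i$), and what survives are exactly the correction terms in Voiculescu's formula in which $\partial_i$ hits the \emph{outermost} factor $\mathscr{J}f$ of the product; the symmetry \eqref{eq:signaVSadj} then lets you cycle the $m+2$ positional contributions on the LHS into a single one. One small correction: the identity $(\mathscr{J}f)_{ij}=\sigma((\mathscr{J}f)_{ji})$ requires only $f=\mathscr{D}g$, not $g=g^*$; the lemma does not assume self-adjointness of $g$.

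The paper follows a genuinely different route. It also pairs both sides against a test vector $P$, but instead of expanding $\mathscr{J}^*$ via Voiculescu's formula, it uses only the adjoint relation $\tau(\mathscr{J}^*(U)\#W)=\sum_{ij}\tau\otimes\tau(U_{ij}\#\sigma[\mathscr{J}W]_{ij})$. After using \eqref{eq:signaVSadj} and the Leibniz rule to compute $\sigma[\mathscr{J}((\mathscr{J}f)\#P)]$, the difference on the RHS collapses to $\frac{1}{m+2}\sum_h\tau\otimes\tau\otimes Tr((\mathscr{J}f)^h\#Q^P\#(\mathscr{J}f)^{m+1-h})$, where $Q^P$ is then recognized as $\frac{d}{dt}\big|_{t=0}\mathscr{J}f(X+tP)$. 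This turns the RHS into $\frac{1}{m+2}\frac{d}{dt}\big|_{t=0}\tau\otimes\tau\otimes Tr((\mathscr{J}f(X^t))^{m+2})$, and a short computation shows the LHS pairing equals the same derivative. The advantage of the paper's argument is that the perturbation trick absorbs all of the tensor bookkeeping into a single chain-rule identity, whereas your approach trades that for explicit tracking of which tensor leg each $\partial_i$-correction lands on. Your method is more elementary in that it never introduces an auxiliary parameter, but you should be prepared for the combinatorics you flag as the ``main obstacle'' to be somewhat heavier than in the paper's proof.
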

\begin{proof}
Let us take  $f=\mathscr{D}g$. We shall prove that \eqref{theeqr} is true by showing that it holds weakly. 
We therefore let   $P$ be a test function in $(\mathcal A^{(A)})^n$.  Then, with the notations of the proof of Lemma
\ref{change}, see \eqref{eq:signaVSadj},  and with $\#$  the multiplication in $(\mathcal A^{(A)})^n$, we have 

\begin{eqnarray*}
\Lambda&:=&\tau\left(\left[-\mathscr{J}^{*}\left((\mathscr{J}f)^{m+2}\right)+\mathscr{J}f\#\mathscr{J}^{*}\left((\mathscr{J}f)^{m+1}\right)\right]\#P\right)\\
&=&-\tau\left(\mathscr{J}^{*}\left((\mathscr{J}f)^{m+2}\right)\#P\right)+\tau\left(\sum_{ij}(\mathscr{J}f)_{ij}\#\left(\mathscr{J}^{*}\left((\mathscr{J}f)^{m+1}\right)\right)_{j}\#P_{i}\right)\\
&=&-\tau\left(\mathscr{J}^{*}\left((\mathscr{J}f)^{m+2}\right)\#P\right)+\tau\left(\sum_{ij}\left(\mathscr{J}^{*}\left((\mathscr{J}f)^{m+1}\right)\right)_{j}\#\sigma(\mathscr{J}f)_{ij}\#P_{i}\right)
\end{eqnarray*}
where we just used that $\tau$ is tracial.
Since $f=\mathscr{D}g$, it follows that $\sigma (\mathscr{J}f)_{ij}=(\mathscr{J}f)_{ji}$ by  \eqref{eq:signaVSadj}.
Therefore we deduce that
\begin{eqnarray*}
\Lambda
&=&-\tau\left(\mathscr{J}^{*}\left((\mathscr{J}f)^{m+2}\right)\#P\right)+\tau\left(\sum_{ij}\left(\mathscr{J}^{*}\left((\mathscr{J}f)^{m+1}\right)\right)_{j}\#(\mathscr{J}f)_{ji}\#P_{i}\right)\\
&=&
-\sum_{ij} \tau\otimes\tau
\left((\mathscr{J}f)^{m+2}_{ij} \#\sigma \left[\mathscr{J}P\right]_{ij} \right)
+\sum_{ij} \tau\otimes\tau \left(\left(\mathscr{J}f\right)^{m+1}_{ij} \#
\sigma \left[\mathscr{J}\left((\mathscr{J}f)\#P\right)\right]_{ij} \right)
\end{eqnarray*}
where in the last equality  we used the definition of $\mathscr{J}^{*}$: $$
\tau\left( \mathscr{J}^*(U) \# W\right) = \sum_{ij} \tau\otimes\tau \left(   U_{ij} \# \sigma \left[ \mathscr{J} W\right]_{ij} \right).$$ 

Let us consider the term $\sigma \left[\mathscr{J}\left((\mathscr{J}f)\#P\right)\right]$.  
Using the Leibniz rule, we obtain
\begin{eqnarray*}
\left[\mathscr{J}\left((\mathscr{J}f)\#P\right)\right]_{ij}&=& \partial_j \sum_k (\partial_k f_i)\# P_k \\
&=&   \sum_k (\partial_k f_i) \# \partial_j P_k + \sum_k ( (\partial_j\otimes 1)(\partial_k f_i)) \#_2 P_k + \sum_k ((1\otimes \partial_j)(\partial_k f_i))\#_1 P_k
\end{eqnarray*}
where we use the notation $(a\otimes b\otimes c)\#_{1}\xi=a\xi b\otimes c$,
$(a\otimes b\otimes c)\#_{2}\xi=a\otimes b\xi c$. 

Thus 
\begin{eqnarray*} 
 \sigma \left[\mathscr{J}\left((\mathscr{J}f)\#P\right)\right]_{ij} &=&  
 \sum_k \sigma ((\partial_k f_i) \# \partial_j P_k) 
 \\ &&  + \sum_k \sigma[ ( (\partial_j\otimes 1)(\partial_k f_i)) \#_2 P_k] + \sum_k \sigma[((1\otimes \partial_j)(\partial_k f_i))\#_1 P_k]
 \\ &=& \sum_k  \sigma ( \mathscr{J}P)_{kj}  \# \sigma (\mathscr{J} f)_{ik}  
 \\ && + \sum_k \eta[ ( (\partial_j\otimes 1)(\partial_k f_i))] \#_1 P_k + \sum_k \eta[((1\otimes \partial_j)(\partial_k f_i))]\#_2 P_k
 \end{eqnarray*}
 where $\eta[a\otimes b \otimes c] =b\otimes c\otimes a.$
Since $f=\mathscr{D}g$, we know that $\sigma (\mathscr{J}f)_{ik}=(\mathscr{J}f)_{ki}$, so that
\begin{eqnarray*}  
 \sigma \left[\mathscr{J}\left((\mathscr{J}f)\#P\right)\right]_{ij} &=& \sum_k \sigma ( \mathscr{J}P)_{kj}  \#  (\mathscr{J} f)_{ki}  
 \\ && + \sum_k \eta[ ( (\partial_j\otimes 1)(\partial_k f_i))] \#_1 P_k + \sum_k \eta[((1\otimes \partial_j)(\partial_k f_i))]\#_2 P_k
 \end{eqnarray*}

We now apply the identity 
$$\eta\left[ (\partial_j \otimes 1)\partial_k \mathscr{D}_i g \right]=  (\partial_k \otimes 1) \partial_i \mathscr{D}_j g 
= (1\otimes \partial_i) \partial_k \mathscr{D}_j g $$ (and a similar one involving $\eta[((1\otimes \partial_j)(\partial_k f_i))]$) to deduce that
\begin{eqnarray*} 
 \sigma \left[\mathscr{J}\left((\mathscr{J}f)\#P\right)\right]_{ij} &=& \sum_k \sigma ( \mathscr{J}P)_{kj}  \# \sigma (\mathscr{J} f)_{ik}  
 \\ && + \sum_k  ( (1\otimes \partial_i)(\partial_k f_j))\#_1 P_k + \sum_k ((\partial_i\otimes 1)(\partial_k f_j))\#_2 P_k
\\ &=&  \sigma ( \mathscr{J}P)_{kj}  \# \sigma (\mathscr{J} f)_{ik} + Q^P_{ji}
\end{eqnarray*}
where
\[
Q_{ji}^P=\sum_{k}(1\otimes\partial_{i})\partial_{k}f_{j}\#_{1}P_{k}+\sum_{k}(\partial_{i}\otimes1)\partial_{k}f_{j}\#_{2}P_{k}.
\]

Thus if we set $Q^P=((Q^P)_{ij})_{ij}$ and $R=(R_{ij})$ with $R_{ij} = \sigma(\mathscr{J}P)_{ji}$, then
\begin{eqnarray}
\Lambda
&=&-\tau\otimes\tau\otimes Tr\left((\mathscr{J}f)^{m+2}\#R\right)+\tau\otimes\tau\otimes Tr\left(\left(\mathscr{J}f\right)^{m+1}\#
 R \#\left(\mathscr{J}f\right) \right)\nonumber\\
&&\qquad\qquad\qquad+\tau\otimes\tau\otimes Tr\left(\left(\mathscr{J}f\right)^{m+1}\#Q^P\right)\nonumber\\
&=&\tau\otimes\tau\otimes Tr\left(\left(\mathscr{J}f\right)^{m+1}\#Q^P\right)\nonumber\\
&=&\frac{1}{m+2}\sum_{h=0}^{m+1}\tau\otimes\tau\otimes Tr\left(\left(\mathscr{J}f\right)^{h}\#Q^P\#\left(\mathscr{J}f\right)^{m+1-h}\right).\label{eq:diffOfJs}
\end{eqnarray}
We therefore need to prove that
\begin{eqnarray}
&&\sum_{h=0}^{m+1}\tau\otimes\tau\otimes Tr\left(\left(\mathscr{J}f\right)^{h}\#Q^P\#\left(\mathscr{J}f\right)^{m+1-h}\right)\qquad\label{theeqs}\\
&&=\tau\left(P\# \mathscr{D}\left[(1\otimes\tau+\tau\otimes1)\left\{ Tr (\mathscr{J}f)^{m+2}\right\} \right]\right)\nonumber\end{eqnarray}
holds for all test function $P$.  We prove this equality by showing that both sides are equal to  the derivative of the same quantity. 
Let $X_{j}^{t}=X_{j}+tP_{j}$, and consider
\begin{eqnarray*} %%DS: changed #_1 to #_2 etc to match what we want to prove!
\frac{d}{dt}\Big|_{t=0}(\mathscr{J}f(X^{t})_{ij}) & = & \frac{d}{dt}\Big|_{t=0}\partial_{j}f_{i}(X^{t})=\sum_{k}(1\otimes\partial_{k})\partial_{j}f_{i}\#_{2}P_{k}+\sum_{k}(\partial_{k}\otimes1)\partial_{j}f_{i}\#_{1}P_{k}\\
 & &\qquad = \sum_{k}(1\otimes\partial_{j})\partial_{k}f_{i}\#_{1}P_{k}+\sum_{k}(\partial_{j}\otimes1)\partial_{k}f_{i}\#_{2}P_{k}
\end{eqnarray*}
where we have used the identity $(\partial_{j}\otimes1)\partial_{k}=(1\otimes\partial_{k})\partial_{j}$.
It follows that
\[
Q_{ij}^P=\frac{d}{dt}\Big|_{t=0}\left(\left[\mathscr{J}f(X^{t})\right]_{ij}\right).
\]
Thus, we deduce 
\[
\sum_{t=0}^{m+1}\tau\otimes\tau\otimes Tr\left(\left(\mathscr{J}f\right)^{h}\#Q^P\#\left(\mathscr{J}f\right)^{m+1-h}\right)=\frac{d}{dt}\Big|_{t=0}\tau\otimes\tau\otimes Tr\left(\left(\mathscr{J}f(X^{t})\right)^{m+2}\right).
\]

On the other hand, if $R\in M_{n\times n}(\mathscr{A}\otimes\mathscr{A}^{op})$
is arbitrary, then we claim that 
\begin{equation}
\frac{d}{dt}\Big|_{t=0}\tau\otimes\tau\otimes Tr\left(R(X^{t})\right)=\sum_{i}\tau(\mathscr{D}(1\otimes\tau+\tau\otimes1)(R_{ii})\#P).\label{eq:tauOfDtauotimes1}
\end{equation}
Indeed, this is sufficient to check (\ref{eq:tauOfDtauotimes1}) for
$R$ a matrix with a single nonzero component $A\otimes B$ in the
$i,j$-th position. For this choice of $R$,  we have 
\begin{eqnarray*}
\frac{d}{dt}\Big|_{t=0}\tau\otimes\tau\otimes Tr(R(X^{t})) & = & \frac{d}{dt}\Big|_{t=0}\delta_{i=j}\tau(A(X^{t}))\tau(B(X^{t}))\\
 & = & \delta_{i=j}\left(\frac{d}{dt}\Big|_{t=0}\tau(A(X^{t}))\tau(B)\right)+\left(\frac{d}{dt}\Big|_{t=0}\tau(A)\tau(B(X^{t}))\right)\\
 & = & \delta_{i=j}\left(\tau(\mathscr{D}A\#P\right)\tau(B)+\tau(A)\tau(\mathscr{D}B\#P))\\
 & = & \sum_{i}\tau\left(\mathscr{D}((1\otimes\tau+\tau\otimes1)(R_{ii}))\#P\right),
\end{eqnarray*}
as claimed.
We now combine (\ref{eq:diffOfJs}) and (\ref{eq:tauOfDtauotimes1}) 
(with $R=(\mathscr{J}f)^{m+2}$) to conclude that \eqref{theeqs} holds. Since this identity holds for any $P$, the Lemma follows.
\end{proof}
\begin{lem}
\label{lemma:QvsJ}Assume that $f=\mathscr{D}g$, $g=g^{*}\in\mathscr{A}^{(A)}$,
 assume that $1\otimes1\in\operatorname{dom}\partial_{j}^{*}$
for all $j$ and that $\tau$ is a faithful trace, i.e., $\tau(P^*P)=0$ iff $P=0$. Assume that $\Vert \mathscr{J}f\Vert_{A\otimes_\pi A} <1$ %
and let
\[
Q(g)=\left[(1\otimes\tau+\tau\otimes1)\left\{ \sum_{i=1}^{n}\left(\mathscr{J}f\right)_{ii}-\left(\log(1+\mathscr{J}f)\right)_{ii}\right\} \right].
\] Then
\begin{eqnarray*}
\mathscr{D}Q & = & \mathscr{J}f\#\mathscr{J}^{*}\left(\frac{\mathscr{J}f}{1+\mathscr{J}f}\right)-\mathscr{J}^{*}\left(\frac{(\mathscr{J}f)^{2}}{1+\mathscr{J}f}\right).
\end{eqnarray*}
\end{lem}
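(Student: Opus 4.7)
The plan is to expand $Q(g)$ as a power series in $\mathscr{J}f$ and then apply Lemma~\ref{lem:DvsPartialStar} termwise. Since $\|\mathscr{J}f\|_{A\otimes_\pi A} < 1$, the scalar identity
\[
x - \log(1+x) = \sum_{m\geq 2}\frac{(-1)^m}{m}\, x^m
\]
yields an absolutely convergent expansion for $\mathscr{J}f - \log(1+\mathscr{J}f)$ in $M_{n\times n}(\mathscr{A}^{(A)}\otimes(\mathscr{A}^{(A)})^{op})$. Applying $(1\otimes\tau+\tau\otimes 1)\sum_i(\cdot)_{ii}$ (a bounded operation in the appropriate norms) I would write
\[
Q(g) = \sum_{m\geq 2}\frac{(-1)^m}{m}\,(1\otimes\tau+\tau\otimes 1)\Bigl\{\sum_{i=1}^n \bigl((\mathscr{J}f)^m\bigr)_{ii}\Bigr\}.
\]

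Next I would apply $\mathscr{D}$ termwise. By Lemma~\ref{lem:DvsPartialStar} (with the index shift $m \mapsto k-2$, $k\geq 2$), each summand satisfies
\[
\frac{1}{k}\,\mathscr{D}\Bigl[(1\otimes\tau+\tau\otimes 1)\Bigl\{\sum_{i=1}^n \bigl((\mathscr{J}f)^k\bigr)_{ii}\Bigr\}\Bigr] = -\mathscr{J}^*\bigl((\mathscr{J}f)^k\bigr) + \mathscr{J}f \# \mathscr{J}^*\bigl((\mathscr{J}f)^{k-1}\bigr).
\]
So after cancelling $1/k$ with the factor $k$ provided by Lemma~\ref{lem:DvsPartialStar},
\[
\mathscr{D}Q = \sum_{k\geq 2}(-1)^k\Bigl[-\mathscr{J}^*\bigl((\mathscr{J}f)^k\bigr) + \mathscr{J}f \# \mathscr{J}^*\bigl((\mathscr{J}f)^{k-1}\bigr)\Bigr].
\]

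Finally I would resum both series using the geometric identities $\sum_{k\geq 2}(-1)^k x^k = x^2/(1+x)$ and $\sum_{k\geq 2}(-1)^k x^{k-1} = x/(1+x)$, which converge by the hypothesis $\|\mathscr{J}f\|_{A\otimes_\pi A}<1$, together with the continuity of $\mathscr{J}^*$ and of $\mathscr{J}f\,\#\,(\cdot)$ on the relevant spaces. This gives
\[
\mathscr{D}Q = -\mathscr{J}^*\!\left(\frac{(\mathscr{J}f)^2}{1+\mathscr{J}f}\right) + \mathscr{J}f\,\#\,\mathscr{J}^*\!\left(\frac{\mathscr{J}f}{1+\mathscr{J}f}\right),
\]
which is the claimed identity.

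The main obstacle is justifying the termwise application of $\mathscr{D}$ and $\mathscr{J}^*$ and the interchange with infinite sums. The hypothesis $\|\mathscr{J}f\|_{A\otimes_\pi A}<1$ controls the matrix geometric series in the projective norm, and because $\mathscr{D}$ and multiplication by $\mathscr{J}f$ via $\#$ are bounded on $\mathscr{A}^{(A)}$, the interchange is legitimate on polynomial partial sums and extends by density. The use of Lemma~\ref{lem:DvsPartialStar} further requires $f=\mathscr{D}g$ with $g$ self-adjoint (so that $\mathscr{J}f$ has the symmetry needed in that lemma), which is exactly what is assumed.
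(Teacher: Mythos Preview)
Your proposal is correct and follows precisely the approach indicated in the paper, which simply says to use Lemma~\ref{lem:DvsPartialStar} and compare the converging Taylor expansions of both sides term by term. You have spelled out this comparison in full detail; the only minor quibble is the claim that $\mathscr{D}$ is bounded on $\mathscr{A}^{(A)}$ (it is not, as it multiplies by the degree), but the termwise identity still holds and the resummed right-hand side converges under the hypothesis $\Vert\mathscr{J}f\Vert_{A\otimes_\pi A}<1$, which is all that is needed.
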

\begin{proof}
We use Lemma \ref{lem:DvsPartialStar} and compare the (converging) %
Taylor series
expansions of both sides term by term.\end{proof}
\begin{lem}\label{lem:KforVarm}
\label{lem:KasD}Assume that $\tau$ is the semicircle law. Let 
\[
K(f)=-\mathscr{J}^{*}(\mathscr{J}f)-f.
\]
Assume that $f=\mathscr{D}g$, and that $g\in\mathscr{A}$ is cyclically
symmetric. Then
\begin{eqnarray*}
K(f) & = & \mathscr{D}\left\{ (1\otimes\tau+\tau\otimes1)\left(\sum_{i}\left[\mathscr{J}f\right]_{ii}\right)-f\#X\right\} \\
 & = & \mathscr{D}\left\{ (1\otimes\tau+\tau\otimes1)\left(Tr\left[\mathscr{J}\mathscr{D}g\right]\right)-\mathscr{N}g\right\} .
\end{eqnarray*}
\end{lem}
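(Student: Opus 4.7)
The plan is to obtain the identity by specializing Lemma \ref{lem:DvsPartialStar} at $m=-1$ and combining it with the derivative formula $\mathscr{D}(f\#X)=f+\mathscr{J}f\#X$, valid whenever $f=\mathscr{D}g$. Setting $m=-1$ in Lemma \ref{lem:DvsPartialStar} (applied to $f=\mathscr{D}g$) gives
\begin{equation*}
\mathscr{D}\left[(1\otimes\tau+\tau\otimes 1)\left(\sum_{i=1}^{n}[\mathscr{J}f]_{ii}\right)\right]=-\mathscr{J}^{*}(\mathscr{J}f)+\mathscr{J}f\#\mathscr{J}^{*}(I).
\end{equation*}
Since $\tau$ is the semicircle law, Voiculescu's identity $\partial_{j}^{*}(1\otimes 1)=X_{j}$ yields $\mathscr{J}^{*}(I)=X$, so the right-hand side simplifies to $-\mathscr{J}^{*}(\mathscr{J}f)+\mathscr{J}f\#X$.

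I would then verify that $\mathscr{D}(f\#X)=f+\mathscr{J}f\#X$ when $f=\mathscr{D}g$. Expanding $f\#X=\sum_{j}f_{j}X_{j}$, applying the Leibniz rule for $\partial_{i}$ to each $f_{j}X_{j}$ and then using $\mathscr{D}_{i}=m\circ\sigma\circ\partial_{i}$, a short computation gives
\begin{equation*}
\mathscr{D}_{i}(f_{j}X_{j})=\sigma(\partial_{i}f_{j})\#X_{j}+\delta_{ij}f_{j}.
\end{equation*}
Summing over $j$ and invoking the $\sigma$-symmetry \eqref{eq:signaVSadj}, which for $f=\mathscr{D}g$ reads $\sigma(\partial_{i}f_{j})=\partial_{j}f_{i}$, produces $\mathscr{D}_{i}(f\#X)=f_{i}+(\mathscr{J}f\#X)_{i}$, as claimed. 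Subtracting this from the previous display gives
\begin{equation*}
K(f)=-\mathscr{J}^{*}(\mathscr{J}f)-f=\mathscr{D}\left[(1\otimes\tau+\tau\otimes 1)\left(\sum_{i=1}^{n}[\mathscr{J}f]_{ii}\right)-f\#X\right],
\end{equation*}
which is the first asserted equality.

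For the second equality, cyclic symmetry of $g$ means $\mathscr{S}g=g$, and the relation $\mathscr{S}g=\Sigma(\mathscr{D}g\#X)$ recalled in \S\ref{secdef} then gives $\mathscr{N}g=\mathscr{D}g\#X=f\#X$, converting the first form into the second. The only delicate point is the Leibniz-plus-$\sigma$ computation yielding $\mathscr{D}(f\#X)=f+\mathscr{J}f\#X$: one has to carefully apply $\partial_{i}$ to the product $f_{j}X_{j}$, track the flip $\sigma$ built into $\mathscr{D}_{i}$, and then exploit the $\sigma$-symmetry of $\mathscr{J}\mathscr{D}g$ coming from \eqref{eq:signaVSadj}. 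Once this is in hand, the remainder of the argument is pure substitution.
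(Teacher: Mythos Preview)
Your argument is correct and follows essentially the same route as the paper: both specialize Lemma~\ref{lem:DvsPartialStar} at $m=-1$, use the semicircle identity $\mathscr{J}^{*}(I)=X$, and then invoke cyclic symmetry of $g$ to identify $f\#X$ with $\mathscr{N}g$. The only minor difference is in how the $-f$ term is extracted: the paper observes that $\mathscr{J}f\#X=\mathscr{N}f$ (valid for any $f$) and uses the degree-shift identity $\mathscr{N}\mathscr{D}g=\mathscr{D}\mathscr{N}g-\mathscr{D}g$ to prove the second form first, whereas you compute $\mathscr{D}(f\#X)=f+\mathscr{J}f\#X$ directly via Leibniz and the $\sigma$-symmetry \eqref{eq:signaVSadj} to prove the first form first. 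Both computations are elementary and of comparable length; the paper's number-operator argument avoids re-deriving the $\sigma$-symmetry, while your Leibniz calculation is perhaps more self-contained.
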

\begin{proof}
When $m=-1$, %
 the equality in Lemma \ref{lem:DvsPartialStar}  %
becomes
\[
\mathscr{D}\left\{ (1\otimes\tau+\tau\otimes1)\left(Tr\left[\mathscr{J}f\right]\right)\right\} =-\mathscr{J}^{*}\left(\mathscr{J}f\right)+\mathscr{J}f\#\mathscr{J}^{*}\left(I\right),
\]
where $I\in M_{n\times n}(\mathscr{A}\otimes\mathscr{A}^{op})$ denotes
the identity matrix. 
Since $\tau$ is the semicircle law, $\mathscr{J}^{*}(I)=X$,
and $\mathscr{J}f\#X=\mathscr{N}f$ (here $\mathscr{N}$ is applied
entrywise to the vector $f$). 

On the other hand, since $\mathscr{D}$
reduces the degree of polynomials by $1$, $\mathscr{N}\mathscr{D}g=\mathscr{D}\mathscr{N}g-\mathscr{D}g=\mathscr{D}\mathscr{N}g-f$.
Thus we deduce %
\begin{eqnarray*}
\mathscr{D}\left\{ (1\otimes\tau+\tau\otimes1)\left(Tr\left[\mathscr{J}\mathscr{D}g\right]\right)-\mathscr{N}g\right\}  & = & -\mathscr{J}^{*}(\mathscr{J}f)+\mathscr{N}\mathscr{D}g-\mathscr{D}\mathscr{N}g\\
 & = & K(f).
\end{eqnarray*}
 Using the equality $f\#X=\mathscr{D}g\#X=\mathscr{N}g$ due to the fact that $g$ is cyclically symmetric, we get the
statement of the Lemma.\end{proof}
\begin{lem}
\label{lem:IfWeCanSolveFreeMongeAmpere}Let $\tau$ be the semicircle
law. Assume that $f=\mathscr{D}g$ for some
$g=g^*$ cyclically symmetric and that $\Vert\mathscr{J}f\Vert_{A\otimes_\pi A}<1$. %
Let
\[
Q(g)=\left[(1\otimes\tau+\tau\otimes1)Tr\left\{ \left(\mathscr{J}\mathscr{D}g\right)-\left(\log(1+\mathscr{J}\mathscr{D}g)\right)\right\} \right].
\]
Then Equation (\ref{eq:Main-1}) is equivalent to the equation
\begin{eqnarray*}
\mathscr{D}\left\{ (1\otimes\tau+\tau\otimes1)\left(Tr\left[\mathscr{J}\mathscr{D}g\right]\right)-\mathscr{N}g\right\} %
 & = & \mathscr{D}(W(X+\mathscr{D}g))+\mathscr{D}Q(g)+\mathscr{J}\mathscr{D}g\#\mathscr{D}g.
\end{eqnarray*}
\end{lem}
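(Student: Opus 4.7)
The plan is to assemble the desired equivalence by invoking in sequence the three lemmas already established in this subsection. Under the hypothesis $\Vert\mathscr{J}f\Vert_{A\otimes_{\pi}A}<1$, the element $1+\mathscr{J}f$ is invertible in the Banach algebra $M_{n\times n}(\mathscr{A}^{(A)}\otimes(\mathscr{A}^{(A)})^{op})$ by a Neumann series argument, and in particular the map $\xi\mapsto\xi+(\mathscr{J}f)\#\xi$ is invertible on $(\mathscr{A}^{(A)})^{n}$. Hence the hypotheses of Lemma \ref{lemma:AfterSharp} are in force, and Equation (\ref{eq:Main-1}) is equivalent to (\ref{eq:AfterSharp}):
\[
K(f)=\mathscr{D}(W(X+f))+\left[(\mathscr{J}f)\#f+(\mathscr{J}f)\#\mathscr{J}^{*}\!\left(\tfrac{\mathscr{J}f}{1+\mathscr{J}f}\right)-\mathscr{J}^{*}\!\left(\tfrac{(\mathscr{J}f)^{2}}{1+\mathscr{J}f}\right)\right].
\]

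Next, because $\tau$ is the semicircle law and $g=g^{*}$ is cyclically symmetric, Lemma \ref{lem:KasD} rewrites the left-hand side as
\[
K(f)=\mathscr{D}\left\{(1\otimes\tau+\tau\otimes 1)\bigl(Tr[\mathscr{J}\mathscr{D}g]\bigr)-\mathscr{N}g\right\},
\]
which is precisely the left-hand side of the target equation. On the right-hand side, Lemma \ref{lemma:QvsJ} (whose hypotheses are met since $\tau$ is the semicircular faithful trace satisfying $1\otimes 1\in\operatorname{dom}\partial_{j}^{*}$ and $\Vert\mathscr{J}f\Vert_{A\otimes_{\pi}A}<1$) identifies the bracketed expression involving $\mathscr{J}^{*}$ with $\mathscr{D}Q(g)$, where $Q(g)$ is defined as in the statement (note that $Tr$ and $\sum_{i=1}^{n}(\cdot)_{ii}$ agree on diagonal entries, so the two presentations of $Q(g)$ coincide). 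The remaining term $(\mathscr{J}f)\#f$ is literally $\mathscr{J}\mathscr{D}g\#\mathscr{D}g$, so no further manipulation is required.

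Putting these substitutions together yields exactly the equation displayed in the statement, and all steps are reversible, which establishes the asserted equivalence. No genuine obstacle arises: the lemma is a purely formal packaging of Lemmas \ref{lemma:AfterSharp}, \ref{lem:KasD} and \ref{lemma:QvsJ}. The only mild subtlety to be careful about is the bookkeeping of convergence of the various power series in $\mathscr{J}f$ (such as $(1+\mathscr{J}f)^{-1}$ and $\log(1+\mathscr{J}f)$), but this is precisely what the bound $\Vert\mathscr{J}f\Vert_{A\otimes_{\pi}A}<1$ guarantees via absolutely convergent Neumann/Taylor expansions in the Banach algebra structure described in \S\ref{sub:powerSeriesAndNorm}.
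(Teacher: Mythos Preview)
Your proof is correct and follows essentially the same approach as the paper: both combine Lemma~\ref{lemma:AfterSharp}, Lemma~\ref{lem:KasD}, and Lemma~\ref{lemma:QvsJ} to rewrite the left- and right-hand sides respectively, with the norm bound $\Vert\mathscr{J}f\Vert_{A\otimes_\pi A}<1$ ensuring invertibility of $1+\mathscr{J}f$ (so that the hypothesis of Lemma~\ref{lemma:AfterSharp} is met). Your write-up is in fact slightly more explicit than the paper's in naming Lemma~\ref{lemma:QvsJ} and in justifying the invertibility step.
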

\begin{proof}
By Lemma \ref{lem:KforVarm}, the left-hand side is precisely $K(f)$.
By Lemma \ref{lemma:AfterSharp},
we 
obtain
\[
K(f)=\mathscr{D}(W(X+f))+\left(\mathscr{J}f\#\mathscr{J}^{*}\left(\frac{\mathscr{J}f}{1+\mathscr{J}f}\right)-\mathscr{J}^{*}\left(\frac{(\mathscr{J}f)^{2}}{1+\mathscr{J}f}\right)+\mathscr{J}f\#f\right)
\]
which, according to Lemma \ref{lemma:AfterSharp} is
equivalent to (\ref{eq:Main-1}). The hypothesis
of Lemma \ref{lemma:AfterSharp} is satisfied since the map  $T:\xi\mapsto\mathscr{J}f\#\xi$
is strictly contractive on $(\mathscr{A}^{(A')})^{n}$ because of the bound on $\mathscr{J}f$ (and so  the map $\operatorname{id}+T$
has no kernel). 
\end{proof}
We now turn to the proof of existence of a $g$ that satisfies the
equation above. We will use a fixed-point argument; thus we first
give some estimates on differential operators that will be involved.

\subsection{Technical estimates on certain differential operators.}

For the remainder of the section, we will assume that $\tau:\mathscr{A}\to\mathbb{C}$
satisfies
\begin{equation}
|\tau(q)|\leq C_{0}^{\deg q}\label{eq:boundOnTau}
\end{equation}
for any monomial $q\in\mathscr{A}$. We begin with a few technical
estimates on certain differential operators.
\begin{lem} \label{lem:EstimateQm}
Let $g_{1},\dots,g_{m}\in\mathscr{A}_{0}$. Set
\[
Q_{m}(g_{1},\dots,g_{m})=(1\otimes\tau+\tau\otimes1)\left\{ \sum_{i=1}^{n}\left[(\mathscr{J}\mathscr{D}g_{1})\cdots(\mathscr{J}\mathscr{D}g_{m})\right]_{ii}\right\} .
\]
Assume that (\ref{eq:boundOnTau}) holds and moreover that $C_{0}/A<1/2$.
Then 
\[
\Vert Q_{m}(\Sigma g_{1},\dots,\Sigma g_{m})\Vert_A \leq 2 (2A^{-2})^{m}\prod_{k=1}^{m}\Vert g_{k}\Vert_{A}. %
\]
In particular, $Q_{m}$ extends to a bounded multilinear operator
on $\mathscr{A}^{(A)}$ with values in $\mathscr{A}^{(A)}$.\end{lem}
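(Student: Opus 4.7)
The plan is to reduce to monomials and perform a careful combinatorial enumeration. By multilinearity of $Q_m$ in each slot and the triangle inequality for $\|\cdot\|_A$, it suffices to prove the bound when each $g_k=h_k$ is a single monomial in $\mathscr{A}_0$ of degree $d_k\geq 1$. Degree-one monomials $X_j$ satisfy $\mathscr{J}\mathscr{D}X_j=0$, so they contribute nothing, and we may assume $d_k\geq 2$ and use $\Sigma h_k = h_k/d_k$. Writing $h_k = X_{j_1^{(k)}}\cdots X_{j_{d_k}^{(k)}}$, the definitions of $\partial_i$ and $\mathscr{D}_j$ give
\[
(\mathscr{J}\mathscr{D}h_k)_{ij} \;=\; \partial_i\mathscr{D}_j h_k \;=\; \sum_{\substack{(q,\ell)\in [d_k]^2\\ q\neq\ell,\ j^{(k)}_q=i,\ j^{(k)}_\ell=j}} A_k(q,\ell)\otimes B_k(q,\ell),
\]
where $A_k(q,\ell)$ and $B_k(q,\ell)$ are the two cyclic arcs of $h_k$ obtained by deleting the two positions $q$ and $\ell$; in particular $\deg A_k+\deg B_k=d_k-2$, and for each fixed arc length $a_k\in\{0,\dots,d_k-2\}$ there are exactly $d_k$ pairs $(q,\ell)$ realising it.

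Using $(a\otimes b)(a'\otimes b')=aa'\otimes b'b$ to expand the matrix product, summing the diagonal, and applying $(1\otimes\tau+\tau\otimes1)$, one obtains
\[
Q_m(h_1,\dots,h_m) \;=\; \sum_{\mathrm{matched}}\bigl[\tau(B_m\cdots B_1)\,A_1\cdots A_m \,+\, \tau(A_1\cdots A_m)\,B_m\cdots B_1\bigr],
\]
where ``matched'' means a tuple $(q_k,\ell_k)_{k=1}^m$ satisfying the cyclic constraint $j^{(k)}_{\ell_k}=j^{(k+1)}_{q_{k+1}}$ forced by the trace over the inner matrix indices. By the moment bound $|\tau(q)|\leq C_0^{\deg q}$, each summand has $\|\cdot\|_A$-norm at most $C_0^{b}A^{a}+C_0^{a}A^{b}$ with $a+b=\sum_k(d_k-2)$. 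The key step is to upper bound the constrained sum by the \emph{unrestricted} sum over all $(q_k,\ell_k)$, which factorises over $k$; under the hypothesis $C_0<A/2$ the per-factor sum is a geometric series
\[
\sum_{(q,\ell)\in[d_k]^2,\ q\neq\ell} C_0^{d_k-2-a_k}A^{a_k} \;=\; d_k\sum_{a_k=0}^{d_k-2}A^{a_k}C_0^{d_k-2-a_k} \;\leq\; d_k\cdot\frac{A^{d_k-1}}{A-C_0} \;\leq\; 2d_k A^{d_k-2}.
\]
Multiplying over $k$, adding the symmetric contribution ($A\leftrightarrow C_0$), and dividing by $\prod_k d_k$ coming from $\Sigma h_k=h_k/d_k$, yields $\|Q_m(\Sigma h_1,\dots,\Sigma h_m)\|_A\leq 2\cdot 2^m A^{\sum(d_k-2)}=2(2A^{-2})^m\prod_k\|h_k\|_A$, and the extension to general $g_k\in\mathscr{A}_0^{(A)}$ follows by multilinearity and density.

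The main obstacle is that the naive $\ell^1$-count of the $\prod_k d_k(d_k-1)$ tensor terms in $\mathscr{J}\mathscr{D}h_k$ leads to a factor $\prod_k(d_k-1)$ that grows combinatorially with the degrees and cannot be absorbed into $(\text{const})^m\prod\|h_k\|_A$. Two separate savings are needed and both appear above: first, grouping the pairs $(q,\ell)$ by their arc-length $a_k$ produces only $d_k$ pairs per length, which is exactly the factor cancelled by the $1/d_k$ coming from $\Sigma$; second, the hypothesis $C_0<A/2$ collapses the resulting sum $\sum_{a=0}^{d_k-2}A^a C_0^{d_k-2-a}$ to a bounded geometric series of size $O(A^{d_k-2})$ rather than the trivial $d_k A^{d_k-2}$ bound. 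The two factors of $2$ in $2(2A^{-2})^m$ come respectively from the two summands in $(1\otimes\tau+\tau\otimes1)$ and from the geometric series estimate.
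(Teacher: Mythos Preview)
Your argument is correct and follows essentially the same strategy as the paper: reduce to monomials by multilinearity, parametrize the terms of $\mathscr{J}\mathscr{D}h_k$ by pairs of positions in the cyclic word, observe that exactly $d_k$ pairs give each arc length (cancelling the $1/d_k$ from $\Sigma$), and sum the resulting geometric series using $C_0/A<1/2$. One small slip: with the paper's convention $(\mathscr{J}f)_{ij}=\partial_j f_i$ you should have $(\mathscr{J}\mathscr{D}h_k)_{ij}=\partial_j\mathscr{D}_i h_k$ rather than $\partial_i\mathscr{D}_j h_k$, but since you take the trace this transposition is harmless.
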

\begin{proof}
Note that by definition, %
assuming that $g$ is a monomial, we have
\begin{eqnarray*}
[\mathscr{J}\mathscr{D}\Sigma g]_{ji} & = & \frac{1}{\deg g}\partial_{i}\mathscr{D}_{j}g\\
 & = & \frac{1}{\deg g}\sum_{g=AX_{j}B}\sum_{BA=RX_{i}Q}R\otimes Q.
\end{eqnarray*}
Thus if $g_{1},\dots,g_{m}$ are monomials, then
\begin{equation*}
[(\mathscr{J}\mathscr{D}\Sigma g_{1})\cdots(\mathscr{J}\mathscr{D}\Sigma g_{m})]_{j_{0}j_{m}}
=\frac{1}{\prod_t\deg g_{t}}  \sum_{
{j_{1},\dots,j_{m-1}} \atop 
{g_{k}=A_{k}X_{j_{k}}B_{k}}
}
\sum_{B_{k}A_{k}=R_{k}X_{j_{k+1}}Q_{k}}R_{1}\cdots R_{m}\otimes Q_{1}\cdots Q_{m}
\end{equation*}
and so, using the fact that for a given $k$ there can be at most
$\deg g_{k}$ decompositions of $g_{k}$ as $A_{k}X_{j_{k+1}}B_{k}$,
and the degree of $A_{k}$ determines such decompositions, we conclude
that each such sum has at most $\prod_t \deg g_t$ nonzero terms, so that  %
\begin{eqnarray*}
&&\Vert(1\otimes\tau)[(\mathscr{J}\mathscr{D}\Sigma g_{1})\cdots(\mathscr{J}\mathscr{D}\Sigma g_{m})]_{j_{0}j_{m}}\Vert_{A}\\
&&\quad \leq
\frac{1}{\prod_t\deg g_{t}}  \sum_{
{j_{1},\dots,j_{m-1}} \atop 
{g_{k}=A_{k}X_{j_{k}}B_{k}}
}
\sum_{B_{k}A_{k}=R_{k}X_{j_{k+1}}Q_{k}}
A^{\deg(R_{1}\cdots R_{m})}\otimes|\tau(Q_{1}\cdots Q_{m})|\\
&&\quad\leq \sum_{l_1,\dots,l_m} A^{l_{1}+\cdots+l_{m}}\cdot C_{0}^{\deg g_{1}-l_{1}-2+\cdots+\deg g_{m}-l_{m}-2}\\
&&\quad =A^{\deg g_{1}+\cdots+\deg g_{m}}A^{-2m}\sum_{l_{1}\cdots l_{m-1}}\left(\frac{C_{0}}{A}\right)^{\deg g_{1}-l_{1}-2+\cdots+\deg g_{m}-l_{m}-2}\\
&&\quad \leq \prod_{k=1}^{m}[A^{-2}\Vert g_{k}\Vert_{A}\frac{1}{1-C_{0}/A}]\leq \prod_{k=1}^{m}[2A^{-2}\Vert g_{k}\Vert_{A}]. %
\end{eqnarray*}
A similar estimate holds for $(\tau\otimes1)([(\mathscr{J}\mathscr{D}\Sigma g_{1})\cdots(\mathscr{J}\mathscr{D}\Sigma g_{m})]_{j_{0}j_{m}})$. %

Since $Q_{m}(\Sigma g_{1},\dots,\Sigma g_{m})$ is multi-linear in $g_{1},\dots,g_{m}$, %
we find that if $g_{j}=\sum\lambda(j,q)q$ is the decomposition of
$g_{j}$ in terms of monomials, then we get that
\[
Q_{m}(\Sigma g_{1},\dots,\Sigma g_{m})=\sum_{q_{1},\dots,q_{m}}\lambda(1,q_{1})\cdots\lambda(m,q_{m})Q_{m}(\Sigma q_{1},\dots,\Sigma q_{m})
\]
and so
\begin{eqnarray*}
\Vert Q_{m}(\Sigma g_{1},\dots,\Sigma g_{m})\Vert_{A} & \leq & 2(2A^{-2})^{m}\sum_{q_{1},\dots,q_{m}}\prod_{j=1}^{m}|\lambda(j,q_{j})|\Vert q_{j}\Vert_{A}\\
 & = & 2(2A^{-2})^{m}\prod_{j=1}^{m}\sum_{q_{j}}|\lambda(j,q_{j})|\Vert q_{j}\Vert_{A}\\
 & = & 2(2A^{-2})^{m}\prod_{j=1}^{m}\Vert g_{j}\Vert_{A}.
\end{eqnarray*}
 This concludes the proof for arbitrary $g_1,\dots,g_m$.
\end{proof}
\begin{lem}\label{lipQ}
For $g,f\in\mathscr{A}_{0}$, set $Q_{m}(\Sigma g)=Q_{m}(\Sigma g,\dots,\Sigma g)$. %
Assume
that (\ref{eq:boundOnTau}) holds and moreover that $C_{0}/A<1/2$.
Then
\[
\Vert Q_{m} (\Sigma g) - Q_m (\Sigma f) \Vert_A\leq 2(2A^{-2})^m \sum_{k=0}^{m-1} \Vert g\Vert_A^{k} \Vert f\Vert_A^{m-k-1} \Vert f-g\Vert_A
\]
In particular, $\Vert Q_{m}(\Sigma g)\Vert_{A}\leq2 (2A^{-2})^{m}\Vert g\Vert_{A}^{m}$.\end{lem}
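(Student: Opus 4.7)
The plan is to reduce the Lipschitz estimate to the multilinear bound already established in Lemma \ref{lem:EstimateQm}, via a standard telescoping identity. The key observation is that $Q_m(\Sigma g_1,\dots,\Sigma g_m)$ is $m$-linear in $(g_1,\dots,g_m)$: indeed, $\mathscr{J}\mathscr{D}(\cdot)$ is linear, matrix products are multilinear in their factors, and $(1\otimes\tau+\tau\otimes1)\circ\operatorname{Tr}$ is linear. In particular, the symbol $Q_m(\Sigma g)=Q_m(\Sigma g,\dots,\Sigma g)$ is the diagonal restriction of a well-defined $m$-linear form.

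First, I would write the telescoping decomposition
\[
Q_{m}(\Sigma g)-Q_{m}(\Sigma f)=\sum_{k=0}^{m-1}\bigl[Q_{m}(\underbrace{\Sigma g,\dots,\Sigma g}_{k},\Sigma g,\underbrace{\Sigma f,\dots,\Sigma f}_{m-k-1})-Q_{m}(\underbrace{\Sigma g,\dots,\Sigma g}_{k},\Sigma f,\underbrace{\Sigma f,\dots,\Sigma f}_{m-k-1})\bigr],
\]
and then use multilinearity in the $(k+1)$-st slot to rewrite each summand as
\[
Q_{m}(\underbrace{\Sigma g,\dots,\Sigma g}_{k},\Sigma(g-f),\underbrace{\Sigma f,\dots,\Sigma f}_{m-k-1}).
\]
Applying Lemma \ref{lem:EstimateQm} to each of these $m$ multilinear terms yields the bound $2(2A^{-2})^{m}\|g\|_{A}^{k}\|g-f\|_{A}\|f\|_{A}^{m-k-1}$, and summing over $k=0,\dots,m-1$ gives the claimed Lipschitz estimate.

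For the ``In particular'' clause, there are two equally simple routes: either apply Lemma \ref{lem:EstimateQm} directly with $g_{1}=\cdots=g_{m}=g$, or set $f=0$ in the Lipschitz estimate just proved, in which case all summands vanish except the one with $k=m-1$, leaving $2(2A^{-2})^{m}\|g\|_{A}^{m}$. I do not foresee any real obstacle here: the only point to verify carefully is the multilinearity claim, which is immediate from the definition. Everything else is bookkeeping on the telescoping sum plus the norm estimate from the previous lemma.
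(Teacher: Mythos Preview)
Your proposal is correct and follows essentially the same approach as the paper: both use the identical telescoping of $Q_m(\Sigma g)-Q_m(\Sigma f)$ into $m$ multilinear differences, apply Lemma~\ref{lem:EstimateQm} to each, and sum. The ``In particular'' clause is also handled the same way (the paper's proof implicitly gets it from Lemma~\ref{lem:EstimateQm} with all arguments equal to $g$).
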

\begin{proof}
We perform a telescopic expansion
$$
\Vert Q_{m}(\Sigma f)-Q_{m}(\Sigma g)\Vert_{A} = \left\Vert \sum_{k=0}^{m-1}Q_{m}(\underbrace{\Sigma g,\dots,\Sigma g}_{k},\underbrace{\Sigma f,\dots,\Sigma f}_{m-k})-Q_{m}(\underbrace{\Sigma g,\dots,\Sigma g}_{k+1},\underbrace{\Sigma f,\dots,\Sigma f}_{m-k-1})\right\Vert _{A}$$
\begin{eqnarray*}
 & \leq & \sum_{k=0}^{m-1}\Vert Q_{m}(\underbrace{\Sigma g,\dots,\Sigma g}_{k},\Sigma f-\Sigma g,\underbrace{\Sigma f,\dots,\Sigma f}_{m-k-1})\Vert_{A}\\
 & \leq & 2(2A^{-2})^{m}\sum_{k=0}^{m-1}\Vert g\Vert_{A}^{k}\Vert f\Vert_{A}^{m-k-1}\Vert f-g\Vert_{A}\,.
 \end{eqnarray*} %
\end{proof}
\begin{cor}
Assume that (\ref{eq:boundOnTau}) holds and moreover that $C_{0}/A<1/2$. %
Then the maps $g\mapsto Q_{m}(\Sigma g)$ extend
by continuity to the completion of $\mathscr{A}_{0}$ with respect
to $\Vert\cdot\Vert_{A}$. \end{cor}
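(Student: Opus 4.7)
The plan is to deduce the corollary as a direct extension-by-continuity argument from the quantitative estimate in Lemma \ref{lipQ}. First, I would fix an arbitrary radius $R>0$ and restrict attention to the ball $B_R=\{g\in\mathscr{A}_0:\Vert g\Vert_A\le R\}$. By Lemma \ref{lipQ}, for any $f,g\in B_R$ one has
\[
\Vert Q_m(\Sigma g)-Q_m(\Sigma f)\Vert_A\le 2(2A^{-2})^m\sum_{k=0}^{m-1}\Vert g\Vert_A^{k}\Vert f\Vert_A^{m-k-1}\Vert f-g\Vert_A\le 2m(2A^{-2})^m R^{m-1}\Vert f-g\Vert_A,
\]
which shows that $g\mapsto Q_m(\Sigma g)$ is Lipschitz on $B_R$ with constant $L_m(R):=2m(2A^{-2})^m R^{m-1}$, and in particular uniformly continuous there.

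Next I would invoke the standard fact that a uniformly continuous map from a dense subset of a metric space into a complete metric space admits a unique continuous extension. Since $\mathscr{A}_0$ is dense in its completion $\mathscr{A}_0^{(A)}$ (which is a Banach space, hence complete), and since $B_R\cap\mathscr{A}_0$ is dense in the closed ball of radius $R$ of $\mathscr{A}_0^{(A)}$, the map $Q_m\circ\Sigma$ extends uniquely and continuously to this closed ball, with values in $\mathscr{A}^{(A)}$ (the target is complete by construction, and Lemma \ref{lem:EstimateQm} guarantees that $Q_m(\Sigma g)\in\mathscr{A}^{(A)}$ with $\Vert Q_m(\Sigma g)\Vert_A\le 2(2A^{-2})^m R^m$, so the extension lands in $\mathscr{A}^{(A)}$). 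Since $R$ is arbitrary and the extensions on nested balls agree on their intersections by uniqueness, they piece together into a globally defined continuous map on all of $\mathscr{A}_0^{(A)}$.

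There is no real obstacle here; the only point to be slightly careful about is that the Lipschitz constant $L_m(R)$ depends on $R$, so one cannot get a single global Lipschitz extension, only local uniform continuity on each ball. This is enough, however, since continuity is a local property and compatibility of the extensions on nested balls is automatic from uniqueness. The multilinear version (extending $(g_1,\dots,g_m)\mapsto Q_m(\Sigma g_1,\dots,\Sigma g_m)$) follows by the same argument applied to the multilinear Lipschitz bound implicit in Lemma \ref{lem:EstimateQm}.
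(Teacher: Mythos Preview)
Your proposal is correct and is precisely the intended argument: the paper states this corollary without proof, as it is meant to follow immediately from the local Lipschitz estimate of Lemma~\ref{lipQ} via the standard extension of uniformly continuous maps to the completion. Your write-up just makes this explicit.
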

\begin{lem}
\label{lemma:DefOfQ}Assume that (\ref{eq:boundOnTau}) holds and
moreover that $C_{0}/A<1/2$. Let $g\in\mathscr{A}^{(A)}_0$ %
 be such
that $\Vert g\Vert_{A}<\frac{A^{2}}{2}$, and set
\[
Q(\Sigma g)=\sum_{m\geq0}\frac{(-1)^{m}}{m+2}Q_{m+2}(\Sigma g).
\]
Then this series converges in $\Vert\cdot\Vert_{A}$. Moreover, in
the sense of analytic functional calculus on $M_{n\times n}(W^{*}(\mathscr{A}\otimes\mathscr{A}^{op},\tau\otimes\tau^{op}))$,
we have equality
\[
Q(\Sigma g)=\left[(1\otimes\tau+\tau\otimes1)\left\{ \sum_{i=1}^{n}\left(\mathscr{J}\mathscr{D}\Sigma g\right)_{ii}-\left(\log(1+\mathscr{J}\mathscr{D}\Sigma g)\right)_{ii}\right\} \right].
\]
Furthermore, the function $Q$ satisfies the (local) Lipschitz condition on $\{g:\|g\|_A < A^2/2\}$ %
\[
\Vert Q(\Sigma g)-Q(\Sigma f)\Vert_A \leq \Vert f-g \Vert_A \frac{2}{A^2}\left(\frac{1}{\left(1-\frac{2\Vert f\Vert_A}{A^2}\right)\left(1-\frac{2\Vert g\Vert_A}{A^2}\right)}-1\right),
\]
and the bound
\[
\Vert Q(\Sigma g)\Vert_A \leq \frac{\left(\frac{2\Vert g\Vert_A}{A^2}\right)^2}{1-\frac{2\Vert g\Vert_A}{A^2}}\,.
\]

\end{lem}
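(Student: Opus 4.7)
The plan is to bootstrap from the termwise bounds of Lemma \ref{lipQ}, namely $\Vert Q_m(\Sigma g)\Vert_A\le 2(2A^{-2})^m\Vert g\Vert_A^m$ together with its Lipschitz analogue, to establish all three assertions of the statement.

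First, set $\alpha=2\Vert g\Vert_A/A^2$, which lies in $(0,1)$ by the hypothesis $\Vert g\Vert_A<A^2/2$. The scalar majorant $\sum_{m\ge 0}\alpha^{m+2}/(m+2)$ converges, so the series defining $Q(\Sigma g)$ converges absolutely in the Banach space $\mathscr{A}^{(A)}$. For the uniform norm bound, the crude estimate $1/(m+2)\le 1/2$ collapses the majorant further to $\sum_{m\ge 0}\alpha^{m+2}=\alpha^2/(1-\alpha)$, which is exactly the norm inequality claimed.

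Second, to identify $Q(\Sigma g)$ with $(1\otimes\tau+\tau\otimes 1)\operatorname{Tr}\bigl[\mathscr{J}\mathscr{D}\Sigma g-\log(1+\mathscr{J}\mathscr{D}\Sigma g)\bigr]$, start from the scalar Taylor identity $x-\log(1+x)=\sum_{m\ge 0}\frac{(-1)^m}{m+2}x^{m+2}$, valid for $|x|<1$, and apply it formally to $x=\mathscr{J}\mathscr{D}\Sigma g$. A repetition of the counting argument of Lemma \ref{lem:EstimateQm}, omitting the $(1\otimes\tau+\tau\otimes 1)$ step, shows that $\Vert(\mathscr{J}\mathscr{D}\Sigma g)^m\Vert_{A\otimes_\pi A}$ decays geometrically under our hypothesis, so the series converges absolutely in the projective-tensor Banach algebra $M_{n\times n}(\mathscr{A}^{(A)}\otimes_\pi(\mathscr{A}^{(A)})^{op})$ and a fortiori in $M_{n\times n}(W^*(\mathscr{A}\otimes\mathscr{A}^{op}))$, where it coincides with $\log(1+\mathscr{J}\mathscr{D}\Sigma g)$ defined by the holomorphic functional calculus. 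Since $R\mapsto(1\otimes\tau+\tau\otimes 1)\sum_i R_{ii}$ is a bounded linear map into $\mathscr{A}^{(A)}$, it may be pulled inside the convergent sum, and a termwise comparison with the definition of $Q_{m+2}$ yields the claimed equality.

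Third, a termwise application of Lemma \ref{lipQ} bounds
\[
\Vert Q(\Sigma g)-Q(\Sigma f)\Vert_A\le \Vert f-g\Vert_A\sum_{m\ge 0}\frac{2(2A^{-2})^{m+2}}{m+2}\sum_{k=0}^{m+1}\Vert g\Vert_A^k\Vert f\Vert_A^{m+1-k}.
\]
Rescaling by $\alpha=2\Vert g\Vert_A/A^2$ and $\beta=2\Vert f\Vert_A/A^2$ reduces each summand to $(2/A^2)\alpha^k\beta^{m+1-k}$; using $1/(m+2)\le 1/2$ and reindexing the resulting double sum as $\sum_{i,j\ge 0,\,(i,j)\ne(0,0)}\alpha^i\beta^j=(1-\alpha)^{-1}(1-\beta)^{-1}-1$ produces exactly the Lipschitz bound in the statement. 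I expect no serious obstacle: Lemmas \ref{lem:EstimateQm} and \ref{lipQ} do the combinatorial heavy lifting, and the only mildly delicate point is the functional-calculus identification in the second step, which is handled by performing all manipulations first in the projective-tensor Banach algebra, where the scalar Taylor series transfer verbatim, before passing to the von Neumann algebra completion.
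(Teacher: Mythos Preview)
Your proof is correct and follows essentially the same route as the paper: termwise control via Lemmas~\ref{lem:EstimateQm} and~\ref{lipQ}, the Taylor identity $x-\log(1+x)=\sum_{m\ge0}\frac{(-1)^m}{m+2}x^{m+2}$ for the functional-calculus identification, and the same reindexing $(i,j)=(k,m+1-k)$ with $1/(m+2)\le 1/2$ to collapse the double sum into $(1-\alpha)^{-1}(1-\beta)^{-1}-1$. The only cosmetic difference is that you obtain the bound on $\Vert Q(\Sigma g)\Vert_A$ directly from the termwise estimate, whereas the paper deduces it by specializing the Lipschitz inequality to $f=0$; both yield the same $\alpha^2/(1-\alpha)$.

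One small caution: your claim that the counting of Lemma~\ref{lem:EstimateQm} ``omitting the $(1\otimes\tau+\tau\otimes1)$ step'' gives geometric decay of $\Vert(\mathscr{J}\mathscr{D}\Sigma g)^m\Vert_{A\otimes_\pi A}$ at the \emph{same} radius $A$ is a bit stronger than what that argument delivers---without the $\tau$ one picks up a factor of $\deg q$ that is only absorbed by shrinking the radius (compare \eqref{bbJ}). This does not affect your conclusion, since the lemma only asserts the identification at the von~Neumann algebra level, where the operators $X_i$ have norm at most $C_0<A/2$ and the hypothesis $C_0/A<1/2$ together with $\Vert g\Vert_A<A^2/2$ is enough to force $\Vert\mathscr{J}\mathscr{D}\Sigma g\Vert_{\mathrm{op}}<1$; the paper itself is equally terse on this point.
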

\begin{proof}
Let $\kappa=\frac{A^{2}}{2}$ and $\lambda=\|g\|_A$. By Lemma \ref{lem:EstimateQm}, we have that $\Vert Q_{m+2}(g)\Vert_A \le 2(\lambda/\kappa)^{(m+2)}$. Thus, as we assumed  $\lambda<\kappa$,
  the series %
defining $Q(\Sigma g)$ converges. 
 To see the claimed equality with the expression given by functional
calculus, we only need to note that $\log(1+x)=-\sum_{m\geq1}\frac{(-x)^{m}}{m}.$
Finally, since $m+2\geq 2$ in our series, we obtain that
\begin{eqnarray*}
\Vert Q(\Sigma g)-Q(\Sigma f)\Vert_{A}&\leq&\sum_{m\geq 0}\frac{1}{m+2}\Vert Q_{m+2}(\Sigma g)-Q_{m+2}(\Sigma f)\Vert_{A}\\
&\leq&\Vert f-g\Vert_{A}\sum_{m\geq 0} \sum_{k=0}^{m+1} (2A^{-2})^{m+2}    \Vert f\Vert_{A}^{m-k+1}\Vert g\Vert_{A}^{k}\\
&\leq& \Vert f-g\Vert_{A}(2A^{-2}) %
\left(\sum_{l\geq 0} \sum_{k\geq 0} (2A^{-2})^{l} \Vert f\Vert_A^{l} (2A^{-2})^{k} \Vert g\Vert_{A}^k-1\right),
\end{eqnarray*}
where we have written $m=l+k-1$ which is non-negative precisely when $l$ and $k$ are not both zero. %
Thus, if $\|f\|_A\le A^2/2$ and $\|g\|_A\le A^2/2$, we deduce that
$$\Vert Q(\Sigma g)-Q(\Sigma f)\Vert_A \leq \Vert f-g \Vert_A \frac{2}{A^2}\left(\frac{1}{\left(1-\frac{2\Vert f\Vert_A}{A^2}\right)\left(1-\frac{2\Vert g\Vert_A}{A^2}\right)}-1\right).
$$
Setting $f=0$ gives us the estimate
$$\Vert Q(\Sigma g)\Vert_A \leq \frac{2\Vert g\Vert_A}{A^2}\left(\frac{1}{1-\frac{2\Vert g\Vert_A}{A^2}}-1\right),$$ as claimed.
 \end{proof}
\begin{cor} %
\label{cor:FisLipsch}Let $g\in \mathscr{A}^{(A)}_0$, %
assume that  $\Vert g\Vert_{A}<A^{2}/2$ and let $B\geq A+\Vert g\Vert_A$.  Let 
$W\in\mathscr{A}^{(B)}$. 
Assume that (\ref{eq:boundOnTau}) holds and moreover that $C_{0}/A<1/2$.  %
Let
\begin{eqnarray*}
F(g)&=&-W(X)+\Bigg((1\otimes\tau+\tau\otimes1)(Tr(\mathscr{J}\mathscr{D}\Sigma g))\\
&&-\left\{ W(X+\mathscr{D}\Sigma g)-W(X)+Q(\Sigma g)+\frac{1}{2}\mathscr{D}\Sigma g\#\mathscr{D}\Sigma g\right\} \Bigg)\\
&=&-W(X+\mathscr{D}\Sigma g)-\frac{1}{2}\mathscr{D}\Sigma g\#\mathscr{D}\Sigma g+(1\otimes\tau+\tau\otimes1)Tr\log(1+\mathscr{J}\mathscr{D}\Sigma g).
\end{eqnarray*}
(here we abbreviate $W(X_{1},\dots,X_{n})$ by $W(X)$, %
etc.) 

Then $F(g)$ is a well-defined function from $\{g \in \mathscr{A}_{0}^{(A)}: \Vert g\Vert_A < {A^2}/{2}\}$
 to $\mathscr{A}^{(A)}$. %
  Moreover $g\mapsto F(g)$
is locally Lipschitz on $\{ g : \Vert g\Vert_A < A^2 / 2\}$:
\begin{multline*}
\Vert F(g)-F(f)\Vert_{A}\leq
\Vert f-g \Vert_A \Bigg\{ \frac{2}{A^2}\left(\frac{1}{\left(1-\frac{2\Vert f\Vert_A}{A^2}\right)\left(1-\frac{2\Vert g\Vert_A}{A^2}\right)}+1\right)\\
+ \sum_j \Vert \partial_j W\Vert_{B\otimes_\pi B} + \frac{1}{2}\left(\Vert f\Vert_A + \Vert g\Vert_A\right) 
\Bigg\}
\end{multline*}
 and bounded:
\[
\Vert F(g)\Vert_{A}\leq
\Vert g \Vert_A \Bigg\{ \frac{2}{A^2}\left(\frac{1}{1-\frac{2\Vert g\Vert_A}{A^2}}+1\right)
+ \sum_j \Vert \partial_j W\Vert_{B\otimes_\pi B} + \frac{1}{2} \Vert g\Vert_A
\Bigg\} + \Vert W\Vert_A
\]

In particular, if $A$, $\rho$ and $W$ are such that %
\begin{equation}\label{eq:contractivityConditions}
\left\{ \begin{array}{l}A>4, \qquad 0<\rho \leq 1 \\
\Vert W\Vert_A<\frac{\rho}{12}
\\ \sum_j\Vert \partial_j W\Vert_{(A+\rho)\otimes_\pi (A+\rho)} < \frac{1}{8}
\end{array}\right. \end{equation}
 then $F$ takes the ball $\{ g: \Vert g\Vert_A <\frac{\rho}{4}\}$ to itself and is uniformly contractive with constant $\lambda \leq \frac{7}{8}$ on that ball.
\end{cor}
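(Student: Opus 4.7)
The plan is to prove the corollary by decomposing $F(g)$ into three additive pieces, estimating each separately in $\Vert\cdot\Vert_A$, and then doing an arithmetic check against the conditions \eqref{eq:contractivityConditions}. The three pieces are the composition $-W(X+\mathscr{D}\Sigma g)$, the quadratic term $-\tfrac12 \mathscr{D}\Sigma g\#\mathscr{D}\Sigma g$, and the trace-log $(1\otimes\tau+\tau\otimes 1)\mathrm{Tr}\log(1+\mathscr{J}\mathscr{D}\Sigma g)$. The key a priori input is the elementary estimate $\|\mathscr{D}\Sigma h\|_A \leq A^{-1}\|h\|_A$ (which follows monomial-by-monomial: a monomial of degree $k$ produces at most $k$ cyclic-derivative terms of degree $k-1$, and $\Sigma$ divides by $k$). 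Together with the universal property of $\|\cdot\|_A$ recalled in \S\ref{sub:powerSeriesAndNorm}, this gives $\|X + \mathscr{D}\Sigma g\|_A \leq A + \|g\|_A \leq B$, so the composition $W(X+\mathscr{D}\Sigma g)$ is well-defined in $\mathscr{A}^{(A)}$ with norm at most $\|W\|_B$. The quadratic piece is bounded via submultiplicativity of $\|\cdot\|_A$, and the trace-log is well-defined and equal to $(1\otimes\tau+\tau\otimes 1)\mathrm{Tr}(\mathscr{J}\mathscr{D}\Sigma g) - Q(\Sigma g)$ by Lemma \ref{lemma:DefOfQ}, with both summands controlled there.

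For the Lipschitz estimate, I would bound each piece. For the trace-log, the Lipschitz constant of $-Q(\Sigma g)$ is given directly by Lemma \ref{lemma:DefOfQ}, and the linear correction $(1\otimes\tau+\tau\otimes 1)\mathrm{Tr}(\mathscr{J}\mathscr{D}\Sigma g)$ has Lipschitz constant controlled by Lemma \ref{lem:EstimateQm} with $m=1$; adding these accounts exactly for the $\tfrac{2}{A^2}\!\bigl(\tfrac{1}{(1-2\|f\|_A/A^2)(1-2\|g\|_A/A^2)}+1\bigr)$ term in the statement. For the quadratic piece, I would write $\tfrac12(a\#a - b\#b) = \tfrac12(a-b)\# a + \tfrac12 b\#(a-b)$ with $a=\mathscr{D}\Sigma g$, $b=\mathscr{D}\Sigma f$, and use the crude bound $\|\mathscr{D}\Sigma h\|_A \leq \|h\|_A$ (valid since $A>1$) to get the Lipschitz constant $\tfrac12(\|f\|_A + \|g\|_A)$. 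For the composition $W(X+\mathscr{D}\Sigma g)$, I would interpolate along the straight line $Y^t = X + \mathscr{D}\Sigma(tg + (1-t)f)$ and use the free Taylor formula $\tfrac{d}{dt} W(Y^t) = \sum_j (\partial_j W)(Y^t) \# (Y^1_j - Y^0_j)$, combined with $\|Y^t_i\|_A \leq B$, the contractivity of $\#$ from $\Vert\cdot\Vert_{B\otimes_{\pi}B}\times\Vert\cdot\Vert_A$ to $\Vert\cdot\Vert_A$, and again $\|\mathscr{D}\Sigma(g-f)\|_A\leq\|g-f\|_A$, to produce the coefficient $\sum_j \|\partial_j W\|_{B\otimes_\pi B}$. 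Summing gives the claimed Lipschitz bound.

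The boundedness statement on $\|F(g)\|_A$ then follows by taking $f=0$ in the Lipschitz bound and noting that $F(0) = -W(X)$, so $\|F(0)\|_A = \|W\|_A$.

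Finally, to verify the contractivity conclusion under \eqref{eq:contractivityConditions}, I would set $B = A+\rho$ and restrict to $\|g\|_A, \|f\|_A \leq \rho/4$. Since $A > 4$ and $\rho \leq 1$, one has $2\|g\|_A/A^2 \leq 1/32$, and so $\tfrac{2}{A^2}\!\bigl(\tfrac{1}{(1-2\|f\|_A/A^2)(1-2\|g\|_A/A^2)}+1\bigr) \leq \tfrac{1}{8}\!\bigl((32/31)^2+1\bigr) < 0.26$. Combining with $\sum_j \|\partial_j W\|_{B\otimes_\pi B} < 1/8$ and $\tfrac12(\|f\|_A+\|g\|_A) \leq \rho/4 \leq 1/4$ yields a Lipschitz constant strictly less than $7/8$. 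For the self-map property, the boundedness estimate gives $\|F(g)\|_A \leq (\rho/4)\cdot 0.51 + \rho/12 < \rho/4$. The main obstacle is simply keeping the free-difference-quotient interpolation for $W(X + \mathscr{D}\Sigma g) - W(X + \mathscr{D}\Sigma f)$ correct and absorbing all constants cleanly; the rest is routine combination of the preceding lemmas with elementary arithmetic.
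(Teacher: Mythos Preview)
Your proposal is correct and follows essentially the same route as the paper: decompose $F$ into the composition term, the quadratic term, and the trace-log term (split as $Q_1(\Sigma g)-Q(\Sigma g)$), invoke Lemmas \ref{lem:EstimateQm} and \ref{lemma:DefOfQ} for the last piece, use a telescope for the quadratic, and a free mean-value estimate for $W(X+\mathscr{D}\Sigma g)-W(X+\mathscr{D}\Sigma f)$; then read off $\Vert F(g)\Vert_A$ from $F(0)=-W$ and check the arithmetic under \eqref{eq:contractivityConditions}. Your numerical bounds are in fact slightly sharper than the paper's (you use $\Vert g\Vert_A\le\rho/4$ rather than $\Vert g\Vert_A<1$ in the denominator), but the structure of the argument is identical.
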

\begin{proof}
Note that $\Vert \mathscr{D}\Sigma g\Vert_A \leq \Vert g\Vert_A$ and so $\Vert X+\mathscr D\Sigma g\Vert \leq B$.  
Furthermore, for any $h$, $$\sum_i \Vert \mathscr{D}_i \Sigma h\Vert_A \leq \Vert h\Vert_A.$$ Indeed, if  $h=\sum_r\sum_{i_1,\dots,i_r} 
\alpha_{i_1,\dots,i_r} X_{i_1}\cdots X_{i_r}$, then since $A>1$
$$\sum_i \Vert \mathscr{D}_i \Sigma h\Vert_A = \sum_r \sum_{i_1,\dots,i_r} \sum_{q=1}^r |\alpha_{i_1,\dots,i_r} | r^{-1} \Vert 
X_{i_{q+1}}\cdots X_{i_r} X_{1}\cdots X_{i_{q-1}} \Vert_A \leq \frac{1}{A} \Vert h\Vert_A\le\Vert h\Vert_A .$$
The Lipschitz property then follows from Lemmas \ref{lemma:DefOfQ} and \ref{lipQ}, %
 as well as the estimates:
\begin{eqnarray*}
\Vert W(X+\mathscr{D}\Sigma g)-W(X+\mathscr{D}\Sigma f)\Vert_{A} & \leq & \sum_j \Vert\partial_j W\Vert_{B\otimes_{\pi}B}\Vert\mathscr{D}_j\Sigma g-\mathscr{D}_j\Sigma f\Vert_{A}\\
 & \leq & \sum_j \Vert\partial_j W\Vert_{B\otimes_{\pi}B}\Vert g-f\Vert_{A} 
\end{eqnarray*}
and
\begin{eqnarray*}
\left\Vert \frac{1}{2}\mathscr{D}\Sigma g\#\mathscr{D}\Sigma g-\frac{1}{2}\mathscr{D}\Sigma f\#\mathscr{D}\Sigma f\right\Vert _{A} & \le & \frac{1}{2}\Vert\mathscr{D}\Sigma g\#(\mathscr{D}\Sigma g-\mathscr{D}\Sigma f)\Vert_{A}\\
 &  & +\frac{1}{2}\Vert\mathscr{D}\Sigma f\#(\mathscr{D}\Sigma g-\mathscr{D}\Sigma f)\Vert_{A}\\
 &\leq & \frac{1}{2}(\sum_i \Vert \mathscr{D }_i\Sigma g\Vert_A) \max_{i} \Vert (\mathscr{D}_i \Sigma g - \mathscr{D}_i \Sigma f) \Vert_A 
 \\ & & +  \frac{1}{2} (\sum_i \Vert \mathscr{D }_i\Sigma f\Vert_A) \max_{i} \Vert (\mathscr{D}_i \Sigma g - \mathscr{D}_i \Sigma f) \Vert_A \\
 & \leq & \frac{1}{2}\Vert\mathscr{D}\Sigma g-\mathscr{D}\Sigma f\Vert_{A}\left(\Vert\mathscr{D}\Sigma g\Vert_{A}+\Vert\mathscr{D}\Sigma f\Vert_{A}\right) \\ 
 & \leq & \frac{1}{2}\Vert g-f\Vert_A \left( \Vert g\Vert_A + \Vert f \Vert_A \right).
\end{eqnarray*}
The estimate on $\Vert F(g)\Vert_A$ follows from the identity $F(0)=-W$, Lemmas \ref{lem:EstimateQm} and \ref{lemma:DefOfQ} and the Lipschitz property.

Assuming that  \eqref{eq:contractivityConditions} holds, so that $A>4$ and $\Vert f\|_A, \Vert g\Vert_A < \frac{\rho}{4} < \rho\leq 1$, we have that
$$\frac{2}{A^2}\left(\frac{1}{\left(1-\frac{2\Vert f\Vert_A}{A^2}\right)\left(1-\frac{2\Vert g\Vert_A}{A^2}\right)}+1\right)
< \frac{1}{8}\left(\frac{64}{49} +1 \right) < \frac{1}{2}$$ and so the Lipschitz constant of $F$ is bounded by 
$\frac{1}{2} + \frac{1}{4} +\sum_j \Vert \partial_j W\Vert_{(A+\rho)\otimes_\pi (A+\rho)}< \frac {1}{2}+\frac{1}{4} + \frac{1}{8}=\frac{7}{8}$.  Also,
$$\Vert F(g)\Vert < \frac{\rho}{4} \left\{ \frac{1}{8}\left(\frac{8}{7}+1\right)+\frac{1}{8} + \frac{1}{8}\right\} + \Vert W\Vert_A <
\left(\frac{2}{3}  +\frac{1}{3}\right)\frac{\rho}{4}=\frac{\rho}{4}  $$
so that the image of the ball $\{g:\Vert g\Vert_A<\frac{\rho}{4}\}$ is contained in that ball.
\end{proof}

\subsection{Existence of $g$.}

We will now consider the case that $\tau$ is the semicircle law.
Then (\ref{eq:boundOnTau}) holds with $C_{0}=2$. We remind the reader that $\Pi, \Sigma,$ are defined in section \ref{secdef} whereas $\mathscr S$ is defined in Definition \ref{defS}. %
\begin{prop}
\label{prop:gExists}Let $\tau$ be the semicircle law.  Assume that for some $A$ and $\rho$, $W\in\mathscr{A}_0^{(A)}$ is cyclically symmetric and that 
 conditions \eqref{eq:contractivityConditions} are satisfied, i.e., 
\begin{equation*}
\left\{ \begin{array}{l}A>4,\qquad 0<\rho\leq 1 \\
\Vert W\Vert_A<\frac{\rho}{12}
\\ \sum_j \Vert \partial_j W\Vert_{(A+\rho)\otimes_\pi (A+\rho)} < \frac{1}{8}
\end{array}\right. \end{equation*}
Then there exists $\hat{g}$ and $g=\Sigma\hat{g}$ (so that $\hat{g}=\mathscr{N}g$)  %
with the following properties:\\
(i) Both $\hat{g}$ and $g$ belong to the completion of $\mathscr{A}_{0}$
with respect to the norm $\Vert\cdot\Vert_{A}$ and $\Vert g\Vert_A \leq \rho/4$, $\Vert g\Vert_A \leq 3 \Vert W\Vert_A $;\\
(ii) $\hat{g}$ satisfies the equation $\hat{g}=\mathscr{S}\Pi F(\hat{g})$\\
(iii) $\hat{g}$ and $g$ depend analytically on $W$, in the following
sense: if the maps $\beta\mapsto W_{\beta}$ are analytic, then also
the maps $\beta\mapsto\hat{g}(\beta)$ and $\beta\mapsto g(\beta)$ are analytic,
and $g\to0$ if $\Vert W\Vert_{A}\to0$\\
(iv) $g$ satisfies the equation
\begin{multline}
\mathscr{S}\Pi\mathscr{N}g=- W(X)+\mathscr{S}\Pi\Bigg((1\otimes\tau+\tau\otimes1)(Tr(\mathscr{J}\mathscr{D}g))-\\
\left\{ W(X+\mathscr{D}g)-W(X)+Q(g)+\frac{1}{2}\mathscr{D}g\#\mathscr{D}g\right\} \Bigg)\\
=\mathscr{S}\Pi\left[-W(X+\mathscr{D} g)-\frac{1}{2}\mathscr{D} g\#\mathscr{D} g+(1\otimes\tau+\tau\otimes1)Tr\log(1+\mathscr{J}\mathscr{D}g)\right].\label{eq:freeMongeAmpereEquationNdim}
\end{multline}
or, equivalently,
\begin{multline}
-\mathscr{S}\Pi\mathscr{N}g+\mathscr{S}\Pi(1\otimes\tau+\tau\otimes1)(Tr(\mathscr{J}\mathscr{D}g))=W(X)\\
+\mathscr{S}\Pi\left\{ W(X+\mathscr{D}g)-W(X)+Q(g)+\frac{1}{2}\mathscr{D}g\#\mathscr{D}g\right\} .\label{eq:Forg}
\end{multline}
\end{prop}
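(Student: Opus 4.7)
The plan is to apply the Banach contraction mapping theorem to the operator $T := \mathscr{S}\Pi F$ acting on the closed ball $B := \{\hat{g}\in\mathscr{A}_0^{(A)} : \|\hat g\|_A \leq \rho/4\}$, where $F$ is the map analyzed in Corollary \ref{cor:FisLipsch}, and then to define $g := \Sigma\hat g$ and check that the fixed-point equation for $\hat g$ unpacks into \eqref{eq:freeMongeAmpereEquationNdim}.

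For step one, Corollary \ref{cor:FisLipsch} shows, under the assumptions \eqref{eq:contractivityConditions}, that $F$ maps $B$ into itself with Lipschitz constant at most $7/8$. Since $\|\Pi h\|_A\le\|h\|_A$ (removing the constant term is norm-nonincreasing) and $\|\mathscr{S}h\|_A\le\|h\|_A$ by the remarks after Definition \ref{defS}, the composite $T = \mathscr{S}\Pi F$ also maps $B$ into $B\cap\mathscr{A}_0^{(A)}$ with the same Lipschitz bound. Banach's theorem then produces a unique fixed point $\hat g\in B$, proving (ii).

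For step two, I would set $g := \Sigma\hat g\in\mathscr{A}_0^{(A)}$; since $\Sigma$ divides each degree-$k$ monomial by $k\ge 1$, $\|g\|_A\le\|\hat g\|_A\le\rho/4$. Because $\hat g$ lies in the range of $\mathscr{S}$, it is cyclically symmetric, so $\mathscr{S}\hat g = \hat g$ (the symmetrization is the identity on cyclically symmetric elements). Combined with $\mathscr{N}\Sigma = \mathrm{id}$ on $\mathscr{A}_0$, this gives $\mathscr{S}\Pi\mathscr{N}g = \mathscr{S}\mathscr{N}g = \mathscr{S}\hat g = \hat g$. On the other hand, substituting $\Sigma\hat g = g$ into the defining formula of $F$ from Corollary \ref{cor:FisLipsch}, the right-hand side of $\hat g = \mathscr{S}\Pi F(\hat g)$ becomes exactly the right-hand side of \eqref{eq:freeMongeAmpereEquationNdim}, proving (iv). For the sharper bound $\|g\|_A\leq 3\|W\|_A$ in (i), I would use $F(0)=-W$ together with the Lipschitz estimate of Corollary \ref{cor:FisLipsch} applied between $f=0$ and $g=\hat g$; a short computation exploiting $\|\hat g\|_A\le\rho/4\le 1/4$ and $A>4$ shows the Lipschitz constant between $0$ and $\hat g$ is at most $2/3$, so the fixed-point identity $\hat g = \mathscr{S}\Pi F(\hat g)$ yields $\|\hat g\|_A\leq\|W\|_A + (2/3)\|\hat g\|_A$, hence $\|\hat g\|_A\leq 3\|W\|_A$, and the same for $g$.

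For (iii), each Picard iterate $T_W^k(0)$ depends polynomially on the coefficients of $W$, and the convergence $T_W^k(0)\to\hat g$ is uniform in $W$ on a neighborhood of any admissible $W$ (uniform contraction), so $\hat g$ and $g = \Sigma\hat g$ depend analytically on $W$, and $g\to 0$ as $\|W\|_A\to 0$ follows from the bound in (i). The only genuinely delicate step is the bookkeeping of step two: keeping track of $\mathscr{S}$, $\Pi$, $\Sigma$ and $\mathscr{N}$ to verify that the fixed-point equation for $\hat g$ indeed translates into the desired equation for $g$. This rests entirely on the observation that $\hat g$, being in the image of $\mathscr{S}$, is cyclically symmetric and hence fixed by $\mathscr{S}$; once this is noted, the contraction argument itself is standard given Corollary \ref{cor:FisLipsch}.
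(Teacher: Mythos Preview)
Your proposal is correct and follows essentially the same route as the paper: both apply the contraction mapping principle to $\mathscr{S}\Pi F$ on the ball of radius $\rho/4$, using the estimates from Corollary~\ref{cor:FisLipsch}, and both deduce (iii) from uniform convergence of the Picard iterates. The one notable difference is your derivation of the sharper bound $\|g\|_A\le 3\|W\|_A$: you read it off directly from the inequality $\|F(\hat g)\|_A\le c\|\hat g\|_A+\|W\|_A$ with $c<2/3$ (which is exactly the constant appearing in the proof of Corollary~\ref{cor:FisLipsch}) together with the fixed-point identity, whereas the paper instead observes that the hypotheses \eqref{eq:contractivityConditions} persist when $\rho$ is replaced by any smaller $\rho'>12\|W\|_A$, so by uniqueness the same fixed point lies in every smaller ball of radius $\rho'/4$. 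Both arguments are valid; yours is slightly more direct. One small wording issue: ``depends polynomially on the coefficients of $W$'' is not quite right for the iterates (the map $g\mapsto W(X+\mathscr D\Sigma g)$ is not polynomial in $W$), but what you need and use---analyticity of each iterate in $\beta$ when $\beta\mapsto W_\beta$ is analytic---is correct.
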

\begin{proof} Observe that Equation \eqref{eq:freeMongeAmpereEquationNdim} is equivalent to
$$\mathscr{S}\Pi\mathscr{N}g=\mathscr{S}\Pi F(\mathscr{N}g)$$
with $F$ the function defined in Corollary \ref{cor:FisLipsch}, where we used that $W$ is cyclically symmetric and in $\mathcal A_0^{(A)}$. %
Existence of $g$ follows, essentially, from the Implicit function
Theorem \ref{theoimplicit}. We prefer to repeat its proof here for completeness. We set
$\hat{g}_{0}=W(X_{1},\dots,X_{n})$, %
and
for each $k>0$,
\[
\hat{g}_{k}=\mathscr{S}\Pi F(\hat{g}_{k-1}),%
\]%

Since $\mathscr{S}\Pi$ is a linear contraction, the last part of Corollary~\ref{cor:FisLipsch} implies that under our hypothesis, $\mathscr{S}\Pi\circ F$ is uniformly contractive with constant $7/8$ on the ball $B=\{\hat g: \Vert \hat{g}\Vert_A < \frac{\rho}{4}\}$, and takes this ball to itself.  It follows that $\hat{g}_k\in B$ for all $k$, and moreover 
\[
\Vert\hat{g}_{k}-\hat{g}_{k-1}\Vert_{A}=\Vert \mathscr{S}\Pi F(\hat{g}_{k-1})-\mathscr{S}\Pi F(\hat{g}_{k-2})\Vert<\frac{7}{8}\Vert\hat{g}_{k-1}-\hat{g}_{k-2}\Vert_{A},
\]
so that $\hat{g}_{k}$ converges in $\Vert\cdot\Vert_{A}$ to a fixed
point $\hat{g}$. Since $\hat{g}\in \mathscr{A}_{0}^{(A)}$, so does $g=\Sigma\hat{g}$. 

Note that if conditions \eqref{eq:contractivityConditions} are satisfied for some $\rho$, and $\Vert W\Vert_A < \rho'/12$ for some $\rho'< \rho$, then 
 conditions \eqref{eq:contractivityConditions} are again satisfied with $\rho'$ in place of $\rho$ (indeed, $\Vert \partial_j W\Vert_{(A+\rho')\otimes_\pi (A+\rho')}
 \leq \Vert \partial_j W\Vert_{(A+\rho)\otimes_\pi (A+\rho)}$).  Since by construction $\Vert g\Vert_A \leq \Vert \hat{g}\Vert_A \leq \frac{\rho}{4}$, 
 we conclude that in fact $\Vert g\Vert_A \leq \frac{\rho'}{4}$, for any $\rho'> 12 \Vert W\Vert_A$.  It follows that $\Vert g \Vert_A \leq 3 \Vert W\Vert_A$.
 
 This proves (i) and (ii). 

Since (assuming that $\beta\mapsto W_{\beta}$ is analytic) each iterate
$\hat{g}_{k}$ is clearly analytic in $\beta$ and the convergence
of the Banach-space valued function $\hat{g}_{k}(\beta)\in{\mathscr{A}}_{0}^{({A})}$ %
is uniform on any compact disk inside $|\beta|<\beta_{0}$, it follows
from the Cauchy integral formula that the limit is also an analytic
function. 

Part (iv) follows from the definition of $F$ and $\Sigma$.
\end{proof}
We leave the following to the reader (note that $\hat{g}$ is clearly
cyclically symmetric, and it is not hard to see that this implies
that also $g$ is cyclically symmetric):
\begin{prop}
Assume that $W=W^{*}$ and that $W$ satisfies the hypothesis of Proposition~\ref{prop:gExists}.
Let $g$ be the solution to (\ref{eq:Forg}) constructed in %
 Proposition \ref{prop:gExists}. Then $g$ belongs to the closure
of cyclically symmetric polynomials and also satisfies $g=g^{*}$.
\end{prop}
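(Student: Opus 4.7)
The plan is to prove both claims by induction on the iterates $\hat{g}_k$ constructed in the proof of Proposition~\ref{prop:gExists}, and then pass to the limit in $\Vert\cdot\Vert_A$. The key observation is that the linear operators $\mathscr{S}$, $\Pi$, and $\Sigma$ all commute with the adjoint $*$ and all preserve cyclic symmetry: a direct computation on monomials shows $\mathscr{S}(q^*) = (\mathscr{S}q)^*$, since cyclic symmetrization of $q^* = X_{i_p}\cdots X_{i_1}$ produces the adjoints of cyclic permutations of $q$, while $\Pi$ and $\Sigma$ act diagonally in the monomial basis and commute with cyclic rotation.

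The cyclic symmetry statement is then immediate: $\hat{g}_0 = W$ is cyclically symmetric by the hypothesis of Proposition~\ref{prop:gExists}, and every $\hat{g}_k = \mathscr{S}\Pi F(\hat{g}_{k-1})$ is cyclically symmetric because $\mathscr{S}$ takes values in cyclically symmetric polynomials. This set is closed in $\Vert\cdot\Vert_A$ (it is cut out by continuous linear conditions on coefficients), so the limit $\hat{g}$ lies in its closure, and $g = \Sigma\hat{g}$ inherits the property.

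For self-adjointness, I would show by induction that $\hat{g}_k = \hat{g}_k^*$ for all $k$, with base case the hypothesis $W = W^*$. Assuming $\hat{g}_{k-1}$ is self-adjoint, $\Sigma\hat{g}_{k-1}$ is too, and a check on monomials (reversing the decomposition $q = AX_jB$ to $q^* = B^*X_jA^*$) gives $\mathscr{D}_j((\Sigma\hat{g}_{k-1})^*) = (\mathscr{D}_j \Sigma\hat{g}_{k-1})^*$, so each $\mathscr{D}_j\Sigma\hat{g}_{k-1}$ is self-adjoint. In the formula for $F$ in Corollary~\ref{cor:FisLipsch}, the term $W(X+\mathscr{D}\Sigma\hat{g}_{k-1})$ is then self-adjoint since $W = W^*$ is evaluated at self-adjoint arguments, and $\tfrac{1}{2}\sum_i(\mathscr{D}_i\Sigma\hat{g}_{k-1})^2$ is manifestly a sum of squares.

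The only delicate piece is the term $(1\otimes\tau+\tau\otimes 1)Tr\log(1+\mathscr{J}\mathscr{D}\Sigma\hat{g}_{k-1})$, which requires tracking the two involutions $*$ and $\dagger$ on $\mathscr{A}\otimes\mathscr{A}^{op}$. Here I would apply Lemma~\ref{change}(iii) to $G = \tfrac{1}{2}\sum X_j^2+\Sigma\hat{g}_{k-1}$ and $Y = \mathscr{D}G$ to conclude that each entry of $\mathscr{J}Y$, and hence of $\mathscr{J}\mathscr{D}\Sigma\hat{g}_{k-1} = \mathscr{J}Y - I$, is $\dagger$-invariant. Since $\dagger$ is a conjugate-linear multiplicative involution on $\mathscr{A}\otimes\mathscr{A}^{op}$, this invariance passes to the entries of powers, hence to the entries of $\log(1+\mathscr{J}\mathscr{D}\Sigma\hat{g}_{k-1})$ (an absolutely convergent series under the smallness hypothesis) and then to its trace. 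A short computation, writing any $\dagger$-invariant element as $u = \sum_i(a_i\otimes b_i+b_i^*\otimes a_i^*)$ and using $\tau(x^*) = \overline{\tau(x)}$, shows $((1\otimes\tau)u)^* = (\tau\otimes 1)u$, so $(1\otimes\tau+\tau\otimes 1)u$ is self-adjoint. Therefore $F(\hat{g}_{k-1})$ is self-adjoint, and since $\mathscr{S}\Pi$ commutes with $*$, so is $\hat{g}_k$. Continuity of $*$ in $\Vert\cdot\Vert_A$ yields $\hat{g} = \hat{g}^*$ in the limit, and finally $g = \Sigma\hat{g} = g^*$. The one step requiring genuine care is this last one, distinguishing $*$ from $\dagger$ throughout the trace-log term.
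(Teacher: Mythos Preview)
Your proof is correct and is precisely what the paper intends: the paper leaves this proposition to the reader with only the hint that $\hat{g}$ is cyclically symmetric (being in the image of $\mathscr{S}$), and your induction on the iterates $\hat{g}_k$ together with the careful $\dagger$-invariance argument for the trace-log term fills in exactly the details the authors omit. There is no alternative approach in the paper to compare against.
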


\subsection{The map $f=\mathscr{D}g$ satisfies Equation (\ref{eq:Main-1}).}

From now on, we will assume that $\tau$ is the semicircle law.
\begin{thm}
\label{thm:fExists}Let  $A>A'>4$. Then there exists %
a constant $C(A,A')>0$ depending only on $A$, $A'$ so
that whenever $W=W^{*}\in\mathscr{A}^{(A)}$ satisfies $\Vert W\Vert_{A+1}<C(A,A')$, %
there exists an $f\in(\mathscr{A}^{(A')})^{n}$ which satisfies (\ref{eq:Main-1}).
In addition, $f=\mathscr{D}g$ for some $g\in\mathscr{A}^{(A')}$.
The solution $f=f_{W}$ satisfies $\Vert f_{W}\Vert_{A'}\to0$ as
$\Vert W\Vert_{A}\to0$. Moreover, if $W_{\beta}$ is a family which 
is analytic in $\beta$ then also the solutions $f_{W_{\beta}}$ are
analytic in $\beta$. \end{thm}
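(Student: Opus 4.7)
The plan is to apply Proposition~\ref{prop:gExists} at an intermediate radius and then invoke Lemma~\ref{lem:IfWeCanSolveFreeMongeAmpere} to deduce that $f=\mathscr{D}g$ solves~(\ref{eq:Main-1}). Since~(\ref{eq:Main-1}) depends on $W$ only through $\mathscr{D}W$ and $\mathscr{D}(\mathscr{S}W)=\mathscr{D}W$, I first replace $W$ by its cyclic symmetrization $\mathscr{S}W$: this preserves self-adjointness, satisfies $\Vert\mathscr{S}W\Vert_{A+1}\le\Vert W\Vert_{A+1}$, and makes $W$ cyclically symmetric, so the hypotheses for Proposition~\ref{prop:gExists} become available.

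Choose auxiliary parameters $A''\in(A',A)$ and $\rho\in(0,1]$ with $B:=A''+\rho<A+1$ strictly. I need to verify the contractivity conditions~\eqref{eq:contractivityConditions} at radius $A''$. The condition $A''>4$ is automatic from $A'>4$, and $\Vert W\Vert_{A''}\le\Vert W\Vert_{A+1}<\rho/12$ holds once $C(A,A')<\rho/12$. The delicate third condition $\sum_j\Vert\partial_j W\Vert_{B\otimes_\pi B}<1/8$ follows from a Cauchy-type estimate. For a monomial $q$ of degree $k$, at most $k$ decompositions $q=AX_jB$ exist, each giving a summand of bidegree $k-1$, so $\Vert\partial_j q\Vert_{B\otimes_\pi B}\le k\,B^{k-1}$. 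The ratio $\tfrac{kB^{k-1}}{(A+1)^{k}}=\tfrac{k}{A+1}\bigl(\tfrac{B}{A+1}\bigr)^{k-1}$ is bounded uniformly in $k$ by a constant $K(A,\rho)$ because $B/(A+1)<1$, yielding
\[
\Vert\partial_j W\Vert_{B\otimes_\pi B}\le K(A,\rho)\,\Vert W\Vert_{A+1}.
\]
Taking $C(A,A')$ small enough forces all three conditions of~\eqref{eq:contractivityConditions}.

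Proposition~\ref{prop:gExists} then produces a cyclically symmetric, self-adjoint $g\in\mathscr{A}_{0}^{(A'')}\subset\mathscr{A}_{0}^{(A')}$ with $\Vert g\Vert_{A''}\le 3\Vert W\Vert_{A''}$, satisfying~(\ref{eq:Forg}) and depending analytically on any parameter entering $W$ analytically. Set $f=\mathscr{D}g\in(\mathscr{A}^{(A')})^{n}$. To apply Lemma~\ref{lem:IfWeCanSolveFreeMongeAmpere} I also need $\Vert\mathscr{J}f\Vert_{A''\otimes_\pi A''}<1$, so that $\xi\mapsto\xi+(\mathscr{J}f)\#\xi$ is invertible; a second Cauchy estimate bounds $\Vert\mathscr{J}\mathscr{D}g\Vert_{A''\otimes_\pi A''}$ in terms of $\Vert g\Vert_{A''+\varepsilon}$ for small $\varepsilon>0$ with $A''+\varepsilon\le A$, and this is small when $\Vert W\Vert_{A+1}$ is small. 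Lemma~\ref{lem:IfWeCanSolveFreeMongeAmpere} then yields that $f$ solves~(\ref{eq:Main-1}). The bound $\Vert f_W\Vert_{A'}\le\Vert g\Vert_{A''}\le 3\Vert W\Vert_{A+1}$ yields the claimed continuity at $W=0$, and analyticity of $\beta\mapsto f_{W_\beta}$ follows from Proposition~\ref{prop:gExists}(iii) together with the boundedness of $\mathscr{D}\colon\mathscr{A}^{(A'')}\to(\mathscr{A}^{(A')})^{n}$.

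\textbf{Main obstacle.} The one genuinely technical step is the bookkeeping of radii $A'<A''<A''+\rho<A+1$ together with the choice of $C(A,A')$ so that a single smallness assumption on $\Vert W\Vert_{A+1}$ simultaneously produces (a) the contractivity hypotheses~\eqref{eq:contractivityConditions} at radius $A''$ via the Cauchy estimate on $\partial_j W$, and (b) the invertibility hypothesis $\Vert\mathscr{J}f\Vert_{A''\otimes_\pi A''}<1$ needed by Lemma~\ref{lem:IfWeCanSolveFreeMongeAmpere}. Everything else is a direct assembly of the results already proved in this section.
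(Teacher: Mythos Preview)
Your approach is essentially the paper's: use Cauchy-type estimates to convert the smallness hypothesis on $\Vert W\Vert_{A+1}$ into the hypotheses of Proposition~\ref{prop:gExists} at an intermediate radius, obtain $g$, and then pass via Lemma~\ref{lem:IfWeCanSolveFreeMongeAmpere} to equation~(\ref{eq:Main-1}). Your explicit replacement of $W$ by $\mathscr{S}W$ to ensure cyclic symmetry is a detail the paper leaves implicit.

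One substantive step is glossed over. The equation~(\ref{eq:Forg}) produced by Proposition~\ref{prop:gExists} is \emph{not} literally the equation appearing in Lemma~\ref{lem:IfWeCanSolveFreeMongeAmpere}; the latter has $\mathscr{D}$ applied throughout and the term $\mathscr{J}\mathscr{D}g\#\mathscr{D}g$ in place of $\mathscr{D}(\tfrac{1}{2}\mathscr{D}g\#\mathscr{D}g)$. To bridge this you must apply $\mathscr{D}$ to both sides of~(\ref{eq:Forg}), use $\mathscr{D}\mathscr{S}\Pi h=\mathscr{D}h$, and verify the identity $\mathscr{D}(\tfrac{1}{2}p\#p)=(\mathscr{J}p)\#p$ for $p=\mathscr{D}g$. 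The paper carries out this computation explicitly, and the last identity relies on $\sigma(\mathscr{J}p)_{ji}=(\mathscr{J}p)_{ij}$ from~\eqref{eq:signaVSadj}; it is not entirely routine.

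A minor bookkeeping slip: your Cauchy estimate for $\mathscr{J}f$ should drop the radius (you control $\Vert g\Vert_{A''}$, not $\Vert g\Vert_{A''+\varepsilon}$), so bound $\Vert\mathscr{J}f\Vert_{A'\otimes_\pi A'}$ in terms of $\Vert g\Vert_{A''}$ rather than working at $A''$. Lemma~\ref{lem:IfWeCanSolveFreeMongeAmpere} only needs the bound at some radius where everything converges, so $A'$ is enough.
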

\begin{proof}
Let $A_1$ be so that $4<A_1 < A$, and let  $$c(A_1,A) = \frac{1} {  e A_1 \log(A/A_1)}  = \sup_{\alpha\geq 1} \alpha {A_1^{-1}}  (A/A_1)^{-\alpha}.$$
Then if $g=\sum \lambda(q) q$, we obtain the estimate
\begin{eqnarray*}
\Vert\mathscr{D}g\Vert_{A_{1}} &\leq& \sum |\lambda(q)| \mbox{deg}(q) A_1^{\textrm{deg}(q)-1}\\
 &\leq& \sum |\lambda(q)| \mbox{deg}(q)A^{\textrm{deg}(q)}  {A_1^{-1}} (A/A_1)^{-\textrm{deg}(q)}  \\
 &\leq& \sum |\lambda(q)|A^{\textrm{deg}(q)} c(A,A_1) \leq  c(A_1,A) \|g\|_{A}
\end{eqnarray*}
Similarly,  if we set $c'(A_1,A) = \sup_{\alpha\geq 1}  \alpha^2 {A_1^{-2}}  (A/A_1)^{-\alpha} < \infty$, then 
\begin{eqnarray} \label{bbJ}
\Vert \mathscr{J} f\Vert_{A_{1}\otimes_{\pi}A_{1}} &=& \max_i \sum_j \Vert \partial_j \mathscr{D}_i g\Vert_{A_{1}\otimes_{\pi}A_{1}} \\ \nonumber
&\leq& \sum |\lambda(q)| (\textrm{deg}(q))^{2} A_1^{\textrm{deg}(q)-2}\nonumber\\ \nonumber
&\leq&  c'( A_1,A)  \Vert g\Vert_{A}.
\end{eqnarray}
Finally, by the same computation as in the proof of Lemma~\ref{eq:contractivityConditions},
\[
\sum_i \Vert \partial_i W\Vert_{ (A_{1}+1) \otimes_{\pi} (A_{1}+1)}\leq c(A_1+1,A+1) \Vert W\Vert_{A+1}.
\]

Thus we can choose $C(A,A')>0$ so that if $\Vert W\Vert_{A+1} < C(A,A')$, then the hypothesis of Proposition~\ref{prop:gExists} is satisfied (with $\rho=1$), and there exists some $g$ which satisfies
\begin{multline}
-\mathscr{S}\Pi\mathscr{N}g+\mathscr{S}\Pi(1\otimes\tau+\tau\otimes1)(Tr(\mathscr{J}\mathscr{D}g))  \\
=\mathscr{S}\Pi W(X)+\mathscr{S}\Pi\left\{ W(X+\mathscr{D}g)-W(X)+Q(g) \label{eqn:Main-3}%
+\frac{1}{2}\mathscr{D}g\#\mathscr{D}g\right\} .
\end{multline}
Furthermore, we may assume (by choosing a perhaps smaller $C(A,A')$) that $f=\mathscr{D}g$ satisfies $\Vert \mathscr{J}f\Vert_{A'\otimes_\pi A'} < 1$.  
To deduce that $g$ satisfies (\ref{eq:Main-1}), we shall apply $\mathscr{D}$ on both sides of the above equality.
To this end, 
note that if $p = (p_1,\dots,p_n)$ with $p_i$ monomials, then 
\begin{eqnarray*}
(\mathscr{D} (p \# p))_i &=& \mathscr{D}_i (\sum_j p_j  p_j)  \\
&=& \sum_j \left[ \sum_{p_j p_j 
= p_j AX_i B} B p_j  A + \sum_{p_j p_j =AX_i B p_j } B p_j A \right]
\\ &=& 2 \sum_j \sum_{p_j = A X_i B} B p_j A
\\ &=& 2 \sum_j \sigma (\mathscr{J}p)_{ji} \# p_j  \qquad \textrm{(here $\sigma(a\otimes b)=b\otimes a$)}
\end{eqnarray*}
Now if we assume that $p=\mathscr{D}w$ for some $w$, then by \eqref{eq:signaVSadj} in the proof of (iii) in Lemma~\ref{change}, 
we find that
$$(\mathscr{D} (p \# p))_i = 2 \sum_j  \sigma (\mathscr{J}p)_{ji} \# p_j  =2 \sum_j  (\mathscr{J}p)_{ij} \# p_j   =  2(\mathscr{J}p) \# p.$$

Using this identity, applying $\mathscr{D}$ to both sides of \eqref{eqn:Main-3} and noting that for any $h$,
$\mathscr{D}\mathscr{S}\Pi h=\mathscr{D}h$,   gives us
\begin{eqnarray*}
\mathscr{D}\left\{ (1\otimes\tau+\tau\otimes1)\left(Tr\left[\mathscr{J}\mathscr{D}g\right]\right)-\mathscr{N}g\right\} %
 & = & \mathscr{D}(W(X+\mathscr{D}g))+\mathscr{D}Q(g)+\mathscr{J}\mathscr{D}g\# \mathscr{D}g,
\end{eqnarray*}
which, according to Lemma \ref{lem:IfWeCanSolveFreeMongeAmpere} is
equivalent to (\ref{eq:Main-1}). 
\end{proof}
Since $Y_{W}=X+f_{W}\rightarrow X$  as $\Vert W\Vert_A \to0$, it
follows that for sufficiently small $\Vert W\Vert_{A}$, $\Vert Y\Vert$ is bounded by $A$ and so the law of
$Y$ is the unique solution to the Schwinger-Dyson equation with self-adjoint potential
$V=\frac{1}{2}\sum X_{j}^{2}+W$, and thus the law of $Y$ is exactly
$\tau_{V}$ (by \cite[Theorem 2.1]{guionnet-edouard:combRM}  or alternatively %
since, for $\Vert W\Vert_{A}$ sufficiently small, $V$
is then $(c,A)$ convex and the Schwinger-Dyson equation has a unique
solution \cite{alice-shlyakhtenko-freeDiffusions}). In particular,
$W^{*}(Y_{1},\dots,Y_{n})\cong W^{*}(\tau_{V})$. We thus obtain a
free analog of Brenier's theorem:
\begin{thm}  %
\label{thm:BrennierType}Let $A>A'>4$, and let $X_{1},\dots,X_{n}\in(M,\tau)$
be semicircular variables. Then there exists a universal constant
$C=C(A,A')>0$ so that whenever $W\in\mathscr{A}^{(A+1)}$ satisfies
$\Vert W\Vert_{A+1}<C$, there is an element $G\in\mathscr{A}^{(A')}$
so that
\[
(Y_{1},\dots,Y_{n})=(\mathscr{D}_{1}G,\dots,\mathscr{D}_{n}G)\in\mathscr{A}^{(A')}
\]
has law $\tau_{V}$, $V=\frac{1}{2}\sum X_{j}^{2}+W$. 

Moreover, the Hessian $\mathscr{J}\mathscr{D}G$ is a strictly positive
element of $M_{n\times n}(M\bar{\otimes}M^{op})$.

In particular, there are trace-preserving injections $C^{*}(\tau_{V})\subset C_{\operatorname{red}}^{*}(\mathbb{F}_{n})$
and $W^{*}(\tau_{V})\subset L(\mathbb{F}_{n})$.

If the map $\beta\mapsto W_{\beta}$ is analytic, then $Y_{1},\dots,Y_{n}$
are also analytic in $\beta$. Furthermore, $\Vert Y_{j}-X_{j}\Vert_{A'}$ vanishes as  $\Vert W\Vert_{A+1}$ goes to zero.\end{thm}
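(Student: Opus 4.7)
The plan is to assemble the data produced by Theorem \ref{thm:fExists} and the accompanying Proposition \ref{prop:gExists} into the statements of the Brenier-type theorem. The construction itself is already done; what remains is (a) to identify the law of $Y=X+f$ with $\tau_V$, (b) to deduce strict positivity of the Hessian $\mathscr{J}\mathscr{D}G$, and (c) to produce the $C^*$- and $W^*$-inclusions, together with the analyticity and continuity assertions. I expect (a), the uniqueness step for the Schwinger--Dyson equation, to be the main conceptual hinge; (b) and (c) are short consequences of the already established estimates.

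First I would invoke Theorem \ref{thm:fExists} (together with the proposition following \ref{prop:gExists} that provides cyclic symmetry and self-adjointness) to produce, for $C(A,A')$ small enough, a self-adjoint cyclically symmetric $g\in\mathscr{A}^{(A')}$ with $\|g\|_{A'}\to 0$ as $\|W\|_{A+1}\to 0$ and such that $f=\mathscr{D}g$ solves equation (\ref{eq:Main-1}). I then set $G=\tfrac{1}{2}\sum_j X_j^2+g$ and $Y_j=\mathscr{D}_jG=X_j+f_j$. Self-adjointness of $g$ gives $Y_j=Y_j^*$; and since $f_j\in\mathscr{A}^{(A')}$ is a power series evaluated on the semicircular variables (of operator norm $2<A'$), each $Y_j$ defines a bounded element of $M=W^*(X_1,\dots,X_n)\cong L(\mathbb{F}_n)$.

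For (a), I would exploit the chain of equivalences already established. By the corollary preceding Lemma \ref{lemma:AfterSharp}, equation (\ref{eq:Main-1}) for $f=\mathscr{D}g$ is equivalent to the Schwinger--Dyson equation (\ref{eq:mainForY}) for $Y$, provided $1+\mathscr{J}f$ is invertible; the estimate (\ref{bbJ}) shows $\|\mathscr{J}f\|_{A'\otimes_\pi A'}<1$ once $C(A,A')$ is taken small enough, securing invertibility. For $\|W\|_{A+1}$ in this regime, $V$ is moreover $(c,A)$-convex, so the uniqueness result \cite[Theorem 2.1]{guionnet-edouard:combRM} (alternatively \cite{alice-shlyakhtenko-freeDiffusions}) makes $\tau_V$ the unique bounded tracial solution of the Schwinger--Dyson equation with potential $V$. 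Hence the law of $Y$ equals $\tau_V$.

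For (b), write $\mathscr{J}\mathscr{D}G=I+\mathscr{J}f$. Lemma \ref{change}(iii), applied to $G=\tfrac{1}{2}\sum X_j^2+g$ with $g=g^*$, shows that $\mathscr{J}f$ is self-adjoint in $M_n(M\bar\otimes M^{op})$; since the operator norm there is dominated by $\|\cdot\|_{A'\otimes_\pi A'}$ (the semicircular generators having norm $2<A'$), the bound (\ref{bbJ}) forces $\|\mathscr{J}f\|<1$, whence $\mathscr{J}\mathscr{D}G\ge(1-\|\mathscr{J}f\|)I>0$. For (c), having realized bounded self-adjoint $Y_1,\dots,Y_n\in L(\mathbb{F}_n)$ with joint law $\tau_V$, the universal property of $W^*(\tau_V)$ yields a trace-preserving $*$-homomorphism $W^*(\tau_V)\to L(\mathbb{F}_n)$ sending the abstract $Y_j$ to the concrete one; faithfulness of the trace turns it into an injection, and restriction to $C^*(\tau_V)$ gives the $C^*$-inclusion. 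Analyticity in $\beta$ and the convergence $\|Y_j-X_j\|_{A'}=\|f_j\|_{A'}\to 0$ as $\|W\|_{A+1}\to 0$ are direct transcriptions of the corresponding conclusions of Theorem \ref{thm:fExists}.
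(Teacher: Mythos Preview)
Your proposal is correct and follows essentially the same route as the paper: both invoke Theorem~\ref{thm:fExists} to produce $g$, set $G=\tfrac{1}{2}\sum X_j^2+g$, identify the law of $Y=X+\mathscr{D}g$ with $\tau_V$ via uniqueness of the Schwinger--Dyson solution (citing \cite{guionnet-edouard:combRM} or \cite{alice-shlyakhtenko-freeDiffusions}), and obtain strict positivity of $\mathscr{J}\mathscr{D}G=1+\mathscr{J}f$ from the bound $\|\mathscr{J}f\|_{A'\otimes_\pi A'}<1$. Your write-up is slightly more explicit about self-adjointness of $\mathscr{J}f$ (via Lemma~\ref{change}(iii)) and about why the inclusions are trace-preserving injections, but these are details the paper leaves implicit rather than genuine differences in approach.
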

\begin{proof}
We use Theorem \ref{thm:fExists} and set $G=\frac{1}{2}\sum X_{j}^{2}+g$
and the discussion before the statement of the present theorem. The
only thing left to prove is that the Hessian of $G$ is strictly positive.
But the Hessian of $G$ is given by
\[
\mathscr{J}\mathscr{D}G=1+\mathscr{J}\mathscr{D}g.
\]
In the proof of Theorem \ref{thm:fExists}, we have chosen $C$ in such a way that $\Vert \mathscr{J}\mathscr{D}g\Vert_{A'\otimes_\pi A'}<1$, which means that $\mathscr{J}\mathscr{D}G$ is strictly positive.
\end{proof}

\section{Some applications.\label{sec:Applications.}}

\subsection{Analyticity of the solution to the Schwinger-Dyson equation.}

Under the hypothesis of Theorem \ref{thm:BrennierType}, we deduce
that if $\beta\to W_{\beta}$ is an analytic family of potentials
and $V_{\beta}=\frac{1}{2}\sum X_{j}^{2}+W_{\beta}$, then there exists
an analytic family of elements $Y_{\beta}$ whose law $\tau_{\beta}$
satisfies the Schwinger-Dyson equation for $V_{\beta}$ . We deduce
the following corollary, which was already proved in \cite{alice-shlyakhtenko-freeDiffusions},
by a different method:
\begin{cor}
Let $P\in\mathscr{A}$ be a fixed polynomial. Then $\tau_{\beta}(P)$
is analytic in $\beta$ in a neighborhood of the origin. %
\end{cor}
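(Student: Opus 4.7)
The plan is to deduce the corollary directly from Theorem~\ref{thm:BrennierType} and its analytic dependence statement. Write $V_\beta=\tfrac{1}{2}\sum X_j^2+W_\beta$ and assume $W_\beta$ is analytic in $\beta$ in the sense that $\beta\mapsto W_\beta\in\mathscr{A}^{(A+1)}$ is analytic as a Banach-space-valued function. For $\beta$ in a small enough disk around $0$, $\Vert W_\beta\Vert_{A+1}$ stays below the threshold $C(A,A')$ from Theorem~\ref{thm:BrennierType}, so we obtain an $n$-tuple $Y_\beta=X+f_{W_\beta}\in(\mathscr{A}^{(A')})^n$ whose joint law is exactly $\tau_\beta$, and, crucially, $\beta\mapsto f_{W_\beta}\in(\mathscr{A}^{(A')})^n$ is analytic.

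First I would observe that for any non-commutative polynomial $P\in\mathscr{A}$, the evaluation map
\[
(\mathscr{A}^{(A')})^n\ni Y\longmapsto P(Y)\in\mathscr{A}^{(A')}
\]
is a polynomial (hence analytic) map between Banach spaces: indeed, composition with a non-commutative polynomial is a finite linear combination of finite products, and products are jointly continuous (in fact jointly analytic, being bilinear and bounded) in $\Vert\cdot\Vert_{A'}$, since $\mathscr{A}^{(A')}$ is a Banach algebra under this norm. Next, the semicircular trace $\tau$ extends continuously to $\mathscr{A}^{(A')}$: on monomials $|\tau(q)|\le C_0^{\deg q}$ with $C_0=2<A'$, so $|\tau(R)|\le\Vert R\Vert_{A'}\cdot\sup_k (C_0/A')^k<\infty$ for $R\in\mathscr{A}^{(A')}$, and $\tau$ is therefore a bounded linear functional on $\mathscr{A}^{(A')}$.

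Combining these two facts,
\[
\beta\ \longmapsto\ \tau_\beta(P)\ =\ \tau\bigl(P(Y_\beta)\bigr)\ =\ \tau\bigl(P(X+f_{W_\beta})\bigr)
\]
is the composition of the analytic map $\beta\mapsto f_{W_\beta}$ with the polynomial map $f\mapsto P(X+f)$ and the bounded linear functional $\tau$, all between complex Banach spaces. Composition of analytic maps between Banach spaces is analytic, so $\beta\mapsto\tau_\beta(P)$ is analytic in a neighborhood of the origin.

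The only step that requires attention is the verification that the analytic dependence of $f_{W_\beta}$ on $\beta$ produced by Theorem~\ref{thm:BrennierType} is genuine Banach-space analyticity (not just analytic coefficient-by-coefficient); this is exactly what is guaranteed there via the Cauchy integral argument applied to the fixed-point iteration in Proposition~\ref{prop:gExists}, where uniform convergence on compact subdisks of the domain of $\beta$ in the norm $\Vert\cdot\Vert_A$ is established. There is no further obstacle.
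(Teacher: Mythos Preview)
Your proof is correct and follows exactly the route the paper indicates: the paper does not give a detailed argument but simply remarks that the corollary follows from the analytic dependence of $Y_\beta$ on $\beta$ in Theorem~\ref{thm:BrennierType}, and you have carefully supplied the missing details (polynomial evaluation is analytic on the Banach algebra $\mathscr{A}^{(A')}$, $\tau$ is a bounded linear functional there, and compositions of analytic maps are analytic). Your final paragraph correctly identifies and resolves the only subtle point.
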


\subsection{Isomorphism results.}

Applying the implicit function theorem, we can improve Theorem \ref{thm:BrennierType}
at the expense of possibly choosing a smaller bound on $W$. Indeed,
we let $(Y_{1},\dots,Y_{n})=f=\mathscr{D}g$ be as in Theorem \ref{thm:BrennierType},
we can always assume that $Y_{j}(0,\dots,0)=0$ by replacing each
$Y_{j}$ with $Y_{j}-Y_{j}(0,\dots,0)$. Thus we may assume that $f(0)=0$. 
\begin{thm}  %
\label{thm:isomorphism}Let $A>A'>4$, and let $X_{1},\dots,X_{n}\in(M,\tau)$
be semicircular variables. Then there exists a universal constant
$C=C(A,A')>0$ so that whenever $W\in\mathscr{A}^{(A+1)}$ satisfies
$\Vert W\Vert_{A+1}<C$, there is an element $G\in\mathscr{A}^{(A')}$
so that:
\begin{enumerate}
\item If we set $Y_{j}=\mathscr{D}_{j}G$, then $Y_{1},\dots,Y_{n}\in\mathscr{A}^{(A')}$
has law $\tau_{V}$, with $V=\frac{1}{2}\sum X_{j}^{2}+W$;
\item $X_{j}=H_{j}(Y_{1},\dots,Y_{n})$ for some $H\in\mathscr{A}^{(A')}$;
\item the Hessian $\mathscr{J}\mathscr{D}G$ %
 is a strictly positive element
of $M_{n\times n}(M\bar{\otimes}M^{op})$. 
\end{enumerate}
In particular, there are trace-preserving isomorphisms
\[
C^{*}(\tau_{V})\cong C^{*}(X_{1},\dots,X_{n}),\qquad W^{*}(\tau_{V})\cong L(\mathbb{F}_{n}).
\]

\end{thm}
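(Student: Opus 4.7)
Items (1) and (3) are already contained in Theorem \ref{thm:BrennierType}, so the essential new content is item (2), from which the isomorphism statement follows. The plan is to apply the inversion result for non-commutative power series, Corollary \ref{corimplicit}, to the equation $Y = X + f(X)$ where $f = \mathscr{D}g$ is the transport produced by Theorem \ref{thm:fExists}.

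First I would invoke Theorem \ref{thm:BrennierType} to produce $G = \frac{1}{2}\sum X_j^2 + g$, $f = \mathscr{D}g$, and $Y_j = X_j + f_j$ with law $\tau_V$ and with $1 + \mathscr{J}f$ strictly positive. Proposition \ref{prop:gExists} gives the bound $\|g\|_A \le 3\|W\|_A$, and the derivative estimates used in the proof of Theorem \ref{thm:fExists} let us control $\|f\|_B$ for any $A' < B < A$ by $\|W\|_{A+1}$ with a loss that is uniform in $W$. Hence by shrinking $C(A,A')$ we may assume $\|f\|_B$ is arbitrarily small. As indicated in the remark preceding the theorem, I would then normalize so that $f(0) = 0$ by replacing each $Y_j$ with $Y_j - Y_j(0,\ldots,0)$; this corresponds to replacing $W$ by a translate that is still small in $\|\cdot\|_{A+1}$, since the shifts are of order $\|W\|$.

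The main step is then to apply Corollary \ref{corimplicit} to $Y = X + f(X)$, with some radius $B$ strictly between $A'$ and $A$ playing the role of the corollary's outer radius and $A'$ playing the role of its inner one. The smallness hypothesis on $\|f\|_B$ in that corollary becomes, via the bounds above, one further smallness requirement on $C(A,A')$; shrinking $C(A,A')$ accordingly, the corollary produces $H \in (\mathscr{A}^{(A')})^n$ with $X_j = H_j(Y_1,\ldots,Y_n)$, which is exactly item (2).

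Once (1) and (2) are in hand, each $Y_j$ is an absolutely convergent power series in $X_1,\ldots,X_n$ and vice versa, so $W^*(X_1,\ldots,X_n) = W^*(Y_1,\ldots,Y_n)$ and $C^*(X_1,\ldots,X_n) = C^*(Y_1,\ldots,Y_n)$ as subalgebras of the ambient von Neumann algebra. Since $X_1,\ldots,X_n$ are free semicirculars, $W^*(X) \cong L(\mathbb{F}_n)$ and $C^*(X) \cong C^*_{\mathrm{red}}(\mathbb{F}_n)$; combined with the law-preserving identification $W^*(Y) \cong W^*(\tau_V)$, these give the stated isomorphisms. The main technical obstacle I anticipate is the bookkeeping of the four radii $A+1, A, B, A'$ and of the several smallness constants, ensuring that a single choice of $C(A,A')$ simultaneously satisfies the hypotheses of Theorem \ref{thm:BrennierType} and of Corollary \ref{corimplicit}; no new analytic idea is needed beyond the tools already established.
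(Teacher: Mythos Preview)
Your proposal is correct and follows essentially the same approach as the paper: fix an intermediate radius $B\in(A',A)$, use Theorem~\ref{thm:BrennierType} to write $Y=X+f(X)$ with $\Vert f\Vert_B$ small, and then apply Corollary~\ref{corimplicit} to invert and obtain $H\in(\mathscr{A}^{(A')})^n$ with $X=H(Y)$. One minor remark: the normalization $f(0)=0$ that you describe is not actually needed to invoke Corollary~\ref{corimplicit} (that corollary has no hypothesis of this type), and your aside that this ``corresponds to replacing $W$ by a translate'' is not quite right---subtracting constants from the $Y_j$ is a shift of the target law, not a reparametrization of $W$---but since the step is unnecessary this does not affect the argument.
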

\begin{proof} Fix $B\in (A',A)$.  By Theorem
\ref{thm:BrennierType}, we can write 
$$Y= X+  f(X)$$
with $\beta=\|f\|_{B}$  going to zero  as $C=\Vert W\Vert_{A+1}$ goes to zero. 
Moreover, as $X$ are semicircular variables, their norms are bounded 
by $2$ and hence for $C$ (and thus $\beta$) small enough, $Y$ is bounded by $A'$ so that
 we can apply Corollary \ref{corimplicit} with $A'<B$ to find a function $H\in (\mathcal A^{(A')})^n$ so that
$X=H(Y)$. 

\end{proof}

\subsubsection{Isomorphism class of algebras with analytic conjugate variables that
are close to the generators.}
\begin{cor}
Let $X_{1},\dots,X_{n}\in(M,\tau)$ be generators of a von Neumann
algebra $M$ and assume that the Fisher information $\Phi^{*}(X_{1},\dots,X_{n})$
is finite. Assume moreover that $\xi_j^* = \xi_{j}=\partial_{j}^{*}(X_{j})$ %
belongs to $\mathscr{A}^{(A+1)}$ for some $A>4$. 

Then there exists a universal constant $C=C(A)$ so that if $\Vert\xi_{j}-X_{j}\Vert_{A+1}<C$,
then $M\cong L(\mathbb{F}_{n})$ and also $C^{*}(X_{1},\dots,X_{n})\cong C^{*}(S_{1},\dots,S_{n})$
where $S_{1},\dots,S_{n}$ are free semicircular variables.\end{cor}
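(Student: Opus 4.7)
The strategy is to check the hypotheses of Theorem~\ref{thm:isomorphism} with $Z_j = X_j$, which then delivers the desired isomorphisms $W^*(X)\cong L(\mathbb{F}_n)$ and $C^*(X)\cong C^*(S_1,\dots,S_n)$. Setting $\eta_j := \xi_j - X_j \in \mathscr{A}^{(A+1)}$, the hypothesis $\|\xi_j - X_j\|_{A+1}<C$ becomes $\|\eta\|_{A+1}<C$, so the only remaining point is to produce a self-adjoint $F\in\mathscr{A}^{(A+1)}$, with $\|F\|_{A+1}$ controlled by $\|\eta\|_{A+1}$, such that $\mathscr{D}F = \xi = \mathscr{J}^*(I)$ (equivalently, $F = \tfrac{1}{2}\sum X_j^2 + W$ with $\mathscr{D}_j W = \eta_j$).

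The first step is to show that $\eta = (\eta_1,\dots,\eta_n)$ is a cyclic gradient, i.e.\ $\eta = \mathscr{D}W$ for some $W \in \mathscr{A}^{(A+1)}$. The essential input is that, since $\xi_j = \partial_j^*(1\otimes 1)$ are the conjugate variables of $X$, the Jacobian $\mathscr{J}\xi$ satisfies the cyclic symmetry $(\mathscr{J}\xi)_{ij}=\sigma((\mathscr{J}\xi)_{ji})$ with $\sigma(a\otimes b)=b\otimes a$ -- a property that is verified by expanding $\xi_j$ as a power series in the $X_k$'s and using the defining pairing $\tau(\xi_j P) = \tau\otimes\tau(\partial_j P)$ against arbitrary monomials (a form of Voiculescu's cyclomorphy, cf.~\cite{dvv:cyclomorphy}). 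Since $\mathscr{J}X$ is the identity matrix (trivially $\sigma$-symmetric), the same symmetry holds for $\mathscr{J}\eta$.

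Given this symmetry, an explicit cyclic antiderivative is furnished by $W := \mathscr{S}\Sigma\bigl(\sum_j \eta_j X_j\bigr)$. This $W$ is cyclically symmetric, lies in $\mathscr{A}^{(A+1)}$ with $\|W\|_{A+1}\le (A+1)\|\eta\|_{A+1}$, and is self-adjoint (since $\eta_j^* = \eta_j$ and $X_j^*=X_j$, combined with the cyclic symmetry placing $W$ in the closure of cyclically symmetric self-adjoint polynomials). The identity $\mathscr{D}_j W = \eta_j$ is a direct computation using $\mathscr{D}\circ\mathscr{S} = \mathscr{D}$, the identity $\mathscr{S}h = \Sigma(\mathscr{D}h\#X)$, and the $\sigma$-symmetry of $\mathscr{J}\eta$ (analogous to the verification in Lemma~\ref{change}(iii) and equation~\eqref{eq:signaVSadj} of the paper).

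Finally, set $F = \tfrac{1}{2}\sum X_j^2 + W$, so that $F=F^*$ and $\mathscr{D}F = X + \eta = \xi = \mathscr{J}^*(I)$. Choosing $C$ sufficiently small (depending on $A$) ensures $\|W\|_{A+1}$ falls within the range required by Theorem~\ref{thm:isomorphism}, whose application then completes the proof: one obtains that the law of $X$ agrees with $\tau_V$ for $V = \tfrac{1}{2}\sum X_j^2 + W$ (by uniqueness of the Schwinger-Dyson solution at small $W$), and the theorem produces $Y\in W^*(S_1,\dots,S_n)$ with this same law, yielding $W^*(X)\cong L(\mathbb{F}_n)$ and $C^*(X)\cong C^*(S_1,\dots,S_n)$. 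The main obstacle is the cyclic-symmetry assertion $\partial_i\xi_j = \sigma(\partial_j\xi_i)$ for analytic conjugate variables, which is the free analog of closedness of the $1$-form $\xi\cdot dX$ and is the step that promotes the $L^2$-information carried by $\xi$ into an actual scalar potential $W$; once this is in hand, everything else is norm bookkeeping.
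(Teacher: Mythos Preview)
Your approach is essentially the same as the paper's: construct a potential $V=\tfrac{1}{2}\sum X_j^2+W$ with $\mathscr{D}_jV=\xi_j$, observe that the law of $X$ is then $\tau_V$ by uniqueness of the Schwinger--Dyson solution for small $W$, and invoke Theorem~\ref{thm:isomorphism}. The paper does this in one line, writing $V=\tfrac{1}{2}\Sigma\bigl(\sum_j X_j\xi_j+\xi_j X_j\bigr)$ and citing Voiculescu's note on cyclic gradients \cite{voiculescu:cyclicGradients} for the identity $\xi_j=\mathscr{D}_jV$, whereas you unpack this step by asserting the $\sigma$-symmetry $(\mathscr{J}\xi)_{ij}=\sigma((\mathscr{J}\xi)_{ji})$ and then integrating via $W=\mathscr{S}\Sigma(\sum_j\eta_jX_j)$. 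Two small remarks: your norm bound on $W$ is off by a harmless factor of $n$, and the verification that conjugate variables satisfy the $\sigma$-symmetry is precisely the content of the Voiculescu reference rather than something one reads off directly from the pairing, so your sketch of that step is really a restatement of the citation rather than an independent argument.
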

\begin{proof}
Letting $V=\frac{1}{2}\Sigma\left(\sum_{j=1}^{n}X_{j}\xi_{j}+\xi_j X_j \right)$, one gets
that $\xi_{j}=\mathscr{D}_{j}V$ %
(cf. \cite{voiculescu:cyclicGradients}).
Thus we can write $V=\frac{1}{2}\sum X_{j}^{2}+W$; then for small
enough $\Vert\xi_{j}-X_{j}\Vert_{A+1}$, $W=W^*$ satisfies the hypothesis %
of Theorem \ref{thm:isomorphism}.
\end{proof}

\subsubsection{Voiculescu's conjecture with polynomial potentials.}

The following corollary is a partial answer to a conjecture of Voiculescu
\cite[p. 240]{voiculescu:conjectureAboutPotentials} (the full statement
of Voiculescu's conjecture is that the isomorphism should hold for
arbitrary polynomials $W$ and arbitrary $\beta$;
 however, even uniqueness
of a free Gibbs state is unknown in that generality!)
\begin{cor}
Let $A>4$ and let $W\in\mathscr{A}^{(A)}$ be a  self-adjoint %
power series. Let
$V_{\beta}=\sum X_{j}^{2}+\beta W$, and let $\tau_{\beta}$ be a
trace satisfying the Schwinger-Dyson equation with potential $V_{\beta}$.
Then there exists a $\beta_{0}>0$ so that $W^{*}(\tau_{\beta})\cong W^{*}(\tau_{0})\cong L(\mathbb{F}_{n})$
and $C^{*}(\tau_{\beta})\cong C^{*}(\tau_{0})=C^{*}(S_{1},\dots,S_{n})$
for all $-\beta_{0}<\beta<\beta_{0}$ (here $S_{1},\dots,S_{n}$ is
a free semicircular system).
\end{cor}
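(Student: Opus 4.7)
The plan is to derive this corollary as a direct application of Theorem~\ref{thm:isomorphism} to the rescaled perturbation $\beta W$. For $|\beta|$ sufficiently small, $\|\beta W\|_{A_0+1}=|\beta|\,\|W\|_{A_0+1}$ will fall below the threshold $C(A_0,A_0')$ supplied by that theorem; combined with self-adjointness of $\beta W$ (for real $\beta$, inherited from $W=W^*$), this produces the desired isomorphisms at the von Neumann and $C^*$ levels.

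Concretely, I would pick auxiliary radii $A_0, A_0'$ with $4<A_0'<A_0$ and $A_0+1\le A$ (possible once $A$ is slightly larger than $5$; here I am reading $V_\beta=\tfrac12\sum X_j^2+\beta W$, in keeping with the convention of Theorems~\ref{thm:BrennierType} and~\ref{thm:isomorphism}), and set $\beta_0=C(A_0,A_0')/\|W\|_{A_0+1}$ (with $\beta_0=+\infty$ if $W=0$). For $|\beta|<\beta_0$, Theorem~\ref{thm:isomorphism} applied to $\beta W$ yields $Y_1,\dots,Y_n,H_1,\dots,H_n\in\mathscr{A}^{(A_0')}$ such that $Y$ generates $X$ and vice versa, and such that the joint law of $Y$ satisfies the Schwinger--Dyson equation with potential $V_\beta$; in particular $C^*(\tau_\beta)\cong C^*(S_1,\dots,S_n)$ and $W^*(\tau_\beta)\cong L(\mathbb{F}_n)$.

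The last step is to verify that $\tau_\beta$ is unambiguously defined, i.e., that the Schwinger--Dyson equation for $V_\beta$ has a unique solution. As noted after the proof of Theorem~\ref{thm:fExists}, for $|\beta|$ small the potential $V_\beta$ is $(c,A_0)$-convex, and uniqueness of the associated free Gibbs state then follows from \cite{alice-shlyakhtenko-freeDiffusions}; after possibly shrinking $\beta_0$ this holds throughout $|\beta|<\beta_0$. For $\beta=0$, $V_0=\tfrac12\sum X_j^2$ is the quadratic potential whose unique free Gibbs state is exactly the free semicircle law, so $W^*(\tau_0)\cong L(\mathbb{F}_n)$ and $C^*(\tau_0)=C^*(S_1,\dots,S_n)$ by definition. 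No step presents a genuine obstacle here; the corollary is essentially a packaging of Theorem~\ref{thm:isomorphism} specialized to the one-parameter family $\beta\mapsto\beta W$, and the only real bookkeeping is keeping track of the radii hierarchy $A_0'<A_0<A_0+1\le A$ and ensuring the smallness constant $C(A_0,A_0')$ is translated into a smallness constant on $\beta$.
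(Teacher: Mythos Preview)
Your proposal is correct and matches the paper's approach: the corollary is stated in the paper without proof, precisely because it is an immediate specialization of Theorem~\ref{thm:isomorphism} to the family $\beta\mapsto\beta W$, exactly as you outline. Your bookkeeping on the radii and on uniqueness (via $(c,A)$-convexity for small $|\beta|$) is the right way to fill in the details; the caveat you flag about needing $A$ a bit larger than $5$ to fit the hierarchy $A_0+1\le A$ with $A_0>4$ is a genuine (minor) mismatch between the corollary's stated hypothesis $A>4$ and the hypotheses of Theorem~\ref{thm:isomorphism}, and is worth noting.
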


\subsubsection{$q$-deformed free group factors.}

In \cite{Speicher:q-comm}, Bozejko and Speicher have introduced a
family of von Neumann algebras $\Gamma_{q}(\mathbb{R}^{n})$, which
are ``$q$-deformations'' of free group factors. For $q=0$, $\Gamma_{q}(\mathbb{R}^{n})\cong L(\mathbb{F}_{n})$,
but the question of whether an isomorphism like this holds for $q\neq0$
remained open. Despite being much-studied (\cite{Speicher:q-comm,speicher:q-GaussianProcesses,nou:qNonInjective,sniady:q-RM-model,sniady:q-nonGamma,shlyakht:someEstimates,ricard:QFactoriality,shlyakht:lowerEstimates,kennedy-nica:exactQDeformed,dabrowski:SPDE,avsec:Q}
is an incomplete list of results about these factors), the question
of the isomorphism class of these algebras for all values of $q$
remains elusive. Nonetheless, we are able to settle it for small $q$
(depending on $n<\infty$):
\begin{cor}
Let $\Gamma_{q}(\mathbb{R}^{n})$ be the von Neumann algebra generated
by a $q$-semicircular system $S_{1}^{(q)},\dots,S_{n}^{(q)}$, $n\in\{2,3,\dots\}$.
Then there exists a $0<q_{0}=q_{0}(n)$ depending on $n$, so that
$\Gamma_{q}(\mathbb{R}^{n})\cong\Gamma_{0}(\mathbb{R}^{n})\cong L(\mathbb{F}_{n})$
and $C^{*}(S_{1}^{(q)},\dots,S_{n}^{(q)})=C^{*}(S_{1}^{(0)},\dots,S_{n}^{(0)})$
for all $|q|<q_{0}$.\end{cor}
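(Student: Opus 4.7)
The plan is to reduce the statement to the corollary on algebras with analytic conjugate variables close to the generators. For this, one needs to verify that for sufficiently small $|q|$, the $q$-semicircular system $S_1^{(q)},\ldots,S_n^{(q)}$ has finite Fisher information and, more importantly, conjugate variables $\xi_j^{(q)} = \partial_j^*(1\otimes 1)$ which lie in $\mathscr{A}^{(A+1)}$ for some $A>4$ and satisfy $\|\xi_j^{(q)} - S_j^{(q)}\|_{A+1}<C$, with $C=C(A)$ the constant furnished by that corollary.

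The first step is to recall (or re-derive) an explicit formula expressing $\xi_j^{(q)}$ as a non-commutative power series in $S_1^{(q)},\ldots,S_n^{(q)}$. On the $q$-Fock space, the conjugate variables can be written in terms of $q$-creation/annihilation operators, and by the $q$-Wick formula these operators themselves admit Wick-type expansions into polynomials in $S_1^{(q)},\ldots,S_n^{(q)}$. As already observed in the references \cite{shlyakht:someEstimates,dabrowski:SPDE}, this leads to an expansion of the form
\[
\xi_j^{(q)} = S_j^{(q)} + \sum_{k\geq 1} q^k\, P_{j,k}\bigl(S_1^{(q)},\ldots,S_n^{(q)}\bigr),
\]
where $P_{j,k}$ are (universal) non-commutative polynomials whose coefficients arise from the $q$-deformed combinatorics of pair partitions. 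Because the $q=0$ conjugate variable is exactly $S_j$, the whole correction is an $O(q)$ term.

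The second step is to estimate $\|P_{j,k}\|_{A+1}$. The number of monomials and the size of the coefficients in $P_{j,k}$ grow only at an exponential rate in $k$ (the combinatorial data being the set of pair partitions of a bounded size weighted by $q$-crossings), and the operator norm of $S_j^{(q)}$ is bounded by $2/\sqrt{1-|q|}$, hence close to $2$ for small $|q|$. Choosing $A$ slightly larger than $4$, one can therefore bound $\|P_{j,k}\|_{A+1} \leq D^k$ for some constant $D=D(A,n)$. Consequently the series for $\xi_j^{(q)}$ converges absolutely in $\|\cdot\|_{A+1}$ as soon as $|q|D<1$, and $\|\xi_j^{(q)}-S_j^{(q)}\|_{A+1}=O(|q|)\to 0$ as $q\to 0$.

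Once these two estimates are in place, the corollary on analytic conjugate variables close to the generators applies to $(S_1^{(q)},\ldots,S_n^{(q)})$ for all $|q|<q_0(n)$ with $q_0(n)$ chosen small enough that $|q|D<1$ and $\|\xi_j^{(q)}-S_j^{(q)}\|_{A+1}<C(A)$, yielding both $\Gamma_q(\mathbb{R}^n)\cong L(\mathbb{F}_n)$ and the $C^*$-algebra isomorphism. The main obstacle is step two: extracting from the $q$-Wick combinatorics a clean exponential-type bound on $\|P_{j,k}\|_{A+1}$, so that the power series representation of $\xi_j^{(q)}$ lives in the Banach algebra $\mathscr{A}^{(A+1)}$ (and not just in a weaker completion). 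All remaining ingredients are direct consequences of the theorems already proved in the body of the paper.
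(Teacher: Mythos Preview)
Your approach is exactly the one taken in the paper: reduce to the preceding corollary on algebras with analytic conjugate variables close to the generators, by verifying that the conjugate variables $\xi_j^{(q)}$ of the $q$-semicircular system lie in $\mathscr{A}^{(A+1)}$ for some $A>4$ and satisfy $\Vert \xi_j^{(q)} - S_j^{(q)}\Vert_{A+1}\to 0$ as $q\to 0$. The only difference is that the paper does not re-derive this fact but simply cites \cite[Theorem~34]{dabrowski:SPDE}, where the required power-series expansion of $\xi_j^{(q)}$ and the exponential bounds you describe are established; your ``step two'' is precisely the content of that theorem.
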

\begin{proof}
This follows from the fact \cite[Theorem 34]{dabrowski:SPDE} that the conjugate  %
variables $\xi_{j}=\xi_j^*=\mathscr{D}V$ to $(S_{1}^{(q)},\dots,S_{n}^{(q)})$ exist and %
that for some $A>5$, $\Vert\xi_{j}-X_{j}\Vert_{A}\to0$ as $q\to0$. %
\end{proof}

\subsubsection{Free entropy.}

Note that under the assumptions of Theorem \ref{thm:isomorphism},
we end up expressing $X_{j}$'s as a convergent power series in $n$
semicircular variables $S_{1},\dots,S_{n}$. By the change of variables
formula of Voiculescu \cite{dvv:entropy2}, we get:
\begin{cor}
\label{cor:ChangeOfEntropyFormula}Let $V=\frac{1}{2}\sum X_{j}^{2}+W$,
and assume the hypothesis of Theorem \ref{thm:isomorphism}. Let $\tau_{V}$
be the free Gibbs state with potential $V$. Then
\[
\chi(\tau_{V})=\chi(\tau)+\tau\otimes\tau [\operatorname{Tr}(\log(\mathscr{J}\mathscr{D}G))] %
\]
where $\tau$ is the semicircle law and $G$ is as in Theorem \ref{thm:isomorphism}.
\end{cor}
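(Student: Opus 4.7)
The plan is to derive this corollary as a direct application of Voiculescu's change of variables formula for free entropy \cite{dvv:entropy2}, using the transport map $G$ constructed in Theorem~\ref{thm:isomorphism} as the ``change of coordinates'' between the semicircular $n$-tuple $X$ and the $n$-tuple $Y$ of law $\tau_V$. The setup is precisely what Voiculescu's formula asks for: we have $Y_j = \mathscr{D}_j G \in \mathscr{A}^{(A')}$, the Jacobian $\mathscr{J}\mathscr{D}G = I + \mathscr{J}\mathscr{D}g$ is strictly positive in $M_{n\times n}(M \bar\otimes M^{\mathrm{op}})$ (by Theorem~\ref{thm:isomorphism}(3)), and by Theorem~\ref{thm:isomorphism}(2) the map $X \mapsto Y$ is globally invertible via a convergent power series $H \in (\mathscr{A}^{(A')})^n$. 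Thus $X$ and $Y$ generate the same von Neumann algebra, and Voiculescu's identity should read
\[
\chi(Y_1,\dots,Y_n) = \chi(X_1,\dots,X_n) + \tau\otimes\tau\bigl[\operatorname{Tr}\log\mathscr{J}\mathscr{D}G\bigr].
\]
Since $Y$ has law $\tau_V$ and $X$ has law $\tau$ (the semicircle law), this rearranges to the claimed formula.

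The first step is to observe that for \emph{polynomial} changes of variable with positive Jacobian, the identity above is Voiculescu's original formula (see \cite{dvv:entropy2}); so the only issue is that our $G$ lies in $\mathscr{A}^{(A')}$ rather than in $\mathscr{A}$. I would handle this by a direct truncation argument: write $G = \sum_{k\geq 0} G_k$ where $G_k$ is the homogeneous degree-$k$ component, set $G^{(N)} = \sum_{k\leq N} G_k$, and put $Y^{(N)}_j = \mathscr{D}_j G^{(N)}$. Then $\|G^{(N)} - G\|_{A'} \to 0$, and consequently $\|Y^{(N)} - Y\|_{A'} \to 0$ and (by the same estimate as in \eqref{bbJ} of Theorem~\ref{thm:fExists}) $\|\mathscr{J}\mathscr{D}G^{(N)} - \mathscr{J}\mathscr{D}G\|_{A''\otimes_\pi A''} \to 0$ for any $A'' < A'$. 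In particular, $\mathscr{J}\mathscr{D}G^{(N)}$ is strictly positive and bounded below for $N$ large.

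The second step is to apply Voiculescu's formula to each $Y^{(N)}$, yielding
\[
\chi(Y^{(N)}_1,\dots,Y^{(N)}_n) = \chi(\tau) + \tau\otimes\tau\bigl[\operatorname{Tr}\log\mathscr{J}\mathscr{D}G^{(N)}\bigr],
\]
and then pass to the limit $N\to\infty$. The right-hand side converges by analytic functional calculus on $M_{n\times n}(M\bar\otimes M^{\mathrm{op}})$ together with the norm convergence of $\mathscr{J}\mathscr{D}G^{(N)}$ and uniform strict positivity. For the left-hand side, I would use that the $Y^{(N)}$ are polynomial expressions in the semicircular tuple $X$ with uniform operator norm bound (since $\|Y^{(N)} - Y\|_{A'}\to 0$), so joint moments converge, and then apply continuity / lower semicontinuity of $\chi$ (Voiculescu) together with the Fisher information bound coming from strict convexity of $V$ (or from the fact that all $Y^{(N)}$ are non-commutative polynomials of the same semicircular family with invertible Jacobian, so their conjugate variables are controlled uniformly) to upgrade this to convergence of $\chi(Y^{(N)})\to\chi(Y) = \chi(\tau_V)$.

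The main obstacle is the last point: establishing continuity of $\chi$ along the approximating sequence $Y^{(N)}\to Y$. One would likely have to argue via Fisher information $\Phi^*$, since $\Phi^*$ behaves much better under such approximations (being lower semicontinuous and with a clean change-of-variables formula of its own), and then integrate up using the relation between $\chi$ and $\Phi^*$. Alternatively, since the transport is analytic in a parameter $\beta$ interpolating between $W=0$ and the target $W$, one can differentiate both sides in $\beta$ and check that the derivatives match (using the Schwinger--Dyson equation for $\tau_V$), thereby reducing the identity to the trivial case $\beta=0$; this approach sidesteps the approximation issue entirely and is likely the cleanest route.
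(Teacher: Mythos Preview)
Your approach is essentially the same as the paper's: both derive the corollary by directly invoking Voiculescu's change of variables formula \cite{dvv:entropy2} for the invertible transformation $X\mapsto Y=\mathscr{D}G$ supplied by Theorem~\ref{thm:isomorphism}. The paper's ``proof'' is in fact a single sentence preceding the corollary, noting that $X_j$ is a convergent power series in the semicircular variables and citing Voiculescu.

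The difference is that the paper takes the change of variables formula for invertible non-commutative \emph{power series} (not just polynomials) as given --- it does so again, without further comment, in the uniqueness theorem of Section~5. Your truncation argument and the subsequent worry about continuity of $\chi$ along $Y^{(N)}\to Y$ are therefore extra scaffolding the paper omits. If you wanted to justify this extension carefully, your alternative route via analyticity in $\beta$ (differentiating both sides and using Schwinger--Dyson) is indeed cleaner than the truncation-plus-continuity argument, whose last step (upgrading moment convergence to convergence of $\chi$) is genuinely delicate and not fully resolved in your sketch.
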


\subsection{Monotone transport for random matrix models.}

Let $V=\frac{1}{2}\sum X_{j}^{2}+W$ and $\tau_{V}$ be the free Gibbs
state with potential $V$.  Assume that $W$ satisfies the
hypothesis of Theorem \ref{thm:isomorphism} for some $A>A'>4$ and let $F=(F_1,\ldots, F_n)={\mathscr D} G$ be the map %
in $(\mathcal A^{(A')})^n$
constructed in that theorem.  Let us assume that $W$ is small enough so that $\Vert F\Vert_{A'} \leq A$.  
We let
\[
V(X_{1},\ldots,X_{n})=\frac{1}{2}\sum X_{j}^{2}+W(X_{1},\cdots,X_{n}).
\]
on $\{\max_i \|X_i\|\le A'\}$ and $ V=+\infty$ otherwise.  Note that $V$ is strictly convex if $\|W\|_{A+1}$ is small enough.

Let $\mu_{V}^{(N)}$ be the measure on $(M_{N\times N}^{sa})^{n}$
given by
\[
d\mu_{V}^{(N)}=\frac{1}{Z_{N}^{V}}\exp(-N{\rm Tr}(V(A_{1},\dots,A_{n})))dA_{1}\dots dA_{n}
\]
where we set $dA_{1}\dots dA_{n}=\textrm{Lebesgue measure}$ and $M_{N\times N}^{sa}$ the set of $N\times N$ self-adjoint matrices. Consider
also the Gaussian measure
\[
d\mu^{(N)}=\frac{1}{Z_{N}}\exp\left[-N{\rm Tr}\left(\frac{1}{2}\sum A_{j}^{2}\right)\right]dA_{1}\dots dA_{n}.
\]

We denote by $\chi:\mathbb{R}\to[-A',A']$
a smooth cutoff function so that $\chi(x)=x$ for $x\in[-4,4]$, $|\chi(x)|\le A'$
and $\chi$ has a uniformly bounded derivative. 
On the space of $n$-tuples of $N\times N$ self-adjoint matrices
$(M_{N\times N}^{sa})^{n}$ with norm bounded by $4$, consider
the functional calculus map
\[
\tilde{F}^{(N)}:(A_{1},\dots,A_{n})\mapsto(F_{j}(\chi(A_{1}),\dots,\chi(A_{n})))_{j=1}^{n}.
\]
Finally, let $F^{(N)}=\nabla\Psi^{(N)}$ be the unique monotone transport
map on $(M_{N\times N}^{sa})^{n}$  so that
\[
F^{(N)}_{*}\mu^{(N)}=\mu_{V}^{(N)}.
\]
 
\begin{thm}
\label{thrm:MatricialTransport}Let $A>A'>4$ as above.  Then there exists a constant $c$ (smaller than that of 
Theorem~\ref{thm:isomorphism}) so that for any $\|W\|_{A+1}<c$, the following statement hold:\\
(i) With the above notation, 
\[
\lim_{N\to\infty}\int\frac{1}{N}Tr\left(\left[F^{(N)}-\tilde{F}^{(N)}\right]^{2}\right)d\mu^{(N)}=0.
\]
In other words, as $N\to\infty$, the ``entrywise'' monotone transport
$F^{(N)}$ is well-approximated as $N\to\infty$ by the ``matricial''
functional calculus map $\tilde{F}^{(N)}$.\\
(ii) Assume in addition that $W$ is $(c,A)$-convex for some $c>0$.
Then $\mathscr{J}F\leq1$. Furthermore, $\frac{1}{N}Tr\left(\left[F^{(N)}-\tilde{F}^{(N)}\right]^{2}\right)$ vanishes 
almost surely as $N$ goes to infinity. %
\end{thm}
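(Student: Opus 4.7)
The plan is to combine the monotonicity of $\tilde F^{(N)}$, inherited from the positivity $\mathscr{J}\mathscr{D}G\geq 0$ established in Theorem~\ref{thm:isomorphism}, with an approximate transport property, and then invoke a Brenier-type stability argument to force $\tilde F^{(N)}$ to be close to the optimal map $F^{(N)}$ in $L^2(\mu^{(N)})$. Throughout, the cutoff $\chi$ has negligible effect because $\max_i\|A_i\|<4$ with probability exponentially close to $1$ under both $\mu^{(N)}$ and $\mu_V^{(N)}$ (standard random matrix estimates, using that $W$ is uniformly small and convex).

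First I would show that $\tilde F^{(N)}$ asymptotically transports $\mu^{(N)}$ to $\mu_V^{(N)}$. Under $\mu^{(N)}$, the tuple $A=(A_1,\dots,A_n)$ converges in $*$-moments and almost surely in operator norm to a free semicircular family $(S_1,\dots,S_n)$ of norm $\leq 2$. Since $F\in(\mathscr{A}^{(A')})^n$ is an absolutely convergent power series and, by Theorem~\ref{thm:isomorphism}, $F(S)$ has law $\tau_V$, we deduce for every non-commutative polynomial $P$,
\[
\mathbb{E}_{\mu^{(N)}}\!\Bigl[\tfrac{1}{N}\operatorname{Tr}\!\bigl(P(\tilde F^{(N)}(A))\bigr)\Bigr]\longrightarrow\tau_V(P)=\lim_N\mathbb{E}_{\mu_V^{(N)}}\!\Bigl[\tfrac{1}{N}\operatorname{Tr}\!\bigl(P(A)\bigr)\Bigr].
\]
Combined with uniform integrability from Gaussian concentration under $\mu^{(N)}$, this yields that $(\tilde F^{(N)})_*\mu^{(N)}$ and $\mu_V^{(N)}$ become asymptotically close in a normalized Wasserstein-type distance. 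Next, using the identity $\nabla_{A_i}\operatorname{Tr}(G(A))=(\mathscr{D}_iG)(A)$ valid for cyclically symmetric $G$, together with the fact that the Euclidean Hessian of $A\mapsto\operatorname{Tr}(G(A))$ is the tracial evaluation of $\mathscr{J}\mathscr{D}G$, the bound $\mathscr{J}\mathscr{D}G\geq 0$ from Theorem~\ref{thm:isomorphism} implies that $\tilde F^{(N)}$ equals the gradient of the convex function $\Phi^{(N)}(A):=\operatorname{Tr}(G(\chi(A)))$ on the spectral region where the cutoff is inactive. Thus both $F^{(N)}=\nabla\Psi^{(N)}$ and $\tilde F^{(N)}$ are (essentially) gradients of convex functions.

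For (i), the optimality of $F^{(N)}$ as the Brenier map and the near-transport property of $\tilde F^{(N)}$ give
\[
\tfrac{1}{N}\!\int\!\|F^{(N)}-\operatorname{id}\|_{HS}^{2}\,d\mu^{(N)}\leq\tfrac{1}{N}\!\int\!\|\tilde F^{(N)}-\operatorname{id}\|_{HS}^{2}\,d\mu^{(N)}+o(1),
\]
and a Brenier-type quadratic identity, using the joint convexity of $\Psi^{(N)}$ and $\Phi^{(N)}$, converts this into $\tfrac{1}{N}\int\operatorname{Tr}([F^{(N)}-\tilde F^{(N)}]^2)d\mu^{(N)}\to 0$. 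For (ii), when $W$ is $(c,A)$-convex, $\mu_V^{(N)}$ is more log-concave than Gaussian, so Caffarelli's contraction theorem gives $\|JF^{(N)}\|_{\mathrm{op}}\leq 1$ uniformly in $N$; passing to the $N\to\infty$ limit yields $\mathscr{J}F\leq 1$. The Bakry--Émery criterion furnishes a uniform log-Sobolev inequality for $\mu^{(N)}$; since $A\mapsto\tfrac{1}{N}\operatorname{Tr}([F^{(N)}-\tilde F^{(N)}]^2)(A)$ is Lipschitz with constant $O(N^{-1/2})$, Gaussian concentration combined with Borel--Cantelli upgrades the $L^1$-convergence of (i) to almost sure convergence.

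\textbf{Main obstacle.} The decisive step is the quantitative stability: turning Wasserstein closeness of the pushforwards $(\tilde F^{(N)})_*\mu^{(N)}\to\mu_V^{(N)}$ into $L^2$-closeness of the Brenier maps themselves, with errors that vanish uniformly in $N$. Uniform log-concavity of $\mu_V^{(N)}$ is essential to this step, and one must also control the small-probability region where the cutoff $\chi$ acts non-trivially; both issues are manageable using the standard tools above, but together they constitute the technical heart of the argument.
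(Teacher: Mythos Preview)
Your overall strategy---show that $\tilde F^{(N)}$ is asymptotically a monotone transport to $\mu_V^{(N)}$, then invoke stability of Brenier maps---is reasonable, and your treatment of part (ii) (Caffarelli for $\mathscr{J}F\leq 1$, Lipschitz bound $O(N^{-1/2})$ plus Gaussian concentration for a.s.\ convergence) is essentially what the paper does.  But in part (i) there is a real gap at exactly the point you flag as the ``main obstacle.''

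The inequality
\[
\tfrac{1}{N}\!\int\!\|F^{(N)}-\operatorname{id}\|_{HS}^{2}\,d\mu^{(N)}\leq\tfrac{1}{N}\!\int\!\|\tilde F^{(N)}-\operatorname{id}\|_{HS}^{2}\,d\mu^{(N)}+o(1)
\]
does \emph{not} by itself yield $\tfrac{1}{N}\int\operatorname{Tr}\bigl[(F^{(N)}-\tilde F^{(N)})^2\bigr]d\mu^{(N)}\to 0$.  Having two gradients of convex functions with nearly equal quadratic cost gives no $L^2$ closeness of the maps without an additional \emph{uniform} strict convexity input, and your ``Brenier-type quadratic identity'' is never specified.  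Quantitative stability of Brenier maps via Wasserstein closeness of the targets is a genuine theorem, but it requires exactly the sort of uniform log-concavity you would have to feed in, and you have not shown how to do so with constants uniform in $N$.

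The paper sidesteps this entirely by working with \emph{relative entropy} rather than quadratic cost.  The functional $h\mapsto H_{V^{(N)}}(h_*\mu^{(N)})$ is convex in $h$, and its second variation along $t\mapsto (1-t)F^{(N)}+t\tilde F^{(N)}$ is bounded below by $cN\,\mathbb{E}_{\mu^{(N)}}\!\bigl[\operatorname{Tr}((F^{(N)}-\tilde F^{(N)})^2)\bigr]$, because $\operatorname{Hess}V^{(N)}\geq cN\cdot\operatorname{Id}$ for $\|W\|_{A+1}$ small.  Meanwhile one shows $\alpha_N:=N^{-2}\bigl|H_{V^{(N)}}(\tilde F^{(N)}_*\mu^{(N)})-H_{V^{(N)}}(\mu_V^{(N)})\bigr|\to 0$ by combining the classical change-of-variables for $H$ with Voiculescu's free entropy change-of-variables (Corollary~\ref{cor:ChangeOfEntropyFormula}).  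A second-order Taylor estimate then gives
\[
\tfrac{c}{2}\varepsilon^2\,\mathbb{E}_{\mu^{(N)}}\!\Bigl[\tfrac{1}{N}\operatorname{Tr}\bigl((F^{(N)}-\tilde F^{(N)})^2\bigr)\Bigr]\leq \varepsilon\,\alpha_N,
\]
and choosing $\varepsilon=\sqrt{\alpha_N}$ finishes (i).  This entropy route is what supplies the quantitative, $N$-uniform strict convexity that your Wasserstein argument is missing.
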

\begin{proof}
Let us put, for a probability measure $\nu$ on $\mathbb{R}^{M}$
with density $p(x_{1},\dots,x_{M})dx_{1}\dots dx_{M}$, and $U:\mathbb{R}^{M}\to\mathbb{R}\cup \{+\infty\}$ %
a convex function, 
\begin{eqnarray*}
H(\nu) & = & \int p(x_{1},\dots,x_{M})\log p(x_{1},\dots,x_{M})dx_{1}\cdots dx_{M}\\
H_{U}(\nu) & = & \int p(x_{1},\dots,x_{M})\log p(x_{1},\dots,x_{M})dx_{1}\dots dx_{M}+\int Ud\nu.
\end{eqnarray*}

If $h$ is an invertible transformation with positive-definite Jacobian
$\operatorname{Jac}h$, then
\begin{eqnarray*}
H(h_{{*}}\nu) & = & H(\nu)-\int\log\det\operatorname{Jac}h\ d\nu\\ %
 & = & H(\nu)-\int Tr\log\operatorname{Jac}h\ d\nu.
\end{eqnarray*}
Both entropies $H_{U}(\nu)$ and $H(\nu)$ are convex functions of
the density of $\nu$. Moreover, $H_U(h_{*}\nu)$ and $H(h_{*}\nu)$ are convex in $h$ if $U$ is convex. %

It is well-known that $H_{U}(\nu)$ %
is minimized precisely by the
Gibbs measure with potential $U$: %
\[
H_{U}(\nu)=\inf_{\nu'}H_{U}(\nu')\iff d\nu(x_{1},\dots,x_{M})=\frac{1}{Z}\exp(-U(x_{1},\dots,x_{M}))dx_{1}\dots dx_{M}.
\]

Similarly, for $V\in\mathscr{A}$, let $\chi(\tau)$ be Voiculescu's
microstates free entropy, and put $\chi_{V}(\tau)=\chi(\tau)-\tau(V).$ 

Then using \cite{dvv:entropysurvey,alice-shlyakhtenko-freeDiffusions,alice:StFlour}
one has
\[
\chi_{V}(\tau)=\sup_{\tau'}\chi_{V}(\tau')\iff\tau\textrm{ is the free Gibbs law with potential }V.
\]

Moreover, if we set $V^{(N)}(A_{1},\dots,A_{n})=NTr(V(A_{1},\dots,A_{N}))$, %
then following \cite[proof of Theorem 5.1]{alice-shlyakhtenko-freeDiffusions}
or \cite[Section 3.3]{guionnet-edouard:combRM} we see that $\mu_{V}^{(N)}(\frac{1}{N}{\rm Tr}(P))$
converges towards $\tau_{V}(P)$ for all polynomials $P$, in particular
the limit does not depend on the cutoff   provided $\|W\|_{A+1}$  is  %
small enough. In fact,
as $\mu_{V}^{(N)}$ has a strictly log-concave density,  Brascamp-Lieb
inequalities allow one to show that the matrices under $\mu_{V}^{(N)}$
are bounded by $4$ with overwhelming probability. Moreover, by definition of the entropy, we find that
\begin{eqnarray*}
\chi_{V}(\tau_{V}) & = & \lim_{N\to\infty}(
\frac{1}{N^{2}}\log Z_{N}^{V}+\frac{n}{2}\log N )
\\
 & = & \lim_{N\to\infty}(\frac{n}{2}\log N-\frac{1}{N^{2}}H_{V^{(N)}}(\mu_{V}^{(N)}))
\end{eqnarray*}
Furthermore, we claim that, if $\tilde\mu_V^N=\tilde F^{(N)} _{*}\mu^{(N)}$,we have
\begin{equation}\label{thenet}
\lim_{N\to\infty}\frac{1}{N^{2}}H_{V^{(N)}}(\tilde{\mu}_{V}^{(N)})-\frac{n}{2}\log N=-\chi_{V}(\tau_{V}).
\end{equation}
To see this, we first note that
\[
\lim_{N\to\infty}\frac{1}{N^{2}}\int V^{(N)}d\tilde{\mu}_{V}^{(N)}=\tau_{V}(V).
\]
Moreover,
\[
\frac{1}{N^{2}}H(\tilde{\mu}_{V}^{(N)})=\frac{1}{N^{2}}H(\mu^{(N)})-\mathbb{E}_{\mu^{(N)}}\left[\frac{1}{N^{2}}Tr\log\operatorname{Jac}\tilde{F}^{(N)}\right].
\]
On the other hand (see e.g. \cite{alice:StFlour}), because of concentration
phenomena, as $\mu^{(N)}$ has a strictly log-concave density, if
we take $\tau$ to be the semicircle law, then
\[
\mathbb{E}_{\mu^{(N)}}\left[\frac{1}{N^{2}}Tr\log\operatorname{Jac}\tilde{F}^{(N)}\right]\to\tau\otimes\tau(Tr\log\mathscr{J}F).
\]
Thus
\[
\frac{1}{N^{2}}H(\tilde{\mu}_{V}^{(N)})-\frac{n}{2}\log N\to-\chi(\tau)-\tau\otimes\tau(Tr\log\mathscr{J}F)=-\chi(\tau_{V})
\]
so that using Corollary \ref{cor:ChangeOfEntropyFormula} gives \eqref{thenet}. As a consequence, we have proved that
$$\lim_{N\rightarrow\infty} \frac{1}{N^{2}}(H_{V^{(N)}}(\tilde{\mu}_{V}^{(N)})-H_{V^{(N)}}(\mu_{V}^{(N)}))=0\,.$$
By convexity of entropy as a function of the transport maps, for $\varepsilon\in[0,1]$, %
\begin{eqnarray*}
&&H_{V^{(N)}}\left(\left[(1-\varepsilon)F^{(N)}+\varepsilon\tilde{F}^{(N)}\right]_{{\#}}\mu^{(N)}\right)-H_{V^{(N)}}(\mu_{V}^{(N)})\\
&\leq& (1-\varepsilon)H_{V^{(N)}}(F^{(N)}_{*}\mu^{(N)})+\varepsilon H_{V^{(N)}}(\tilde{F}^{(N)}_{*}\mu^{(N)})-H_{V^{(N)}}(\mu_{V}^{(N)})\\
&=& \varepsilon(H_{V^{(N)}}(\tilde{\mu}_{V}^{(N)})-H_{V^{(N)}}(\mu_{V}^{(N)})).%
\end{eqnarray*}
On the other hand, let $\Delta^{(N)}=F^{(N)}-\tilde{F}^{(N)}$ and
set
\begin{eqnarray*}
D_{N} & = & \partial_{\varepsilon}\Big|_{\varepsilon=0}H_{V^{(N)}}\left(\left[F^{(N)}+\varepsilon\Delta^{(N)}\right]_{{{*}}}\mu^{(N)}\right).
\end{eqnarray*}
Since $H_{V^{(N)}}\left(\left[(1-\varepsilon)F^{(N)}+\varepsilon\tilde{F}^{(N)}\right]_{{{*}}}\mu^{(N)}\right)$
has an absolute minimum at $\varepsilon=0$, we deduce that $D_{N}\geq0$. 

Furthermore, assuming that $\|W\|_{A+1}$ is small enough so that on the set  $\{\max_{1\le i\le n}\|A_{i}\|_{\infty}\le A\}$
the Hessian $\operatorname{Hess}(V^{(N)})$ is bounded from below
by $cN$  %
times the identity operator for some $c>0$, and noting that by definition the image of $\tilde F^{(N)}$
is bounded by $A$ whereas the image of $F^{(N)}$ composed with $\chi$ is also bounded by $A$, %
\begin{eqnarray}
 &  & H_{V^{(N)}}\left(\left[(1-\varepsilon)F^{(N)}+\varepsilon\tilde{F}^{(N)}\right]_{{{*}}} %
 \mu^{(N)}\right)-\varepsilon D_{N}-H_{V^{(N)}}(\mu_{V}^{(N)})\nonumber \\
 & = & \int_{0}^{\varepsilon}(\varepsilon -t)\partial_{t}^{2}H_{V^{(N)}}\left(\left[F^{(N)}+t\Delta^{(N)}\right]_{{{*}}}%
 \mu^{(N)}\right)dt %
\nonumber \\
 & = & \mathbb{E}_{\mu^{(N)}}\int_{0}^{\varepsilon}t\Bigg\{ Tr\left[\left(\frac{\operatorname{Jac}(\Delta^{(N)})}{\operatorname{Jac}(tF^{(N)}+(1-t)\tilde{F}^{(N)})}\right)^{2}\right]\label{eq:born}\\
 &  & \qquad\qquad+Tr\Big(\operatorname{Hess}(V^{(N)}(F^{(N)}+t\Delta^{(N)}))(\Delta^{N})^{2}\Big)\Bigg\} dt\nonumber \\
 & \geq & \frac{\varepsilon^{2}}{2}cN\mathbb{E}_{\mu^{(N)}}(Tr((\Delta^{(N)})^{2}).\nonumber 
\end{eqnarray}
We thus get (recalling that $D_{N}\geq0$)
\begin{eqnarray*}
\frac{c}{2}N\varepsilon^{2}\mathbb{E}_{\mu^{(N)}%
}Tr((\Delta^{(N)})^{2}) & \leq & \varepsilon(H_{V^{(N)}}(\tilde{\mu}_{V}^{(N)})-H_{V^{(N)}}(\mu_{V}^{(N)}))-\varepsilon D_{N}\\
 & \leq & \varepsilon(H_{V^{(N)}}(\tilde{\mu}_{V}^{(N)})-H_{V^{(N)}}(\mu_{V}^{(N)})).
\end{eqnarray*}
Since
\[
\alpha_{N}=\frac{1}{N^{2}}\left|H_{V^{(N)}}(\tilde{\mu}_{V}^{(N)})-H_{V^{(N)}}(\mu_{V}^{(N)})\right|\to0
\]
we can now choose $\varepsilon=\sqrt{\alpha_{N}}\to0$ to conclude
that
\begin{equation}\label{conv} %
\frac{c}{2}\mathbb{E}_{\mu^{(N)}}\left[%
\frac{1}{N}Tr((\Delta^{(N)})^{2})\right]\leq\alpha_{N}^{1/2}
\end{equation}
which completes the proof of the first point. %

Let us now assume that $W$ is $(c,A)$-convex for some $c>0$. 

\noindent
Let
$W^{(N)}=N{\rm Tr}(W(A_{1},\dots,A_{n}))$ be defined on  matrices satisfying
$\max_{j}\Vert A_{j}\Vert_{\infty}\le A'$ and be infinite otherwise. Then $W^{(N)}$ is convex.
It then follows from Caffarelli's results \cite{caffarelli:regularity,cedric}
that the optimal transport map $F^{(N)}$ taking the Gaussian measure
$\mu^{(N)}$ to the measure with density $Z_{N}^{-1}\exp(-\frac{1}{2}N\sum Tr(A_{j}^{2})+W^{(N)}(A_{1},\dots,A_{n}))$
has Jacobian uniformly bounded by $1$.

The map $(A_{1},\dots,A_{n})\to\frac{1}{N}Tr((\Delta^{(N)}(A_{1},\dots,A_{n}))^2)$
can be viewed as the composition of the map $R:(A_{1},\dots,A_{n})\to\Delta^{(N)}(A_{1},\dots,A_{n})$
and the map $Q:(A_{1},\dots,A_{n})\to\frac{1}{N}\sum Tr(A_{j}^{2})$.
The Jacobian of $R$ is given (on $(A_1,\dots,A_n)$ with $\max_j \Vert A_j \Vert < A)$ by $\mathscr{J}\mathscr{D}g-\operatorname{Hess}\Psi^{(N)}$,
where $\Psi^{(N)}$ is such that $F^{(N)}=\nabla\Psi^{(N)}.$ If $\max_{j}\Vert A_{j}\Vert_{\infty}<A$,
this is bounded (as an operator on Hilbert spaces $M_{N\times N}^{n}\to M_{N\times N}^{n}$
endowed Hilbert Schmidt norms $\Vert A\Vert=\sum_{j}Tr(A_{j}^{*}A_{j})$)
because $g$ is a power series and because of Caffarelli's bound.
Hence the map $R$ is Lipschitz with a uniform Lipschitz constant
on the set where $\max_{j}\Vert A_{j}\Vert_{\infty}<A$. The map $Q$
is Lipschitz with Lipschitz constant of the form $C/\sqrt{N}$ (see
\cite[Lemma 6.2]{alice:StFlour}). 

Therefore, by concentration inequalities and \eqref{conv}, we deduce  %
\[
\lim_{N\to\infty}\frac{1}{N}Tr((\Delta^{(N)})^{2})=0\quad\mu^{(N)}\textrm{-a.s.}
\]

We next come back to (\ref{eq:born}) and observe that $\operatorname{Jac}(\tilde{F}^{(N)})$
is bounded above uniformly by some constant $M_0$ (as a small smooth perturbation of the identity).  Furthermore,  $\operatorname{Jac}(\tilde{F}^{(N)})$
is bounded above by $1$ by \cite{caffarelli:regularity}. %
Thus we get that for all  $\varepsilon<1/2$ and some constant $M=\max(M_0,1)$ independent of $\varepsilon$,
\[
\mu^{(N)}\left[\frac{1}{N^{2}}Tr\left\{ (\operatorname{Jac}(\Delta^{(N)}))^{2}\right\} \right]\le M \alpha_{N}/\varepsilon.
\]
Taking once again $\varepsilon=\sqrt{\alpha_{N}}\to0$, we obtain
that
\[
\mu^{(N)}\left[\frac{1}{N^{2}}Tr\left\{ \left(\operatorname{Jac}(F^{(N)})-\operatorname{Jac}(\tilde{F}^{(N)})\right)^{2}\right\} \right]\to0. %
\]

We now apply \cite{caffarelli:regularity} to conclude that 
\begin{equation}
\operatorname{Jac}F^{(N)}\leq1.\label{eq:boundsOnSpectrumJacobianAtN}
\end{equation}
Let $Y_{N}=\operatorname{Jac}(F^{(N)})$, $\tilde{Y}_{N}=\operatorname{Jac}(\tilde{F}^{(N)})$
be random variables taking values in the space $M_{n\times n}(\operatorname{End}(M_{N\times N}^{sa}))\cong M_{n\times n}(M_{N\times N}\otimes M_{N\times N})$
endowed with the normalized trace $\frac{1}{n N^{2}}Tr\otimes Tr\otimes Tr$.
Both $Y_{N}$ and $\tilde{Y}_{N}$ are bounded in operator norm, and
consequently define elements $Y$ and $\tilde{Y}$ in the ultraproduct
von Neumann algebra $\prod_{N}^{\omega}M_{n\times n}(M_{N\times N}\otimes M_{N\times N})$.
Since $\Vert Y_{N}-\tilde{Y}_{N}\Vert_{2}\to0$, $Y=\tilde{Y}$. 

Once again, because of concentration,
\[
\mu^{(N)}\left[\frac{1}{N^{2}}Tr\left\{ (\operatorname{Jac}(\tilde{F}^{(N)}))^{p}\right\} \right]\to\tau\otimes\tau Tr((\mathscr{J}F)^{p})
\]
so that the spectrum of $\mathscr{J}F$ is the same as that of $\tilde{Y}$
(and so the same as that of $Y$). 

By (\ref{eq:boundsOnSpectrumJacobianAtN}), the spectrum of $Y_{N}$
lies in the interval $[0,1]$. But this implies that also the spectrum
of the limiting operator $Y$ is contained in the same set. Thus $\mathscr{J}F\leq1$.
\end{proof}

\section{Open questions.}

We list some open questions that are raised by our results.
\begin{enumerate}
\item In the classical case, Brenier's theorem \cite{brennier:polarFact,cedric}
asserts much more than the statement of our main theorem: the classical
analog of the map $\mathscr{D}g$ gives \emph{optimal }transport from
$\tau$ to $\tau_{V}$ for quadratic Wasserstein distance.  %
It would be nice to understand if the same
holds true in the non-commutative case (see \cite{bine-voiculescu:WassersteinDist}
for the extension of the notion of the Wasserstein distance to non-commutative
random variables). Note that the map we construct is optimal in the
single-variable case $n=1$.
\item Brenier gave a heuristic derivation of his theorem through a very
general ``polar factorization'' theorem. Does a theorem like that
hold in the non-commutative case? There is an infinitesimal analog
of his decomposition (related to the classical Helmholtz decomposition
of vector fields) which has been extensvively studied by Voiculescu
in \cite{dvv:cyclomorphy}. Can non-commutative monotone transport
be also obtained in the same way? 
\item Does the positivity condition $\mathscr{J}\mathscr{D}G\in \{ F> 0 : F\in M_{n\times n}(\tau\otimes\tau^{\textrm{op}})\}$ on the ``Hessian''
of $G$ translate into any kind of convexity properties of $G$?
\item What happens to our map $F=\mathscr{D}G$ in the case that $V$ is
not strictly convex? It can be seen that in the absence of the bounds
on $W$ the isomorphism (or even the embedding $C^{*}(\tau_{V})\subset C^{*}(S_{1},\dots,S_{n})$)
fails to exist at least on $C^{*}$-level. This is due to the fact
that for certain non-convex polynomials, solutions to the Schwinger-Dyson
equations may lead to $C^{*}$-algebras with non-trivial projections.
But the free semicircular system generates a projectionless $C^{*}$-algebra.
Thus failure of convexity of $V=\frac{1}{2}\sum X_{j}^{2}+W$ must
be ``visible'' as a defect of regularity of the transport map $F$.
\item Is the monotone transport map unique? More precisely, let $X_{1},\dots,X_{n}$
be a semicircular family, and assume that $Y=(Y_{1},\dots,Y_{n})$
and $Y'=(Y_{1}',\dots,Y_{n}')$ both belong to the $L^{2}$-closure
of $\{\mathscr{D}g(X_{1},\dots,X_{n}):\mathscr{J\mathscr{D}}g\geq0\}$.
If the law of $Y$ is the same as the law of $Y'$ , is $Y=Y'$?
\end{enumerate}
We can prove a uniqueness statement for our monotone transport if
we assume more on the transport map and the ``target'' $n$-tuple
$Y_{1},\dots,Y_{n}$:
\begin{thm}
Let $X_{1},\dots,X_{n}$ be a semicircular family, and assume that
$Y=(Y_{1},\dots,Y_{n})$ and $Y'=(Y_{1}',\dots,Y_{n}')$ both belong
to the $L^{2}$-closure of $\{\mathscr{D}g:\mathscr{J\mathscr{D}}g\geq0\}$.
Assume moreover that $Y$ and $Y'$ are both invertible non-commutative
power series in $X_{1},\dots,X_{n}$. If $Y$ and $Y'$ have as their
laws the same free Gibbs law $\tau_{V}$ with 
$V=\frac{1}{2}\sum X_i^2 +W$, $W$ small enough,  then $Y=Y'$. \end{thm}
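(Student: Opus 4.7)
The plan is to show that any $Y$ satisfying the hypotheses must coincide with the monotone transport constructed in Theorem \ref{thm:fExists}; applying this both to $Y$ and $Y'$ then forces $Y = Y'$. Given $Y = \mathscr{D}G$ expressed as an invertible power series in $X$, I write $Y = X + f$ with $f$ a power series, and moreover $f = \mathscr{D}g$ for some $g$ that (by the argument just before the statement of the theorem, and using self-adjointness and cyclic symmetry of $G$) can be taken to be cyclically symmetric and self-adjoint. Invertibility of $Y$ as a non-commutative power series, together with Corollary \ref{corimplicit}, provides control on the size of $f$ in a suitable $\Vert\cdot\Vert_A$ norm and ensures that $1+\mathscr{J}f$ is invertible in $M_{n\times n}(\mathscr{A}^{(A)}\otimes_\pi \mathscr{A}^{(A),\mathrm{op}})$.

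Next, I would exploit the fact that the law of $Y$ is $\tau_V$. This gives the Schwinger-Dyson equation $\partial^*_{Y_j}(1\otimes 1) = Y_j + \mathscr{D}_{Y_j}(W(Y))$. Translating via the change-of-variables formula (Lemma \ref{change}) and using the semicircle identity $\mathscr{J}^*(I) = X$, the Schwinger-Dyson equation for $Y$ becomes equation (\ref{eq:Main-1}) for $f$. Since $f = \mathscr{D}g$ with $g$ cyclically symmetric and self-adjoint, Lemma \ref{lem:IfWeCanSolveFreeMongeAmpere} is applicable and rewrites (\ref{eq:Main-1}) in the form derived in Proposition \ref{prop:gExists}. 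Applying $\Sigma\mathscr{S}$ (legitimate on both sides because of cyclic symmetry) converts it into the fixed-point equation
\[
\hat g = \mathscr{S}\Pi F(\hat g),
\]
with $\hat g = \mathscr{N} g$ and $F$ the map defined in Corollary \ref{cor:FisLipsch}. The identical derivation applied to $Y'$ produces $\hat g' = \mathscr{S}\Pi F(\hat g')$.

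By Proposition \ref{prop:gExists}, for $W$ sufficiently small (so that conditions (\ref{eq:contractivityConditions}) hold with some small $\rho$), the map $\mathscr{S}\Pi F$ is a strict contraction on the ball $B_\rho = \{\hat h \in \mathscr{A}_0^{(A)} : \Vert\hat h\Vert_A < \rho/4\}$ and has a unique fixed point there. So the conclusion $Y = Y'$ reduces to showing $\hat g, \hat g' \in B_\rho$, and this a priori estimate is what I view as the main obstacle. The plan is to run a bootstrap using the fixed-point equation itself: the bound in Corollary \ref{cor:FisLipsch} gives $\Vert \hat g\Vert_A \leq \Vert\mathscr{S}\Pi F(\hat g)\Vert_A \leq \Vert W\Vert_A + \text{(quadratic in } \Vert\hat g\Vert_A\text{)}$, which for $\Vert W\Vert_{A+1}$ small enough separates $\Vert \hat g\Vert_A$ into a small regime (inside $B_\rho$) and a large regime. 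The invertibility of $Y$ as a non-commutative power series — which, by Corollary \ref{corimplicit}, requires $\Vert f\Vert_A$ to be bounded by a quantity depending only on the norms and the constant there — excludes the large regime provided $W$ is taken sufficiently small. Once $\hat g$ and $\hat g'$ are both confined to $B_\rho$, the uniqueness of the fixed point of the contraction $\mathscr{S}\Pi F$ on $B_\rho$ gives $\hat g = \hat g'$, hence $g = \Sigma \hat g = \Sigma\hat g' = g'$, and therefore $Y = X + \mathscr{D}g = X + \mathscr{D}g' = Y'$.
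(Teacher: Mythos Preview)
Your approach differs substantially from the paper's and contains a genuine gap.

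The paper does not revisit the fixed-point equation at all. It applies Voiculescu's change-of-variables formula $\chi(Y)=\chi(X)+\tau\otimes\tau\bigl(Tr\log\mathscr{J}Y\bigr)$, available because $Y$ is an invertible power series in $X$. Since $Y$ and $Y'$ share the law $\tau_V$ they have the same free entropy, and since $\tau_V$ is the unique maximizer of $\chi_V(\cdot)=\chi(\cdot)-\tau(V(\cdot))$, both $Y$ and $Y'$ attain the maximum of the functional $Z\mapsto\tau\otimes\tau\bigl(Tr\log\mathscr{J}Z\bigr)-\tau(V(Z))$ over the closure of $\{\mathscr{D}g:\mathscr{J}\mathscr{D}g\ge0\}$. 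This functional is strictly concave in $Z$ on that set (the logarithm is strictly concave and $V$ is convex for small $W$), so its maximizer is unique and $Y=Y'$. No norm control on $g$ is needed, and the argument never assumes that $Y$ itself is a cyclic gradient.

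Your route, by contrast, needs both of these ingredients, and the decisive gap is in the second. You invoke Corollary~\ref{corimplicit} to ``exclude the large regime'' in the bootstrap, but that corollary states that \emph{if} $\Vert f\Vert_B$ is small \emph{then} $X\mapsto X+f(X)$ is invertible; it is a sufficient condition, not a necessary one. Invertibility of $Y$ as a non-commutative power series therefore gives you no a priori bound on $\Vert f\Vert_A$ (or on $\Vert\hat g\Vert_A$), so nothing confines $\hat g$ to the ball $B_\rho$ on which $\mathscr{S}\Pi F$ is contractive, and the uniqueness-of-fixed-point step does not apply. A secondary issue is that you assume $Y=\mathscr{D}G$ for some power series $G$; the hypotheses only place $Y$ in the $L^2$-closure of cyclic gradients and separately assert that $Y$ is a power series, and you have not supplied the argument needed to combine these into $Y=\mathscr{D}G$.
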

\begin{proof}
The assumption that $Y,Y'$ are invertible non-commutative power series
allow us to apply Voiculescu's change of variable formula for free
entropy \cite{dvv:entropy2}:
\begin{eqnarray*}
\chi(Y) & = & \chi(X)+\tau\otimes\tau(Tr(\log\mathscr{J}Y))\\
\chi(Y') & = & \chi(X)+\tau\otimes\tau(Tr(\log\mathscr{J}Y')).
\end{eqnarray*}
Since $Y$ and $Y'$ have the same law, they have the same free entropy
(which is finite). Thus
we conclude that
\[
\tau\otimes\tau(Tr(\log\mathscr{J}Y))-\tau(V(Y))=\tau\otimes\tau(Tr(\log\mathscr{J}Y'))-\tau(V(Y')).
\]
Let
\[
\psi(Y)=\chi(Y)-\chi(X)-\tau(V(Y)). %
\]
Then $\psi$ is maximal iff $ $$Y$ has the law $\tau_{V}$ (see
e.g. \cite{dvv:entropysurvey}, we sketch the argument for completeness:
if one replaces $Y$ by $Y+\varepsilon P(Y)$ for some polynomials
$P_{1},\dots,P_{n}$, then
\[  %
\psi(Y+\epsilon P(Y))=\psi(Y)+\epsilon\left\{ \tau\otimes\tau(Tr(\mathscr{J}YP))-\tau(\mathscr{D}V(Y)P)\right\} +O(\epsilon^{2}),
\]
and so any maximizer to $\Psi$ satisfies the Schwinger-Dyson equation
and  has the same law as $Y$.)

It follows that
\[
\tau\otimes\tau(Tr(\log\mathscr{J}Y))-\tau(V(Y))=\chi(Y)-\chi(X) - \tau(V(Y))=\max_{Z}\chi(Z)-\chi(X)-\tau(V(Z)),
\]
so a fortiori

\[
\tau\otimes\tau(Tr(\log\mathscr{J}Y))-\tau(V(Y))=\sup_{Y\in\{\mathscr{D}g:\mathscr{J}\mathscr{D}g\geq0\}}\tau\otimes\tau(Tr(\log\mathscr{J}Y))-\tau(V(Y)).
\]
But since $\tau\otimes\tau(Tr(\log\mathscr{J}Y))-\tau(V(Y))$ is strictly
concave in $Y$ for $\mathscr{J}Y$ in the positive cone of $M_{n\times n}(M\bar{\otimes}M^{op})$
(here $M=W^{*}(X_{1},\dots,X_{n})$), it follows that there is at
most one $n$-tuple $Y$ in the closure of $Y\in\{\mathscr{D}g:\mathscr{J}\mathscr{D}g\geq0\}$
which gives this maximal value. Thus $Y=Y'$.
\end{proof}
\bibliographystyle{amsalpha}
\providecommand{\bysame}{\leavevmode\hbox to3em{\hrulefill}\thinspace}
\providecommand{\MR}{\relax\ifhmode\unskip\space\fi MR }
\providecommand{\MRhref}[2]{%
  \href{http://www.ams.org/mathscinet-getitem?mr=#1}{#2}
}
\providecommand{\href}[2]{#2}

\end{document}